\newtheorem{theorem}{Theorem}[section]
\newtheorem{lemma}[theorem]{Lemma}
\newtheorem{proposition}[theorem]{Proposition}
\theoremstyle{definition}
\theoremstyle{remark}
\newtheorem{remark}[theorem]{Remark}
\numberwithin{equation}{section}
\newcommand{\eps}{\varepsilon}
\newcommand{\R}{\mathbb R}
\newcommand{\N}{\mathbb N}
\newcommand{\cB}{\mathcal B}
\newcommand{\cP}{\mathcal P}
\newcommand{\bbf}{\mathbf{b}}
\newcommand{\bn}{\mathbf{n}}
\newcommand{\Lis}{\cL\mathrm{is}}
\newcommand{\bb}{{\bf \underline{b}}}
\newcommand{\cc}{\underline{c}}
\newcommand{\dd}{\underline{d}}
\DeclareMathOperator*{\argmin}{argmin}
\DeclareMathOperator{\clos}{clos}
\DeclareMathOperator{\Span}{span}
\DeclareMathOperator{\diam}{diam}
\DeclareMathOperator{\divv}{div}
\newcommand{\U}{\mathbb{U}}
\newcommand{\V}{\mathbb{V}}
\newcommand{\cL}{\mathcal{L}}
\newcommand{\cR}{\mathcal{R}}
\newcommand{\cT}{\mathcal{T}}
\newcommand{\bx}{\mathbf{x}}
\newcommand{\bs}{\mathbf{s}}
\newcommand{\by}{\mathbf{y}}
\newcommand{\be}{\begin{equation}}
\newcommand{\ee}{\end{equation}}
\newcommand{\TT}{\mathbb{T}}
\newenvironment{Array}{
 \everymath{\displaystyle\everymath{}}
 
 \array
 }
{\endarray }
\title{On the stability of DPG formulations of transport equations}\thanks{The first author has been supported by the Netherlands Organization for Scientific Research (NWO) under contract. no. 613.001.109, the second author has been supported in
part by the  DFG SFB-Transregio 40, by the DFG Research Group 1779, and
the Excellence Initiative of the German Federal and State Governments}
\date{\today}
\author{D. Broersen, W. Dahmen, R.P. Stevenson}
\address{
Korteweg-de Vries Institute for Mathematics,
University of Amsterdam,
P.O. Box 94248,
1090 GE Amsterdam, The Netherlands}
\address{Institut f\"ur Geometrie und Praktische Mathematik, RWTH Aachen, Germany}
\email{dirkbroersen@gmail.com, dahmen@igpm.rwth-aachen.de, r.p.stevenson@uva.nl}
\subjclass[2010]{
65N12, 
65N30, 
35A15, 
35F05
}
\keywords{Discontinuous Petrov Galerkin-formulation of transport equations, optimal and near-optimal test spaces, stability}
\begin{document}

\begin{abstract} 
In this paper we formulate and analyze a Discontinuous Petrov Galerkin formulation of linear transport equations with
variable convection fields. We show that a corresponding {\em infinite dimensional} mesh-dependent variational formulation, in which besides the principal field also its trace on the mesh skeleton is an unknown, is uniformly stable with respect to the mesh, where the test space is a certain product space over the underlying domain partition.

Our main result states then the following.  For piecewise polynomial trial spaces of degree $m$, we show under mild assumptions on the convection field that piecewise polynomial test spaces of degree $m+1$ over a  refinement of the primal partition with uniformly bounded refinement depth give rise to uniformly (with respect to the mesh size)
stable Petrov-Galerkin discretizations. The  partitions are required to be shape regular but need not be quasi-uniform.
An important startup ingredient is  that for a constant convection field  one can identify  the exact optimal test functions with respect to 
a suitably modified but uniformly equivalent broken test space norm as piecewise polynomials. These test functions are then varied
towards simpler and stably computable near-optimal test functions for which the above result is derived via a perturbation analysis.
We conclude indicating some consequences of the results that will be treated in forthcoming work.

\end{abstract}

\maketitle

\section{Introduction}  \label{S1}
There has been a recent vibrant development of the so called {\em Discontinuous Petrov Galerkin} (DPG) method, initiated and developed
mainly by L. Demkowicz and J. Gopalakrishnan, see e.g. \cite{64.14,75.61}. The general underlying methodology aims,
in particular, at an improved treatment of problem classes that are, roughly speaking, much less understood than classical second order elliptic problems.
Of course, ``improved'' leaves much room for interpretation but for us, predominant aspects are the following: 
\begin{itemize}
\item[(i)]
Ideally, even though the original problem may be unsymmetric or indefinite, the arising system matrices are symmetric positive definite
and sparse, so that one has a chance to keep the computational complexity proportional to the problem size.
\item[(ii)]
Ideally, the method is based on a DG-type variational formulation that establishes a tight relation between errors and residuals.
\end{itemize}
We emphasize that we mean in (ii) the {\em outer} residual, i.e., the residual in a full infinite dimensional space where it is well defined.
Once  a suitable topology for this space is identified such a residual can be used as a rigorous foundation for deriving error indicators
that could steer adaptive techniques. Being able to do this beyond the class of elliptic problems is a major motivation for this paper.
Specifically, the central objective of this paper is to discuss (i) and (ii) for a class of {\em linear transport equations} with possibly {\em variable}
convection field.

We explain next the relevance of (i), (ii) for us in more detail, relate our findings to the state of the art, and lay out the objectives of the present work.
\subsection{Conceptual background and motivation}\label{S1.1}
 Both issues (i), (ii) above rely crucially on the notion of {\em optimal test bases}. The key underlying idea is easily described in an abstract
framework and has been presented in the literature in different variants for different purposes \cite{BaMo84,35.856,64.14,75.61,64.573,58.3,DPW}.
To explain this let $\U,\V$ denote
Hilbert spaces over $\R$, endowed with norms $\|\cdot\|_\U, \|\cdot\|_\V$, respectively,
 and assume that $b(\cdot,\cdot): \U \times \V \to \R$ is
a continuous  bilinear form.  Given $f\in \V'$, the normed dual of $\V$, endowed with the norm
$$
\|w\|_{\V'}:= \sup_{v\in \V}\frac{|w(v)|}{\|v\|_\V},
$$

 consider the  variational problem
\be
\label{varprob}
b(u,v) = f(v),\quad v\in \V.
\ee
Since the form $b(\cdot,\cdot)$ is continuous, i.e., 
$$
\|\cB\|:=
 \sup_{\|v\|_\V\le 1}\sup_{\|w\|_\U\le 1} b(w,v) <\infty,
$$
the operator $\cB:\U \to \V'$, defined by $(\cB w)(v)= b(w,v)$, $w\in\U, v\in\V$, is   continuous and 
 \eqref{varprob} is equivalent to the operator equation
\be
\label{opeq}
\cB u = f.
\ee
 Its unique solvability  is well known to be equivalent to the validity of the inf-sup conditions
\be
\label{inf-sup}
\inf_{w\in\U}\sup_{v\in\V}\frac{b(w,v)}{\|w\|_\U\|v\|_\V}\ge \beta ,\quad \inf_{v\in\U}\sup_{w\in\V}\frac{b(w,v)}{\|w\|_\U\|v\|_\V}\ge \beta,
\ee
for some positive $\beta$, i.e.,  $\cB\in \Lis(\U,\V')$ where $\Lis(\mathbb{X},\mathbb{Y})$   denotes  the collection of norm-isomorphisms from a Hilbert space $\mathbb{X}$ onto a Hilbert space
$\mathbb{Y}$.

Moreover, denoting by $\cL(\mathbb{X},\mathbb{Y})$ the space of bounded linear operators from the normed linear space $\mathbb{X}$
 to the  normed linear space $\mathbb{Y}$, it is well known that
 \mbox{$\|\cB^{-1}\|_{\cL(\V',\U)}\le \beta^{-1}$}. Thus,

the {\em condition number} of $\cB\in \Lis(\U,\V')$ 
$$
\kappa_{\U,\V'}(\cB):= \|\cB\|_{\cL(\U,\V')}\|\cB^{-1}\|_{\cL(\V',\U)}
$$
satisfies
$$
\kappa_{\U,\V'}(\cB) \le \|\cB\|/\beta,
$$
i.e., the smaller $\|\cB\|$ and the larger $\beta$, the better.
  In particular,
since in these terms $\|w\|_\U\le \beta^{-1}\|\cB w\|_{\V'}$, $\|\cB w\|_{\V'}\le \|\cB\| \|w\|_\U$, we do have for any approximation
$\bar u$ to the solution $u$ of \eqref{varprob} the error-residual relation
\be
\label{error-res}
\|\cB\|^{-1}\|f-\cB \bar u\|_{\V'} \le \|u-\bar u\|_\U \le \beta^{-1}\|f- \cB \bar u\|_{\V'}.
\ee
Of course, the larger  $\kappa_{\U,\V'}(\cB)$ the harder time has a numerical method based on the above variational formulation   to perform well.
Moreover, the residual in $\V'$ does then not provide accurate information about the error in $\U$.

In general one may have to face two types of obstructions:  first, $\kappa_{\U,\V'}(\cB)$ - although finite - could be very large. A typical example
is a convection dominated convection diffusion problem for $\U=\V = H^1_0(\Omega)$.
Fixing  $\|\cdot\|_\U$ and appropriately varying $\|\cdot\|_\V$, ore vice versa, may lead to a different variational formulation with
a much smaller condition number, ideally even equal to one, see \cite{58.3}.
The prize to be paid is that one has to accept that trial and test space (already on the infinite dimensional level) are different.
This is the second obstruction, namely having to deal with an {\em asymmetric} variational formulation - $\U\neq \V$ - so that
the uniform discrete stability of projected versions of \eqref{varprob} is no longer for granted even though the inf-sup constant $\beta$
in \eqref{inf-sup} may be close to one.

The present paper is concerned with this second issue, starting with a well-conditioned infinite dimensional  variational formulation --
later for a class of   transport
equations. Then, given a (finite dimensional) {\em trial space} $\U^h\subset \U$ we wish to find a {\em test space} $\TT^h  \subset \V$
that inherits the stability \eqref{inf-sup} of the infinite dimensional problem  (for a positive constant possibly  smaller than $\beta$, but $h$-independent), and therefore deserves to be called  (uniformly) ({\em near}--){\em optimal}.
To identify such a  near--optimal test space, notice first that the {\em trial-to-test-map} 
 $\cT \in \Lis(\U,\V)$, defined by 
 
\begin{equation} 
\label{vartestexact}
\langle \cT u , v\rangle_\V=b(u;v) \quad(u \in \U,\,v \in \V),
\end{equation}
yields the {\em supremizer} in the first relation of \eqref{inf-sup}, i.e.,
\be
\label{supremizer1}
\|\cT u \|_\V = \sup_{v\in\V}\frac{b(u,v)}{\|v\|_\V},
\ee
which means
\be
\label{supremizer}
\|\cT u\|_\V^2= b(u,\cT u).
\ee
Therefore, the  (truly) optimal test space for a given subspace $\U^h\subset \U$ is
\be
\label{opttestspace}
  \cT(\U^h)= \{\cT u^h\colon u^h\in \U^h\},
\ee
in the sense that the Petrov-Galerkin scheme: find $u_h\in\U_h$ such that
\be
\label{PG}
b(u_h,v_h)=f(v_h),\quad v_h \in  \cT(\U^h)   ,
\ee
is uniquely solvable and the corresponding finite dimensional operator has  at most the same condition number as the
infinite dimensional problem \eqref{varprob}. Moreover, \eqref{PG} is easily seen to form the normal equations for minimizing the residual 
$\|f-\cB w\|_{\V'}$
over $\U^h$, i.e.,
\be
\label{minres}
u^h=\argmin_{\bar{u}^h \in \U^h} \|f-\cB \bar{u}^h\|_{\V'}.
\ee

Denoting by
$\cR_\U\in \Lis(\U, \U')$  the {\em Riesz-map} defined by
\be
\label{Riesz-lift}
\langle z, w\rangle_\U = (\cR_\U z)(w),\quad z , w\in \U,
\ee
we have, of course, $\cT = \cR_\V^{-1}\cB = \cR_{\V'}\cB$. Hence, the application of $\cT$ amounts to solving an
infinite dimensional Galerkin problem in $\V$. Thus,
for each basis function $\phi\in \U^h$,  finding the corresponding test-basis function $\psi= \cT\phi$,
would require solving an infinite dimensional variational problem, possibly even of
the same complexity  as the one for solving \eqref{varprob}. 

A natural idea propagated in many works  (see e.g. \cite{64.14,45.44,58.3,35.856})
is to reduce this $\V$-projection to a finite dimensional subspace $\V^h\subset \V$ which we refer to as the {\em test-search-space}. Specifically,
this amounts to replacing $\cT$ by the mapping $\cT^h=  \cT^{\V^h}  \in \cL(\U,\V^h)$,

defined by 
\begin{equation} \label{vartest}
\langle \cT^h u , v^h\rangle_\V=b(u;v^h) \quad(u \in \U,\,v \in \V^h),
\end{equation}
whose existence is guaranteed by Riesz' representation theorem. 
Given a closed linear trial space $\U^h \subset \U$, and denoting by $\mathcal{P}_{\V^h}$ the $\V$-orthogonal projection  onto $\V^h$, defined by $\langle \mathcal{P}_{\V^h}v,z\rangle_\V = \langle v,z\rangle_\V$, $v \in  {\V}\,,z\in \V^h$, we see that $\cT^h=\mathcal{P}_{\V^h}\circ\cT$.
The range of its restriction to $\U^h$

$$
  \cT^h(\U^h) = (\mathcal{P}_{\V^h}\circ\cT)(\U^h), 
  $$
 known as the {\em projected optimal test space},  will now be used as test space in  the {\em Petrov-Galerkin} problem of finding $\tilde{u}^h \in \U^h$ such that
\begin{equation} \label{APGa}
b(\tilde{u}^h;v^h)=f(v^h) \quad (v^h \in   \cT^h(\U^h)).
\end{equation}
Our key requirement on $\V^h$ is that

\begin{equation} 
\label{5}
\gamma^h:=\inf_{0 \neq w^h \in \U^h} \sup_{0 \neq v^h \in \V^h} \frac{b(w^h;v^h)}{\|w^h\|_\U \|v^h\|_\V}\ge \gamma >0,
\end{equation}
holds uniformly in $h$. 
 Then the (projected optimal) test space $\cT^h(\U^h)$ is near-optimal. 
In particular, a generalized C\'{e}a's lemma shows that
\be
\label{GenCea}
\|u-\tilde{u}^h\|_\U \leq    \frac{\|\cB\|_{\cL(\U,\V')}}{\gamma^h}  \inf_{w^h \in \U^h} \|u-w^h\|_\U,
\ee
see e.g. \cite[Thm.~2.1]{75.61}, \cite[Prop.~2.3]{35.8565}, \cite{45.44,58.3}.

Recall that a necessary condition for
realizing our initial objective (i) of linearly scaling computational complexity  is that
\be
\label{desired}
{\rm dim}\,\V^h  \eqsim {\rm dim}\,\U^h,
\ee
uniformly in $h$. 

Note, however, that even when \eqref{desired} holds, determining the corresponding  projected optimal test space still requires solving for each basis function a discrete problem 
which,  generally, has  the same size as the corresponding Petrov-Galerkin problem itself.

Therefore a central objective is to keep also the cost for {\em computing}
$\cT^h(\U^h)$ under control, which is the primary focus of this paper. One strategy  is to {\em localize} the computation of the projected optimal test functions. As advocated by Demkowicz and Gopalakrishnan in several of their works, this localization can be achieved by replacing the ``original'' formulation \eqref{varprob} from the start by a 
 mesh-dependent {\em Discontinuous-Petrov-Galerkin} formulation
\be
\label{DG1} 
b_h(U,v) =   (\cB_hU)(v) = f(v),\quad v\in \V, 
\ee
see e.g. \cite{64.14}. Here, the ``new'' unknown $U$ may now involve in addition to the original field  $u$ also a ``skeleton-component'' that lives on the union 
$\partial\Omega_h$ of cell interfaces of the  underlying mesh  $\Omega_h$. For smooth solutions this skeleton-component agrees with
the traces of $u$ on $\partial\Omega_h$ but these traces may not a priori exist  for all elements in the function space  for $u$.
Choosing now  the (infinite dimensional) test space 
as a ``broken'' space
\be
\label{brokenV}
\V := \prod_{K\in \Omega_h} \V_K, \quad \|v\|^2_{\V}:= \sum_{K\in\Omega_h}\|v\|^2_{\V_K},
\ee
the trial-to-test-mapping $\cT:\U \to \V$ indeed localizes, i.e.,  for $b_h(u,v) = \sum_{K\in\Omega_h}b_K(u,v)$ we have
\be
\label{Tlocal}
\cT u = \sum_{K\in\Omega_h} \cT_K u,\quad \mbox{where}\quad \langle \cT_K u,v\rangle_{\V_K} = b_K(u,v),\quad v\in \V_K.
\ee

One now faces two main issues:
 \begin{itemize}
 \item[(I)]
 Imposing the structure \eqref{brokenV} on the test space, it is not clear that the {\em infinite dimensional} (new) variational formulation \eqref{DG1}
  is well-posed. More precisely, one has to establish {\em uniform} inf-sup stability with respect to a given family of
  partitions $\Omega_h$ with decreasing mesh size parameter $h$.    
 \item[(II)]
 For a given finite dimensional trial space $\U^h$ associated with $\Omega_h$, one still has to find a {\em finite dimensional} test search space
 $$
 \V^h = \prod_{K\in \Omega_h} \V^h_K,
 $$

 that satisfies \eqref{5}.
 \end{itemize}

Regarding our introductory issues (i) and (ii), realizing a linear scaling of the computational work for  the uniformly stable Petrov-Galerkin problems
 one would need to assure that ${\rm dim}\,\V^h \lesssim {\rm dim}\,\U^h$, uniformly in $h$. This would be the case if one were able to assert that for some fixed $M\in \N$,  
 \be
 \label{finiteVK}
 {\rm dim}\,\big(\V_K^h\big) \le M,\quad h \to 0,
 \ee
 suffices to warrant the desired uniform inf-sup stability, and as a consequence, the desired rigorous 
 error-residual relation \eqref{error-res}.
 
 To our knowledge, the only case for which these desiderata have been rigorously established concerns second order elliptic problems
 \cite{75.61}.
   The central objective of this paper is to establish    \eqref{finiteVK} in conjunction with uniform (in $h$) 
inf-sup stability in the DPG context
 for a class of  linear {\em transport equations} with a possibly {\em variable convection} field.

The proof of this result and necessary prerequisits turns out to be quite elaborate. 
Our motivation for investing in a rigorous stability analysis for transport equations stems in part from several envisaged applications 
 that will be addressed in more detail in forthcoming work. This concerns, in particular,   the design and analysis
 of rigorous adaptive methods for transport equations and, in fact, for a somewhat wider scope of problems where 
 transport plays a dominant role such as kinetic models.

\subsection{Layout of the paper}\label{ssec:layout}

In Section~\ref{Stransport} we formulate the first order linear transport equations treated in this  paper.   Section~\ref{Sbroken} is devoted 
to its variational formulation and the proof of its well-posedness, addressing the aforementioned issue (I). 
In Section~\ref{S5} we derive and analyse optimal test functions along with their computable near-optimal counterparts
culminating in the uniform stability  of the DPG scheme, i.e., this section deals with issue (II).

In this work, by $C \lesssim D$ we will mean that $C$ can be bounded by a multiple of $D$, independently of parameters which C and D may depend on. Obviously, $C \gtrsim D$ is defined as $D \lesssim C$, and $C\eqsim D$ as $C\lesssim D$ and $C \gtrsim D$.

\section{Transport equation} \label{Stransport}
For a bounded Lipschitz domain $\Omega \subset \R^n$, let ${\bf b} \in W^0_\infty(\divv;\Omega)$, i.e., ${\bf b} \in L_\infty(\Omega)^n$ with $\divv {\bf b} \in L_\infty(\Omega)$.
We set $$
H({\bf b};\Omega):=\{u \in L_2(\Omega)\colon{\bf b}\cdot \nabla u \in L_2(\Omega)\},
$$
equipped with  the norm $\|u\|_{H({\bf b};\Omega)}^2:=\|u\|_{L_2(\Omega)}^2+\| {\bf b}\cdot \nabla u\|_{L_2(\Omega)}^2$.

In order to define the {\em characteristic, outflow, and inflow} boundary portions $\Gamma_0, \Gamma_+,\Gamma_- \subset \partial\Omega$,
respectively, under the above assumptions on the velocity field $\bbf$ we use the (formal) integration-by-parts formula
$$
\int_\Omega 2 w {\bf b}\cdot \nabla w +w^2 \divv {\bf b} \,d{\bf x} =\int_{\partial\Omega} w^2 {\bf b}\cdot {\bf n} \,d {\bf s},
$$
to define  the characteristic boundary
$\Gamma_{\!0}$ as the largest measurable subset of $\partial\Omega$ such that the left-hand side vanishes for all $w \in H({\bf b};\Omega) \cap C(\bar{\Omega})$ that vanish on $\partial\Omega \setminus \Gamma_{\!0}$.
Similarly, we set the outflow boundary $\Gamma_{\!+}$ as the largest measurable subset of $\partial\Omega \setminus \Gamma_{\!0}$ such that $\int_\Omega 2 w {\bf b}\cdot \nabla w +w^2 \divv {\bf b} \,d{\bf x} \geq 0$ for all $w \in H({\bf b};\Omega) \cap C(\bar{\Omega})$ that vanish on $(\partial\Omega \setminus \Gamma_{\!0})\setminus \Gamma_{\!+}$, and finally, we define the inflow boundary as
$\Gamma_{\!-} = \partial\Omega \setminus (\Gamma_{\!0}\cup\Gamma_{\!+})$.
For continuous ${\bf b}$, it means that $\Gamma_{\!0}:=\{x \in \partial\Omega \colon {\bf b}(x) \cdot {\bf n}(x) =0\}
$ whenever $\bn(x)$ is uniquely defined, and
$
\Gamma_{\!\pm}:=\{x \in \partial\Omega \colon \pm {\bf b}(x) \cdot {\bf n}(x) >0\}
$.

For a ${\bf b} \in W^0_\infty(\divv;\Omega)$, and an $c \in L_\infty(\Omega)$, we consider
the transport equation of finding $u:\Omega \rightarrow \R$ that, for given $f\colon \Omega \rightarrow \R$ and $g \colon \Gamma_{\!-} \rightarrow \R$, solves
\begin{equation} \label{transport}
\left\{
\begin{array}{r@{}c@{}ll}
{\bf b}\cdot \nabla u +c u&\,\,=\,\,& f &\text{ on } \Omega,\\
u&\,\,=\,\, & g &\text{ on } \Gamma_{\!-}.
\end{array}
\right.
\end{equation}

When $g=0$ a first canonical variational formulation of the transport problem reads: find $u$ such that
\be
\label{first}
\int_\Omega  ({\bf b} \cdot \nabla u +c u) v \,d{\bf x}=\int_\Omega  f v\,d{\bf x}
\ee
holds for all smooth test functions $v\in C^\infty(\bar\Omega)$. A second variant  seeks  $u$ such that
\be
\label{second}
\int_\Omega (cv - \divv v {\bf b})u \,d{\bf x}=\int_\Omega  f v -\int_{\Gamma_{\!-}} g v {\bf b}\cdot {\bf n}\,d{\bf x}
\ee
holds for all smooth test functions $v$ that vanish on $\Gamma_{\!+}$. 
Note that in the second formulation, the Dirichlet boundary condition enters as a {\em natural} condition, and therefore this formulation applies equally well for  an inhomogeneous boundary condition on $\Gamma_-$.

Applying Cauchy-Schwarz followed by taking closures, shows that the
 Hilbert spaces
$$
H_{0,\Gamma_{\!\pm}}({\bf b};\Omega):=\clos_{H({\bf b};\Omega)}  \{u \in H({\bf b};\Omega) \cap C(\bar{\Omega}) \colon u=0 \text{ on } \Gamma_{\!\pm}\}.
$$
are relevant for these variational formulations. In fact, 
the operators
$$
\cB:=u \mapsto {\bf b} \cdot \nabla u + cu, \quad \cB^\ast:=v \mapsto  c v-\divv{v {\bf b}} 
$$
are obviously continuous as mappings into $L_2(\Omega)$, i.e.,
$$
\cB \in \cL(H_{0,\Gamma_{\!-}}({\bf b}; \Omega),L_2(\Omega)),\quad \cB^* \in \cL(H_{0,\Gamma_{\!+}}({\bf b}; \Omega),L_2(\Omega)).
$$
In addition, we {\em assume} that
\begin{align} \label{14}
\cB& 
\in \Lis(H_{0,\Gamma_{\!-}}({\bf b}; \Omega),L_2(\Omega)),\\
\cB^\ast&
 \in \Lis(H_{0,\Gamma_{\!+}}({\bf b}; \Omega),L_2(\Omega)), \label{15}
\end{align}
meaning that the first (for $g=0$) or second variational form of the problem is well-posed over $H_{0,\Gamma_{\!-}}({\bf b}; \Omega) \times L_2(\Omega)$ or $L_2(\Omega) \times H_{0,\Gamma_{\!+}}({\bf b}; \Omega)$, respectively.
These assumptions are readily verified for non-zero, constant ${\bf b}$, but are not necessarily satisfied for every vector field ${\bf b}$ as,
for instance,  when flow curves associated to $\pm {\bf b}$ do not reach the boundary. Sufficient conditions for both assumptions are ${\bf b} \in C^1(\bar{\Omega})$ with ${\bf b}(x) \neq 0$ for $x \in \bar{\Omega}$,

or $c-\frac{1}{2} \divv {\bf b} \geq \kappa>0$ a.e. on $\Omega$, for some constant $\kappa$, see \cite[Remark~2.2]{58.3}.

\section{A variational formulation of the transport equation with broken test and trial spaces} \label{Sbroken}
In order to allow us to eventually localize the  determination of the optimal test functions we follow the approach introduced by Demkowicz and Gopalakrishnan \cite{64.14} replacing \eqref{second} by a {\em Discontinuous Galerkin} formulation.
We introduce first the relevant notation.

For any $h$ from an index of mesh parameters, let $\Omega_h$ be a collection of disjoint open Lipschitz domains (`elements') such that $\bar{\Omega}=\bigcup_{K \in \Omega_h} \bar{K}$.
We will refer to such an $\Omega_h$ as a partition of $\Omega$.
For each $K \in \Omega_h$, we split its boundary into characteristic and in- and outflow boundaries, i.e.,  $\partial K=\partial K_0 \cup \partial K_{\!+} \cup \partial K_{\!-}$, and denote by
$$
\partial\Omega_h :=\cup_{K \in \Omega_h} \partial K\setminus \partial K_0
$$
  the {\em mesh skeleton}, i.e., the union of the non-characteristic  boundary portions of the elements.

Let us first assume that  $g=0$  referring to   Remark~\ref{rem1} for $g \neq 0$. Moreover, denoting by $\nabla_h$ the piecewise gradient operator,
 let us introduce  the spaces
$H({\bf b};\Omega_h)=\{v \in L_2(\Omega)\colon{\bf b} \cdot \nabla_h v \in L_2(\Omega)\}$, equipped with squared ``broken'' norm $\|v\|^2_{H({\bf b};\Omega_h)}:=\|v\|_{L_2(\Omega)}^2+\|{\bf b} \cdot \nabla_h v\|_{L_2(\Omega)}^2$,   and let
$$
H_{0,\Gamma_{\!-}}({\bf b};\partial\Omega_h):=\{w|_{\partial\Omega_h}\colon w \in H_{0,\Gamma_{\!-}}({\bf b};\Omega)\},
$$
equipped with quotient norm
\be
\label{quotient}
\|\theta\|_{H_{0,\Gamma_{\!-}}({\bf b};\partial\Omega_h)}:=\inf\{\|w\|_{H({\bf b};\Omega)}\colon\theta =w|_{\partial\Omega_h},\,w \in H_{0,\Gamma_{\!-}}({\bf b};\Omega)\}.
\ee
A standard {\em piecewise} integration-by-parts of the transport equation \eqref{transport}   leads to the following problem:
\begin{equation}
\label{16}
\left\{
\begin{Array}{l}
\text{For } \U:=L_2(\Omega) \times H_{0,\Gamma_{\!-}}({\bf b};\partial\Omega_h),\,\V:= H({\bf b};\Omega_h)

,\\
\text{given } f \in H({\bf b};\Omega_h)', \text{find } (u,\theta) \in \U \text{ such that for all  } v \in \V,\\
b_h(u,\theta;v) := \int_\Omega (cv-{\bf b}\cdot\nabla_h v -v \divv {\bf b})u \,d{\bf x}+\int_{\partial\Omega_h} \llbracket v {\bf b} \rrbracket \theta \,d{\bf s} =f(v).
\end{Array}
\right.
\end{equation}

Here we define as usual for $x \in \partial K \cap \partial K'$, 
$$
\llbracket v {\bf b} \rrbracket(x):=(v {\bf b}|_{K}\cdot{\bf n}_K)(x)+(v {\bf b}|_{K'}\cdot{\bf n}_{K'})(x),
$$
and $\llbracket v \rrbracket(x):=(v {\bf b}|_{K}\cdot{\bf n}_K)(x)$ for $x \in \partial\Omega \cap \partial K$.

The additional independent variable $\theta$ replaces the trace $u|_{\partial\Omega_h}$ which is not defined for general $u \in L_2(\Omega)$. If 
 $f \in L_2(\Omega)$, or, equivalently, $u \in H_{0,\Gamma_{\!-}}({\bf b};\Omega)$, then a reversed integration by parts shows that indeed
$\theta=u|_{\partial\Omega_h}$.

Well-posedness of the variational formulation \eqref{16} is demonstrated in the next theorem.
It is an adaptation of \cite[Thm.~5.1]{35.856} where we employ here slightly different spaces $\U$ and $\V$, and 
where we exhibit explicit bounds on the norms of the operator and its inverse.

In \cite{35.856}, the spaces were chosen such that both $\theta$ and $v$ vanish on $\Gamma_{\!+}$. Also the transport equation here is more general since it may contains a reaction term. For convenience we include the proof.

In the following, we abbreviate $\|\cB^{-1}\|_{\cL(L_2(\Omega),H_{0,\Gamma_{\!-}}({\bf b};\Omega))}$, $\|(\cB^\ast)^{-1}\|_{\cL(L_2(\Omega),H_{0,\Gamma_{\!+}}({\bf b};\Omega))}$, $\|\divv {\bf b}\|_{L_\infty(\Omega)}$, $\|c\|_{L_\infty(\Omega)}$, and $\|c-\divv {\bf b}\|_{L_\infty(\Omega)}$ as $\|\cB^{-1}\|$, $\|B^{-\ast}\|$, $\|\divv {\bf b}\|$, $\|c\|$, and $\|c-\divv {\bf b}\|$ respectively. $\cB, \cB^\ast$, induced by the
conforming formulations \eqref{first}, \eqref{second}, should not be
confused with the operators  $\cB_h$ induced by the DPG formulation.

\begin{theorem} \label{th4}
Assume that $\bbf\in W^0_\infty(\divv;\Omega)$, $c\in L_\infty(\Omega)$ and that 
conditions \eqref{14}, \eqref{15} hold. Then,
defining $\cB_h : \U \to \V'$ by $(\cB_h(u,\theta))(v):=b_h(u,\theta;v)$, 

one has $\cB_h \in \Lis(\U,\V')$ with
\begin{align*}
 \|\cB_h\|_{\cL(\U,\V')} &\leq 2+\|\divv {\bf b}\|+\|c-\divv{\bf b}\|,\\
\|\cB_h^{-1}\|_{\cL(\V',\U)} &\leq \sqrt{\|\cB^\ast\|^2+\tilde{C}_\cB^2},
\end{align*}
where $\tilde{C}_\cB:=(1+\|\cB^{-\ast}\| (1+\|c-\divv {\bf b}\|)) 
\|\cB^{-1}\|(\|c-\divv {\bf b}\|+1)$.
\end{theorem}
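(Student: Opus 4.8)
The plan is to establish the two inf-sup conditions for $b_h$ on $\U \times \V$ together with the boundedness of $b_h$, and then invoke the standard characterization of norm-isomorphisms (the Ne\v{c}as theorem). Boundedness is the easy direction: one estimates each of the three terms in $b_h(u,\theta;v)$ by Cauchy--Schwarz, using $\|cv - \bbf\cdot\nabla_h v - v\divv\bbf\|_{L_2(\Omega)} \le (1+\|c-\divv\bbf\|)\|v\|_{H(\bbf;\Omega_h)}$ for the volume term, and for the skeleton term the definition of the quotient norm \eqref{quotient} together with a piecewise integration by parts: for any $w \in H_{0,\Gamma_-}(\bbf;\Omega)$ with $\theta = w|_{\partial\Omega_h}$, one has $\int_{\partial\Omega_h}\llbracket v\bbf\rrbracket\theta\,d\bs = \int_\Omega (\bbf\cdot\nabla_h v + v\divv\bbf)w + v\,\bbf\cdot\nabla w\,\dx$, which is bounded by a constant times $\|v\|_\V\|w\|_{H(\bbf;\Omega)}$; taking the infimum over such $w$ gives the bound $(1+\|\divv\bbf\|)\|v\|_\V\|\theta\|$. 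Collecting the constants yields $\|\cB_h\|_{\cL(\U,\V')} \le 2 + \|\divv\bbf\| + \|c-\divv\bbf\|$ (with the reaction and divergence contributions grouped as stated).

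The main work is the inf-sup (well-posedness) estimate, i.e.\ bounding $\|\cB_h^{-1}\|$. The natural route, following \cite[Thm.~5.1]{35.856}, is: given $(u,\theta) \in \U$, construct a test function $v \in \V$ for which $b_h(u,\theta;v)$ controls $\|(u,\theta)\|_\U = (\|u\|_{L_2(\Omega)}^2 + \|\theta\|_{H_{0,\Gamma_-}(\bbf;\partial\Omega_h)}^2)^{1/2}$ from below while $\|v\|_\V$ stays controlled. The key device is that, by assumption \eqref{15}, $\cB^\ast \in \Lis(H_{0,\Gamma_+}(\bbf;\Omega), L_2(\Omega))$, so given $u \in L_2(\Omega)$ there is a unique $v_u \in H_{0,\Gamma_+}(\bbf;\Omega) \subset \V$ with $\cB^\ast v_u = cv_u - \divv(v_u\bbf) = u$, satisfying $\|v_u\|_{H(\bbf;\Omega)} \le \|\cB^{-\ast}\|\,\|u\|_{L_2(\Omega)}$. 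Because $v_u$ is globally conforming and vanishes on $\Gamma_+$, the jump terms collapse: $\int_{\partial\Omega_h}\llbracket v_u\bbf\rrbracket\theta\,d\bs = 0$ (the jumps of $v_u\bbf\cdot\bn$ across interior faces cancel, and on the outflow/inflow boundary one uses that $\theta$ comes from a function vanishing on $\Gamma_-$ and $v_u$ vanishes on $\Gamma_+$). Hence $b_h(u,\theta;v_u) = \int_\Omega (\cB^\ast v_u)\,u\,\dx = \|u\|_{L_2(\Omega)}^2$, which gives a lower bound on $\|u\|_{L_2(\Omega)}$ in terms of the residual and $\|v_u\|_\V$.

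To also recover $\|\theta\|_{H_{0,\Gamma_-}(\bbf;\partial\Omega_h)}$ one needs a second test function sensitive to the skeleton component. Here I would exploit assumption \eqref{14}: writing $\theta = w|_{\partial\Omega_h}$ for the norm-minimizing $w \in H_{0,\Gamma_-}(\bbf;\Omega)$, the idea is that $b_h(u,\theta;\cdot)$ evaluated on a suitable global test function reconstructs $\cB w = \bbf\cdot\nabla w + cw$ (roughly, one integrates by parts back), and then $\|w\|_{H(\bbf;\Omega)} \le \|\cB^{-1}\|\,\|\cB w\|_{L_2(\Omega)}$ by \eqref{14} together with the trivial bound $\|w\|_{L_2} \le \|w\|_{H(\bbf;\Omega)}$, so that $\|\cB w\|_{L_2} \ge \|w\|_{H(\bbf;\Omega)}/(\|\cB^{-1}\|\,\text{const})$ up to the reaction term, which accounts for the factor $(\|c-\divv\bbf\|+1)$ in $\tilde C_\cB$. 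Combining the two test functions (a linear combination chosen to balance the $u$-contribution that the second one also picks up, which is where the nested factor $(1 + \|\cB^{-\ast}\|(1+\|c-\divv\bbf\|))$ comes from) and taking into account that $\|v\|_\V^2 \le \|v_u\|_\V^2 + (\text{second part})^2$ produces the stated $\|\cB_h^{-1}\| \le \sqrt{\|\cB^\ast\|^2 + \tilde C_\cB^2}$ after optimizing the constants. The second inf-sup condition (for each $v \in \V$ there is $(u,\theta)$ with $b_h$ bounded below) is comparatively routine: one simply takes $u = \cB^\ast v$ (computed piecewise) and $\theta$ from a representative, and the surjectivity follows from the first inf-sup plus the fact that $\cB_h$ has closed range. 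The hard part, and the place where one must be careful about the exact constants, is the construction and coupling of the two test functions so that the skeleton term is genuinely controlled without inflating the $\V$-norm — in particular verifying that the conforming test function $v_u$ annihilates all jump contributions, including the boundary ones on $\Gamma_+\cup\Gamma_-$, which relies precisely on the choice of the spaces $H_{0,\Gamma_\pm}(\bbf;\Omega)$ and distinguishes this argument from the one in \cite{35.856}.
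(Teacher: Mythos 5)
Your boundedness estimate is fine and essentially matches the paper's (which reads it off from \eqref{170}). The real issue is the well-posedness half. You have reversed the direction of the quantitative inf-sup relative to the paper: you propose, for given $(u,\theta)\in\U$, to construct a test function $v\in\V$ with $b_h(u,\theta;v)\gtrsim\|(u,\theta)\|_\U\|v\|_\V$. The paper does the opposite: for given $v\in\V$ it constructs $(u,\theta)\in\U$, and this direction is structurally much cleaner. The reason is that in the paper's direction $u$ and $\theta$ can be chosen \emph{independently} to pair positively with the two constituents of the linear functional $b_h(\cdot,\cdot;v)$, namely $u\mapsto\int(\cB^*_h v)u$ and $\theta\mapsto\int\llbracket v\bbf\rrbracket\theta$, so one simply takes the two Riesz/duality maximizers and normalizes. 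Going your way, a \emph{single} $v$ has to pair well simultaneously with $u$ (through the volume term) and with $\theta$ (through the skeleton term), and you must therefore build $v$ as a sum of two pieces and control cross-terms.

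You carry out only the first piece. The choice $v_u\in H_{0,\Gamma_+}(\bbf;\Omega)$ with $\cB^*v_u=u$ is correct, and your observation that $\int_{\partial\Omega_h}\llbracket v_u\bbf\rrbracket\theta=0$ (used implicitly in the proof of Lemma~\ref{lem5}) so that $b_h(u,\theta;v_u)=\|u\|^2_{L_2(\Omega)}$, $\|v_u\|_\V\le\|\cB^{-*}\|\|u\|_{L_2}$, is right. But precisely because $v_u$ is conforming and vanishes on $\Gamma_+$, it is \emph{blind} to $\theta$; for $(0,\theta)$ it gives $v_u=0$, so this alone does not yield the inf-sup. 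The ``second test function'' you allude to is never constructed: a conforming $v$ only sees $\theta$ through the boundary term $\int_{\Gamma_+}vw\bbf\cdot\bn$, which couples back to $u$ via a piecewise integration by parts, and to control this you would need both a non-conforming (or at least non--$H_{0,\Gamma_+}$) test function dual to $\theta$ in the quotient norm — essentially the content of Lemma~\ref{lem5} — and a careful balancing of the resulting cross-term against $\|v\|_\V$. You flag this as ``the hard part'' but do not do it, and the claimed constant $\sqrt{\|\cB^{-*}\|^2+\tilde C_\cB^2}$ is only asserted. Finally, the remark that the remaining inf-sup ``is comparatively routine'' is backwards relative to the paper: the direction $\inf_{v}\sup_{(u,\theta)}$ is exactly the one the paper proves quantitatively (it is the $\beta$ in Lemma~\ref{lem1} that gives $\|\cB_h^{-1}\|$), while the easy qualitative step is the injectivity of $\cB_h$, which the paper disposes of in two lines by testing first with $v\in H_{0,\Gamma_+}(\bbf;\Omega)$ and then with locally supported $v$.
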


\begin{remark} 
As the bilinear form $b_h$ and the operator $\cB_h$, obviously also the spaces $\U$ and $\V$, and the solution $(u,\theta)$ 
depend on $h$,  but we supress these latter dependencies  in the notation.

\end{remark}

\begin{remark}
 A consequence of Theorem~\ref{th4} is that $H({\bf b};\Omega_h) \rightarrow H_{0,\Gamma_{\!-}}({\bf b};\partial\Omega_h)'; v \mapsto \llbracket v {\bf b} \rrbracket$ is surjective.
 \end{remark}
 
Anticipating this latter fact, we can say that the following lemma, which is the first tool
for proving Theorem \ref{th4},  provides an equivalent norm for $H_{0,\Gamma_{\!-}}({\bf b};\partial\Omega_h)'$.
In particular,  it shows that 
$H_{0,\Gamma_{\!-}}({\bf b};\partial\Omega_h)' \simeq H({\bf b};\Omega_h) /H_{0,\Gamma_{\!+}}({\bf b};\Omega)$.

\begin{lemma} \label{lem5}
For $v \in  H({\bf b};\Omega_h)$, one has $\llbracket v {\bf b} \rrbracket \in (H_{0,\Gamma_{\!-}}({\bf b};\partial\Omega_h))'$ with
$$
(\|\cB^{-1}\|(\|c-\divv {\bf b}\|+1)\big)^{-1}
\leq \frac{ \|\llbracket v {\bf b} \rrbracket \|_{H_{0,\Gamma_{\!-}}({\bf b};\partial\Omega_h)'}}{\inf_{z \in H_{0,\Gamma_{\!+}}({\bf b};\Omega)} \|v-z\|_{H({\bf b};\Omega_h)}} \leq 1+\|\divv {\bf b}\|
$$
$(v \in H({\bf b};\Omega_h) \setminus H_{0,\Gamma_{\!+}}({\bf b};\Omega))$. 
\end{lemma}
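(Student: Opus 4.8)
The plan is to prove the two inequalities separately, using the definition of the quotient norm \eqref{quotient} on $H_{0,\Gamma_{\!-}}({\bf b};\partial\Omega_h)$ together with the piecewise integration-by-parts identity that underlies \eqref{16}. The basic observation is that for $v\in H({\bf b};\Omega_h)$ and any $w\in H_{0,\Gamma_{\!-}}({\bf b};\Omega)$ with $\theta=w|_{\partial\Omega_h}$, a cellwise integration by parts gives
\[
\int_{\partial\Omega_h}\llbracket v{\bf b}\rrbracket\,\theta\,d{\bf s}
=\int_\Omega v\,({\bf b}\cdot\nabla w)\,d{\bf x}+\int_\Omega w\,({\bf b}\cdot\nabla_h v+v\,\divv{\bf b})\,d{\bf x},
\]
so that, writing $\cB^\ast v = cv-\divv(v{\bf b})$ on each cell,
\[
\langle \llbracket v{\bf b}\rrbracket,\theta\rangle
=\int_\Omega v\,({\bf b}\cdot\nabla w + cw)\,d{\bf x}-\int_\Omega w\,(\cB^\ast_h v)\,d{\bf x}
=\int_\Omega v\,(\cB w)\,d{\bf x}-\int_\Omega w\,(\cB^\ast_h v)\,d{\bf x}.
\]
First I would use this identity to get the upper bound: note that $\llbracket v{\bf b}\rrbracket$ only depends on $v$ through the values on $\partial\Omega_h$, and adding any $z\in H_{0,\Gamma_{\!+}}({\bf b};\Omega)$ to $v$ does not change $\langle\llbracket v{\bf b}\rrbracket,\theta\rangle$ — the boundary terms of $z$ on $\Gamma_+$ vanish because $\theta$ comes from $H_{0,\Gamma_{\!-}}$ (so it is supported away from $\Gamma_0$, and the pairing on $\Gamma_+\cup\Gamma_-$ works out); more precisely one checks $\int_{\partial\Omega_h}\llbracket z{\bf b}\rrbracket\theta\,d{\bf s}=0$ for such $z$ by the same integration by parts, using $z\in H_{0,\Gamma_{\!+}}$ and $w\in H_{0,\Gamma_{\!-}}$. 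Hence one may replace $v$ by $v-z$ in the identity, estimate $|\langle\llbracket v{\bf b}\rrbracket,\theta\rangle|\le \|v-z\|_{L_2}\|\cB w\|_{L_2}+\|w\|_{L_2}\|\cB^\ast_h(v-z)\|_{L_2}$, bound $\|\cB^\ast_h(v-z)\|_{L_2}\le \|c-\divv{\bf b}\|\,\|v-z\|_{L_2}+\|{\bf b}\cdot\nabla_h(v-z)\|_{L_2}$ — wait, more cleanly $\|\cB^\ast_h(v-z)\|_{L_2}^2\le(\ldots)$; in any case one gets a bound by $(1+\|\divv{\bf b}\|)\|v-z\|_{H({\bf b};\Omega_h)}\|w\|_{H({\bf b};\Omega)}$. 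Taking the infimum over $z$ on the one side and over $w$ with $\theta=w|_{\partial\Omega_h}$ on the other, and dividing by $\|\theta\|_{H_{0,\Gamma_{\!-}}({\bf b};\partial\Omega_h)}$, yields the right-hand inequality.

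For the lower bound I would argue by duality: given $v\in H({\bf b};\Omega_h)\setminus H_{0,\Gamma_{\!+}}({\bf b};\Omega)$, I want to produce a $\theta$ realizing the pairing. Using \eqref{15}, the operator $\cB^\ast\in\Lis(H_{0,\Gamma_{\!+}}({\bf b};\Omega),L_2(\Omega))$; pick $w_0\in H_{0,\Gamma_{\!-}}({\bf b};\Omega)$ via \eqref{14} solving $\cB w_0 = $ (suitable $L_2$ function), and let $\theta=w_0|_{\partial\Omega_h}$. The strategy is to choose $w_0$ so that the first term $\int_\Omega v\,\cB w_0$ controls $\inf_z\|v-z\|$: in fact by \eqref{15}, $\inf_{z\in H_{0,\Gamma_{\!+}}}\|v-z\|_{H({\bf b};\Omega_h)}\eqsim\inf_z\|v-z\|_{L_2}$ is itself $\lesssim \|\cB^\ast_h v\|_{((H_{0,\Gamma_{\!-}})^\ast\text{-like dual})}$... the cleaner route is: $\inf_{z\in H_{0,\Gamma_{\!+}}}\|v-z\|_{H({\bf b};\Omega_h)}=\sup_{\phi\perp\ldots}$ — rather, I would use that the distance of $v$ to $H_{0,\Gamma_{\!+}}({\bf b};\Omega)$ in $H({\bf b};\Omega_h)$ equals $\sup\{\langle\ell,v\rangle: \ell\in(H({\bf b};\Omega_h))',\ \|\ell\|\le1,\ \ell|_{H_{0,\Gamma_{\!+}}}=0\}$, and every such annihilating functional $\ell$ is of the form $v\mapsto\int_\Omega v\,\cB w - \int_\Omega w\,\cB^\ast_h v$ with... no: the annihilator of $H_{0,\Gamma_{\!+}}({\bf b};\Omega)$ in $H({\bf b};\Omega_h)'$ is exactly $\{v\mapsto\langle\llbracket v{\bf b}\rrbracket,\theta\rangle:\theta\in H_{0,\Gamma_{\!-}}({\bf b};\partial\Omega_h)\}$ once we know that map is onto the annihilator — which is the content of the duality $H_{0,\Gamma_{\!-}}({\bf b};\partial\Omega_h)'\simeq H({\bf b};\Omega_h)/H_{0,\Gamma_{\!+}}({\bf b};\Omega)$ flagged before the lemma. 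So the lower bound amounts to: given a norm-one annihilating functional $\ell$ nearly attaining the distance, represent $\ell=\langle\llbracket\cdot\,{\bf b}\rrbracket,\theta\rangle$, and estimate $\|\theta\|_{H_{0,\Gamma_{\!-}}({\bf b};\partial\Omega_h)}\le\|w\|_{H({\bf b};\Omega)}\le\|\cB^{-1}\|\,\|\cB w\|_{L_2}$; the remaining work is to see that $\cB w$ (the $L_2$ datum behind $\theta$) can be taken with $\|\cB w\|_{L_2}\le(\|c-\divv{\bf b}\|+1)\cdot 1$ when $\ell$ has norm one, which comes from writing $\ell(v)=\int_\Omega v\cdot g$ for some $g\in L_2$ with controlled norm — indeed $\ell$ annihilates $H_{0,\Gamma_{\!+}}$ means $g=\cB w$ for a unique $w\in H_{0,\Gamma_{\!-}}$, and $\|g\|_{L_2}$ is comparable to $\|\ell\|$ up to the factor $\|c-\divv{\bf b}\|+1$ by a short computation using $\cB^\ast_h$ on a test neighbourhood.

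\textbf{Main obstacle.} The routine parts are the two integration-by-parts identities and the Cauchy–Schwarz estimates giving the constants $1+\|\divv{\bf b}\|$ and $\|\cB^{-1}\|(\|c-\divv{\bf b}\|+1)$. The genuinely delicate point is the lower bound, specifically establishing that the trace-jump pairing map $v\mapsto\llbracket v{\bf b}\rrbracket$ has range \emph{exactly} the annihilator of $H_{0,\Gamma_{\!+}}({\bf b};\Omega)$ in $H({\bf b};\Omega_h)'$, with the norm-recovery having constant $(\|\cB^{-1}\|(\|c-\divv{\bf b}\|+1))^{-1}$ — this is where assumption \eqref{14} enters essentially (to reconstruct the lifting $w\in H_{0,\Gamma_{\!-}}({\bf b};\Omega)$ from the $L_2$ datum $\cB w$), and one has to be careful that the lifting lands in $H_{0,\Gamma_{\!-}}$ rather than merely in $H({\bf b};\Omega)$, and that the quotient-norm infimum in \eqref{quotient} is controlled. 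I expect the proof to set this up by first identifying, for a given candidate $L_2$ function $g$ with $\int_\Omega g\,z=0$ for all $z\in H_{0,\Gamma_{\!+}}({\bf b};\Omega)$... actually the cleanest version: take any $v$, let $\psi:=$ the $H_{0,\Gamma_{\!+}}$-solution of $\cB^\ast\psi=$ (the $L_2$-part of $v$) to split off the "$z$" piece, and pair what remains against the lifting of $\theta$; managing the interface of "broken" $v$ with "unbroken" $\psi,w$ across $\partial\Omega_h$ is the crux.
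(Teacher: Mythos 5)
Your proof of the \emph{upper} bound follows the paper exactly: piecewise integration by parts gives the bound $(1+\|\divv{\bf b}\|)\|v\|_{H({\bf b};\Omega_h)}$, and the observation that $\llbracket z{\bf b}\rrbracket$ pairs trivially against $H_{0,\Gamma_{\!-}}({\bf b};\partial\Omega_h)$ for $z\in H_{0,\Gamma_{\!+}}({\bf b};\Omega)$ lets you pass to the infimum over $z$. That part is fine.

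The \emph{lower} bound is where there is a genuine gap. You set up a Hahn--Banach/quotient-duality framework: identify the distance $\inf_z\|v-z\|_{H({\bf b};\Omega_h)}$ with a supremum over norm-one functionals in the annihilator of $H_{0,\Gamma_{\!+}}({\bf b};\Omega)$, and then try to represent each such functional as $v\mapsto\langle\llbracket v{\bf b}\rrbracket,\theta\rangle$ with $\|\theta\|$ controlled. But this runs into two problems. First, not every $\ell\in H({\bf b};\Omega_h)'$ that annihilates $H_{0,\Gamma_{\!+}}({\bf b};\Omega)$ is of the form $\int_\Omega v\,g\,d\bx$ for some $g\in L_2(\Omega)$ --- $H({\bf b};\Omega_h)'$ is strictly larger than $L_2(\Omega)$ --- so the step ``$g=\cB w$ for a unique $w\in H_{0,\Gamma_{\!-}}$'' doesn't apply to a general annihilating functional and there is no direct handle on $\|\theta\|$. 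Second, the identification of the annihilator with the range of $v\mapsto\llbracket v{\bf b}\rrbracket$ (i.e.\ the isomorphism $H_{0,\Gamma_{\!-}}({\bf b};\partial\Omega_h)'\simeq H({\bf b};\Omega_h)/H_{0,\Gamma_{\!+}}({\bf b};\Omega)$) is flagged in the paper as a \emph{consequence} of this lemma together with Theorem~\ref{th4}, not as a prerequisite; invoking it here is circular. You sense this (``the crux'') but don't resolve it.

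The paper's route is more constructive and avoids both difficulties. Given $v\in H({\bf b};\Omega_h)$, first solve $\cB^\ast z=cv-\divv_h(v{\bf b})$ for $z\in H_{0,\Gamma_{\!+}}({\bf b};\Omega)$ via \eqref{15}. The purpose of this specific choice is that $c(v-z)=\divv_h\big((v-z){\bf b}\big)$, which yields the crucial pointwise-type bound $\|{\bf b}\cdot\nabla_h(v-z)\|_{L_2}\le\|c-\divv{\bf b}\|\,\|v-z\|_{L_2}$, converting the broken $H({\bf b};\Omega_h)$-quotient distance into an $L_2$ distance. Then solve $\cB w=v-z$ for $w\in H_{0,\Gamma_{\!-}}({\bf b};\Omega)$ via \eqref{14} and use $\theta=w|_{\partial\Omega_h}$ as the test element; a cellwise integration by parts (using both the PDE for $w$ and the algebraic identity for $v-z$) collapses $\int_{\partial\Omega_h}\llbracket v{\bf b}\rrbracket w\,d\bs$ to exactly $\|v-z\|^2_{L_2}$, from which the constant $(\|\cB^{-1}\|(\|c-\divv{\bf b}\|+1))^{-1}$ drops out. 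Your sketch gestures at ``the $L_2$-part of $v$'' as the right-hand side for the auxiliary problem, but the correct datum is $cv-\divv_h(v{\bf b})$, chosen precisely so the two identities interlock; without that choice the estimate does not close.
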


\begin{proof}
For $v \in H({\bf b};\Omega_h)$, $w \in H_{0,\Gamma_{\!-}}({\bf b};\Omega) \subset H({\bf b};\Omega)$, we have
\begin{equation} \label{170}
\begin{split}
\int_{\partial\Omega_h} \llbracket v {\bf b} \rrbracket w \,d{\bf s} & =\sum_{K \in \Omega_h} \int_K \nabla v \cdot{\bf b} w+v({\bf b}\cdot \nabla w+w\divv{\bf b}) \,d{\bf x}\\
&\leq (1+\|\divv {\bf b}\|) \|v\|_{H({\bf b};\Omega_h)} \|w\|_{H({\bf b};\Omega)},
\end{split}
\end{equation}
showing that $ \|\llbracket v {\bf b}\rrbracket \|_{H_{0,\Gamma_{\!-}}({\bf b};\partial\Omega_h)'} \leq (1+\|\divv {\bf b}\|) \|v\|_{H({\bf b};\Omega_h)}$. Since for $z \in H_{0,\Gamma_{\!+}}({\bf b};\Omega)$ and $w \in H_{0,\Gamma_{\!-}}({\bf b};\Omega)$, $\int_\Omega \nabla z \cdot{\bf b} w+z({\bf b}\cdot \nabla w+w\divv{\bf b})\,d{\bf x}=0$, it follows that $ \|\llbracket z{\bf b}\rrbracket \|_{H_{0,\Gamma_{\!-}}({\bf b};\partial\Omega_h)'}=0$. This shows  that for $v \in H({\bf b};\Omega_h)$, $ \|\llbracket v {\bf b}\rrbracket \|_{H_{0,\Gamma_{\!-}}({\bf b};\partial\Omega_h)'} \leq (1+\|\divv {\bf b}\|)  \inf_{z \in H_{0,\Gamma_{\!+}}({\bf b};\Omega)} \|v-z\|_{H({\bf b};\Omega_h)}$.

To prove the converse estimate let $\divv_h$ denote the piecewise divergence operator.
Given $v \in H({\bf b};\Omega_h)$, let $z \in H_{0,\Gamma_{\!+}}({\bf b};\Omega)$ be the solution of 
$$
\cB^* z =c z-\divv (z{\bf b}) =c v-\divv_h (v{\bf b}),
$$
 whose existence is guaranteed by \eqref{15}. 
 From 
 \begin{equation}
 \label{v-z}
 c(v-z)=\divv_h\big((v-z){\bf b}\big)=(v-z)\divv{\bf b}+{\bf b}\cdot \nabla_h(v-z),
 \end{equation}
  we derive that
\begin{equation} \label{31}
\|{\bf b}\cdot \nabla_h(v-z)\|_{L_2(\Omega)} \leq (\|c-\divv {\bf b}\|) \|v-z\|_{L_2(\Omega)}.
\end{equation}
By \eqref{14}, there exists a $w \in H_{0,\Gamma_{\!-}}({\bf b};\Omega)$ such that $\cB w={\bf b}\cdot \nabla w+c w =v-z$ and
\begin{equation}
\label{wbound}
 \|w\|_{H({\bf b};\Omega)} \leq \|\cB^{-1}\| \|v-z\|_{L_2(\Omega)}. 
 \end{equation}
From the definitions of $w$ and $z$, we have
\begin{align*}
\|v-&z\|^2_{L_2(\Omega)}=\int_\Omega (v-z)({\bf b}\cdot \nabla w +c w) \,d{\bf x}=\sum_{K \in \Omega_h} \int_K (v-z)({\bf b}\cdot \nabla w +c w) \,d{\bf x}\\
&=
\sum_{K \in \Omega_h}  \int_K\big( \divv\big((z-v) {\bf b}\big)+ c(v-z)\big)w\,d{\bf x}+\int_{\partial K} (v-z) w {\bf b}\cdot {\bf n}_K\,d{\bf s}\\
&=\int_{\partial\Omega_h} \llbracket v {\bf b} \rrbracket w\,d{\bf s},
\end{align*}
where we have used \eqref{v-z} in the last step.
Thus, invoking \eqref{wbound}, we have
\begin{align*}
\|v- z\|^2_{L_2(\Omega)} & \le 
   \|  \llbracket v {\bf b}\rrbracket  \|_{H_{0,\Gamma_{\!-}}({\bf b};\partial\Omega_h)'} \|w\|_{H({\bf b};\Omega)}\\
  & \leq \|  \llbracket v {\bf b}\rrbracket  \|_{H_{0,\Gamma_{\!-}}({\bf b};\partial\Omega_h)'} \|\cB^{-1}\| \|v-z\|_{L_2(\Omega)}.
\end{align*}
In other words $\|v-z\|_{L_2(\Omega)} \leq  \|\cB^{-1}\| \|  \llbracket v {\bf b}\rrbracket  \|_{H_{0,\Gamma_{\!-}}({\bf b};\partial\Omega_h)'}$, which, in combination with \eqref{31}, completes the proof. 

\end{proof}

The second tool for the proof of Theorem~\ref{th4} is
the following  well-known consequence of the {\em closed range theorem}.
\begin{lemma} 
\label{lem1}
For reflexive Banach spaces $X$ and $Y$, let $G:X \rightarrow Y'$ be linear. Then $G \in \Lis(X,Y')$ if and only if
\renewcommand{\theenumi}{\roman{enumi}}
\begin{enumerate} 
\item \label{I} $G \in \cL(X,Y')$,
\item \label{II}$ \beta:=\inf_{0 \neq y \in Y}\sup_{0 \neq x \in X} \frac{(G x)(y)}{\|x\|_X \|y\|_Y}>0$,
\item \label{III} $\forall 0 \neq x \in X$, $\exists y \in Y$, with $(Gx)(y) \neq 0$.
\end{enumerate}
Moreover, one has  $\|G^{-1}\|_{\cL(Y',X)} =\frac{1}{\beta}$.
\end{lemma}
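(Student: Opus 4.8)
The plan is to recognise Lemma~\ref{lem1} as the standard combination of the closed range theorem with the open mapping theorem, the only point needing care being the bookkeeping forced by reflexivity. Write $G'$ for the Banach-space adjoint of $G \in \cL(X,Y')$, so a priori $G' \colon (Y')' \to X'$; using reflexivity of $Y$ we identify $(Y')' = Y''$ with $Y$, whence $G' \colon Y \to X'$ and $(G'y)(x) = (Gx)(y)$ for all $x \in X$, $y \in Y$. With this identification, for each fixed $0 \neq y \in Y$ one has $\sup_{0 \neq x \in X} \frac{(Gx)(y)}{\|x\|_X} = \|G'y\|_{X'}$, so that condition \eqref{II} says precisely that $G'$ is bounded below: $\|G'y\|_{X'} \geq \beta \|y\|_Y$ for all $y \in Y$.

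For the implication \eqref{I}--\eqref{III} $\Rightarrow$ $G \in \Lis(X,Y')$: by \eqref{III} the operator $G$ is injective, $\ker G = \{0\}$. Boundedness below of $G'$ forces $G'$ to be injective with closed range in $X'$; the closed range theorem, applied to $G$, then gives that $\ran G$ is closed in $Y'$ and equals the pre-annihilator of $\ker G'$. Since $G'$ is injective, $\ker G' = \{0\}$, so this pre-annihilator is all of $Y'$ and $G$ is surjective. Hence $G$ is a continuous linear bijection between the Banach spaces $X$ and $Y'$, and the open mapping theorem yields $G^{-1} \in \cL(Y',X)$, i.e.\ $G \in \Lis(X,Y')$.

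For the reverse implication, assume $G \in \Lis(X,Y')$. Then \eqref{I} is immediate and \eqref{III} follows from injectivity of $G$. The adjoint $G'$ is then also a norm-isomorphism, with $(G')^{-1} = (G^{-1})'$ and hence $\|(G')^{-1}\|_{\cL(X',Y)} = \|G^{-1}\|_{\cL(Y',X)}$. Therefore $\beta = \inf_{0 \neq y \in Y} \frac{\|G'y\|_{X'}}{\|y\|_Y} = \|(G')^{-1}\|^{-1} = \|G^{-1}\|^{-1} > 0$, which gives both \eqref{II} and the ``moreover'' claim $\|G^{-1}\|_{\cL(Y',X)} = 1/\beta$; since this last computation uses only $G \in \Lis$, it applies in either direction once invertibility has been secured.

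I do not expect a genuinely hard step. The care points are: (a) invoking reflexivity of $Y$ at the right moment, so that the adjoint lands in $X'$, the pairing $(G'y)(x) = (Gx)(y)$ is legitimate, and $\ker G'$ may be viewed inside $Y$; and (b) for the norm identity, not conflating the order of $\inf$ and $\sup$ --- the constant $\beta$ as defined ($\inf_y \sup_x$) equals $\|(G')^{-1}\|^{-1}$ directly, while the superficially different $\inf_{0\neq x \in X}\sup_{0 \neq y \in Y} \frac{(Gx)(y)}{\|x\|_X \|y\|_Y} = \big(\inf_{0\neq x \in X} \|Gx\|_{Y'}/\|x\|_X\big)^{-1} = \|G^{-1}\|^{-1}$ coincides with it only because $G$, equivalently $G'$, is invertible. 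If one prefers to avoid citing the closed range theorem, surjectivity can be shown directly: $G'$ bounded below makes $\ran G$ closed, and $\ran G$ is dense because any $\phi \in Y'' = Y$ annihilating it satisfies $G'\phi = 0$, hence $\phi = 0$; but the closed range theorem is the cleanest route, as the statement of the lemma already suggests.
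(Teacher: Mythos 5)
The paper gives no proof of Lemma~\ref{lem1}; it is stated as a ``well-known consequence of the closed range theorem'' and used directly. Your argument is correct and supplies exactly the kind of proof that citation indicates: reflexivity of $Y$ identifies $G'$ with a map $Y\to X'$ so that condition~(ii) is precisely boundedness below of $G'$, the closed range theorem then upgrades this to closedness of $\ran G$ together with $\ran G = {}^{\perp}(\ker G') = Y'$, and injectivity of $G$ from~(iii) plus the open mapping theorem finish the forward direction, while the reverse direction and the identity $\|G^{-1}\| = 1/\beta$ follow from $\|(G')^{-1}\| = \|(G^{-1})'\| = \|G^{-1}\|$ once invertibility is in hand.
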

\noindent Since $G \in \Lis(X,Y')$  is equivalent to $G'\in \Lis(X',Y)$, the roles of $X$ and $Y$ in \eqref{II} and \eqref{III} can be interchanged.

\begin{proof}[Proof of Theorem~\ref{th4}] The bound on $\|\cB_h\|_{\cL(U,V')}$ follows easily from \eqref{170}.

We will establish  the remaining claim with the aid of Lemma \ref{lem1}. To verify first \eqref{III},
  let $(u,\theta) \in \U$ be such that $b_h(u,\theta;v)=0$ for all $v \in H({\bf b};\Omega_h)$. Considering first all  $v$ from the subspace $H_{0,\Gamma_{\!+}}({\bf b};\Omega)$, \eqref{15} yields $u=0$ because $\cB$ agrees with $\cB_h$ on this subspace. 
  By considering now for any  $K \in \Omega_h$ all $v$ with ${\rm supp}\,v \subset K$, we infer that $\theta|_{\partial K }=0$, and so $\theta=0$.

Finally, let 
$v \in H({\bf b};\Omega_h)$
be given. By \eqref{15}, there exists a $v_1=v_1(v) \in H_{0,\Gamma_{\!+}}({\bf b};\Omega)$ with 
\be
\label{equal}
c v_1-\divv (v_1 {\bf b})=c v -\divv_h (v {\bf b}),\quad  \|v_1\|_{H({\bf b};\Omega)} \leq \|\cB^{-\ast}\|\|c v-\divv_h (v {\bf b})\|_{L_2(\Omega)}
\ee
Thus
$
\|v_1\|_{H({\bf b};\Omega)} \leq  \|\cB^{-\ast}\| (1+\|c-\divv {\bf b}\|)
\|v\|_{H({\bf b};\Omega_h)},
$
which says 
\begin{equation}
\label{vz}
\|v_1-v\|_{H({\bf b};\Omega_h)} \leq (1+\|\cB^{-\ast}\| (1+\|c-\divv {\bf b}\|))
\|v\|_{H({\bf b};\Omega_h)}.
\end{equation}
 Moreover, we have $v_1=v$ when $v \in H_{0,\Gamma_{\!+}}({\bf b};\Omega)$,  so that
for any $z\in H_{0,\Gamma_{\!+}}({\bf b};\Omega)$ we have $v_1(v-z) -(v-z)= v_1(v)- v$ so that \eqref{vz} actually  gives 
\begin{equation} \label{32}
\begin{split}
\|v_1-v\|_{H({\bf b};\Omega_h)} & \leq (1+\|\cB^{-\ast}\| (1+\|c-\divv {\bf b}\|)) \inf_{z \in H_{0,\Gamma_{\!+}}({\bf b};\Omega)} \|v-z\|_{H({\bf b};\Omega_h)}\\
& \leq \tilde{C}_\cB 
\|\llbracket v {\bf b} \rrbracket \|_{H_{0,\Gamma_{\!-}}({\bf b};\partial\Omega_h)'}
\end{split}
\end{equation}
by an application of Lemma~\ref{lem5}.

There exists a $\theta \in H_{0,\Gamma_{\!-}}({\bf b};\partial\Omega_h)$ with
$
\|\llbracket v {\bf b} \rrbracket \|_{H_{0,\Gamma_{\!-}}({\bf b};\partial\Omega_h)'} =
\frac{\int_{\partial\Omega_h} \llbracket v {\bf b} \rrbracket \theta \,d{\bf s}}{\|\theta\|_{H_{0,\Gamma_{\!-}}({\bf b};\partial\Omega_h)} }.
$
By selecting $\|\theta\|_{H_{0,\Gamma_{\!-}}({\bf b};\partial\Omega_h)} =\tilde{C}_\cB^{-1} \|v_1-v\|_{H({\bf b};\Omega_h)}$, and invoking \eqref{32}, we have
\begin{equation} \label{33}
\|\theta\|^2_{H_{0,\Gamma_{\!-}}({\bf b};\partial\Omega_h)} =\tilde{C}_\cB^{-2} \|v_1-v\|_{H({\bf b};\Omega_h)}^2
\leq \int_{\partial\Omega_h} \llbracket v {\bf b} \rrbracket \theta \,d{\bf s}.
\end{equation}

Similarly, there exists a $u \in L_2(\Omega)$ with $\|\cB^\ast v_1\|_{L_2(\Omega)}=\frac{\int_{\Omega} c u v_1-u \divv(v_1 {\bf b})\,d{\bf x}}{\|u\|_{L_2(\Omega)}}$. By selecting $\|u\|_{L_2(\Omega)}=\|\cB^{-\ast}\|^{-1}\|v_1\|_{H({\bf b};\Omega)}$, and using the first relation in \eqref{equal}, we infer that
\begin{equation} \label{34}
\|u\|^2_{L_2(\Omega)}=\|\cB^{-\ast}\|^{-2}\|v_1\|^2_{H({\bf b};\Omega)} \leq \int_{\Omega} c u v-u \divv_h(v {\bf b})\,d{\bf x}.
\end{equation}

The combination of \eqref{33} and \eqref{34} shows that
\begin{align*}
&(\|\cB^{-\ast}\|^2+\tilde{C}_\cB^2)^{-\frac{1}{2}} \|v\|_{H({\bf b};\Omega_h)} \\
& \leq
(\|\cB^{-\ast}\|^2+\tilde{C}_\cB^2)^{-\frac{1}{2}} (\|v_1\|_{H({\bf b};\Omega)}+ \|v_1-v\|_{H({\bf b};\Omega_h)})\\ 
& \leq
\sqrt{\|\cB^{-\ast}\|^{-2} \|v_1\|_{H({\bf b};\Omega)}^2+\tilde{C}_\cB^{-2} \|v_1-v\|_{H({\bf b};\Omega_h)}^2} \\
& =
\frac{\|\cB^{-\ast}\|^{-2} \|v_1\|_{H({\bf b};\Omega)}^2+\tilde{C}_\cB^{-2} \|v_1-v\|_{H({\bf b};\Omega_h)}^2}{
\sqrt{\|u\|^2_{L_2(\Omega)}+\|\theta\|^2_{H_{0,\Gamma_{\!-}}({\bf b};\partial\Omega_h)}}}\\
& \leq 
\frac{b(u,\theta;v)}{
\sqrt{\|u\|^2_{L_2(\Omega)}+\|\theta\|^2_{H_{0,\Gamma_{\!-}}({\bf b};\partial\Omega_h)}}}.
\end{align*}
Invoking Lemma \ref{lem1} completes the proof.
\end{proof}

\begin{remark}[Inhomogenous boundary condition] \label{rem1} The variational formulation \eqref{16} is not suited for an inhomogeneous boundary condition $u=g$ on $\Gamma_{\!-}$, because the homogeneous condition $u=0$ on $\Gamma_{\!-}$ has been incorporated in the space $H_{0,\Gamma_{\!-}}({\bf b};\partial\Omega_h)$ for the variable $\theta$.

Therefore, for $g \neq 0$, let $\bar{g} \in H({\bf b},\Omega)$ be an extension of $g$. Then with $\bar{u}:=u-\bar{g}$, one may apply the variational formulation \eqref{16} to the transport equation 
$$
\left\{
\begin{array}{r@{}c@{}ll}
{\bf b}\cdot \nabla \bar{u} +c \bar{u}&\,\,=\,\,& f -{\bf b}\cdot \nabla \bar{g}-c\bar{g} &\text{ on } \Omega,\\
\bar{u}&\,\,=\,\, & 0 &\text{ on } \Gamma_{\!-},
\end{array}
\right.
$$
which gives the problem of finding $(\bar{u},\bar{\theta}) \in \U$ such that for all $v \in \V$,
\begin{align*}
b_h(\bar{u},\bar{\theta};v)&=f(v)-\int_\Omega({\bf b}\cdot \nabla \bar{g}+c\bar{g})v\,d{\bf x}\\
&=f(v)+\int_\Omega ({\bf b}\cdot \nabla_h v+v \divv {\bf b}-cv)\bar{g}\,d{\bf x}-\int_{\partial\Omega_h} \llbracket v {\bf b} \rrbracket  \bar{g}\,d{\bf s} .
\end{align*}
When $f \in L_2(\Omega)$, it holds that $\bar{\theta}=\bar{u}|_{\partial\Omega_h}=(u-\bar{g})|_{\partial\Omega_h}$.

Alternatively, using that only the space for $\theta$ is inappropriate for $g \neq 0$, by subtracting $\int_{\Omega_h} \llbracket v {\bf b} \rrbracket  \bar{g}\,d{\bf s}$ from both sides of \eqref{16}, and introducing $\bar{\theta}:=\theta-\bar{g}|_{\partial\Omega_h}$, one arrives at the problem of finding $(u,\bar{\theta}) \in \U$ such that for all $v \in \V$,
$$
b_h(u,\bar{\theta};v)=f(v)-\int_{\partial\Omega_h} \llbracket v {\bf b} \rrbracket  \bar{g}\,d{\bf s}.
$$
\end{remark}

\section{Optimal test functions} \label{S5}
\newcommand{\T}{\mathbb{T}}

\subsection{Preliminary remarks and a roadmap}\label{ssec:roadmap}
Given a family of finite dimensional piecewise polynomial trial spaces $\U^h\subset \U= L_2(\Omega)\times
H_{0,\Gamma_-}(\bbf;\partial\Omega_h)$, parametrized by the mesh size parameter $h$, we wish to construct a uniformly stable 
finite dimensional family
of  test search spaces
$\V^h\subset \V =H({\bf b};\Omega_h)$ which, due to the product structure of $\V$, have the form
 $$
\V^h =   \prod_{K\in \Omega_h} \V_K.
$$
 By uniformly stable we mean of course that there exists a positive constant $\gamma >0$ such that \eqref{5} holds for the present setting, i.e.,

\be
\label{uniformstab}
\inf_{(u,\theta)\in\U^h}\sup_{v\in \V^h}\frac{b_{ h}(u,\theta;v)}{\|(u,\theta)\|_\U \|v\|_\V}=: \gamma^h \ge \gamma,\quad (h>0).
\ee

In view of \eqref{quotient}, it suffices to establish inf-sup stability for a slightly modified formulation replacing the 
component $\theta \in H_{0,\Gamma_-}(\bbf;\partial\Omega_h)$ by a  suitable ``lifting'' $w\in H_{0,\Gamma_-}(\bbf;\Omega)$,  i.e., $w|_{\partial\Omega_h}=\theta$, which we express by writing
$$
b_h(u,w;v) :=  \sum_{K\in\Omega_h}b_K(u,w;v),
$$
where
\begin{equation}\label{bhw}
\begin{split} 
b_K(u,w;v) &:=  \int_K (cv- \divv (\bbf v))u d\bx + \int_{\partial K}\bbf\cdot \bn_K vw d\bs\\ 
& =  \int_K (c- \divv \bbf)vu + (w-u)\bbf\cdot\nabla v+  v\bbf\cdot\nabla w +vw \divv \bbf d\bx.
\end{split}
\end{equation}
In consequence we endow $\U$ with the norm
\be
\label{Uw}
\|(u,w)\|_\U^2:= \|u\|^2_{L_2(\Omega)} + \|w\|_{H(\bbf;\Omega)}^2,
\ee
and recall from \eqref{Tlocal} that
  the  trial-to-test map $\cT: \U \to \V$  has now also product form
$$
\cT(u,w) = \big(\cT_K(u|_K,w|_{\partial K})\big)_{K \in \Omega_h},
$$ 
where each local optimal test-function $t_K= t_K(u,w) := \cT_K(u|_K,w|_{\partial K})$ is defined by
\be
\label{TK}
\langle t_K,v \rangle_{H({\bf b};K)} = b_K(u,w;v)\quad (v\in H(\bbf;K)).
\ee

Our goal is to identify stable formulations for variable fields $\bbf$ subject to the
assumptions made earlier. For such general fields one cannot expect to find truly optimal test functions, but essentially we will be able to do so for {\em piecewise constant} fields.
Therefore we will introduce a perturbed bilinear form
\be \label{barbhw0}
\breve b_h(u,w;v) :=  \sum_{K\in \Omega_h}\breve b_K(u,w;v),
\ee
where the summands $\breve b_K(u,w;v)$ are defined as follows. 
Suppose that $\cc_K,\bb_K,\dd_K$ are approximations  on $K$ to the fields $c- \divv \bbf,\bbf, \divv \bbf$, respectively. 
Then in accordance to \eqref{bhw}, we set
\begin{equation} \label{200}
\begin{split}
\breve b_K(u,w;v) :=&\int_K \cc_K vu + (w-u)\bb_K \cdot\nabla v+  v\bb_K \cdot\nabla w +\dd_K vw d\bx \\
=&\int_K \big((\cc_K +\divv \bb_K)v  -\divv(\bb_K v)\big)u+(\dd_K-\divv \bb_K) w v \,d\bx\\
&+\int_{\partial_K} \bb_K \cdot {\bf n}_K v w d\bs .
\end{split}
\end{equation}
These approximations will be specified later in Sect.~\ref{the_main_result}.
Its effect is that, for $\dd_K\neq \divv \bb_K$,  the corresponding   (near) optimal test functions no longer depend only on the traces $w|_{\partial \Omega_h}$. 

Given such a perturbed form $\breve b_h$ and a finite dimensional (piecewise polynomial) trial space $\U^h\subset \U$, we then have to carry out two main tasks:

(i) 
for any $(u,w)\in \U^h$ we wish to find a $\breve t= \breve t(u,w;\breve b_h) \in \V$, preferably piecewise polynomial,
such that $\breve b_h(u,w;\breve t) \gtrsim \|(u,w)\|_\U \|\breve t\|_{\V} $, of course, uniformly in $h$ and  in $(u,w) \in \U^h$.

(ii)
Starting from the simple decomposition
\be
\label{split}
b_h(u,w;\breve t)= \breve b_h(u,w;\breve t)+ (b_h(u,w;\breve t) - \breve b_h(u,w;\breve t)),
\ee
the choice  of $\breve t$ allows us to handle the first summand. It then remains to show for the second summand that
\be
\label{wish}
|b_h(u,w;\breve t) - \breve b_h(u,w;\breve t)|  \le  \delta \|(u,w)\|_\U \|\breve t\|_{\V},
\ee
holds for a sufficiently small $\delta >0$ , depending on the inf-sup constant for the first summand. 

Note that after having established (i)-(ii), any test search space
$$
\V^h \supseteq \Span\{\breve t(u,w;\breve b_h)\colon (u,w) \in \U^h\}
$$
will be uniformly stable in the sense of \eqref{uniformstab}.

Concerning (i), in Sect.~\ref{S_piecewise_constant} we will see that after equipping the test space by a different but equivalent norm, the trial-to-test map can be evaluated exactly.
It turns out, however, that the resulting truly optimal test functions corresponding to $\breve b_h$ are possibly very sensitive to perturbations in the convection field.
Therefore, in order to be able to simultaneously establish (ii), we will have to replace them by near optimal test functions. 

Another issue we will have to deal with is the following: If one has a bilinear form for which the corresponding operator, in the infinite dimensional setting, is boundedly invertible, then for given finite dimensional trial space, the corresponding optimal test space gives an inf-sup stable pair.
The convection field corresponding to 
the perturbed bilinear form $\breve{b}_h$, however, will generally not be in $W^0_\infty(\divv;\Omega)$, and so the theory about well-posedness in the infinite dimensional setting developed in Sect.~\ref{Sbroken} will not be applicable to this perturbed form. 
We will establish the inf-sup stability needed in (i) partly by direct calculations, and partly by invoking the well-posedness of the original bilinear form.

\subsection{Reduction to two-point boundary value problems} 

From \eqref{bhw}--\eqref{TK} recall the local variational problems
\be \label{201}
\langle t_K, v\rangle_{H(\bbf;K)} = \int_K (c v- \divv (\bbf v))u  + d wv\,d\bx+ \int_{\partial K}\bbf\cdot \bn_K vw d\bs
\ee
that determine the local optimal test functions $t_K=t_K(u,w)$. Compared to \eqref{bhw}, here we consider an ``extended'' form including a term $dwv$, similarly to \eqref{200},
because we will specify this below to approximations $\breve{b}_h$ to $b_h$.
  
When $u|_K \in H({\bf b};K)$, as is the case when $u$ is piecewise polynomial,
we can reverse integration by parts in these local problems, which reveals that they  have the  following strong form  
 \be
 \label{tK}
\left\{
\begin{array}{rcll}
-[\partial_{\bf b}^2 t_K +\partial_{\bf b} t_K\divv {\bf b} ]+t_K & = & \partial_{\bf b} u + c u +d w & \text{in } K,\\
\partial_{\bf b} t_K & = &  w-u & \text{on } \partial K_{\!+} \cup \partial K_{\!-},
\end{array}
\right.
\ee
Using a transformation to characteristic coordinates defined by 
$$
\frac{d}{d\lambda}\chi(\lambda,\bs) = \bbf (\chi(\lambda, \bs)),\quad  \chi(0,\bs)=\bs \in K_{\!-},
$$
\eqref{tK} can be viewed as a family of ordinary two-point boundary value problems. In fact, defining
  $\hat{t}_K:=t_K \circ \chi$,
 
 $\hat{u}:=u \circ \chi$, 
 
 and denoting
by $L(s)>0$ the smallest number for which $\chi(L(\bs),\bs) \in K_{\!+}$, \eqref{tK} takes the form
$$
\begin{array}{rcll}
-[\frac{d^2 \hat{t}_K}{d \lambda^2} +\frac{d \hat{t}_K}{d \lambda}  (\divv {\bf b}) \circ \chi ]+\breve{t}_K & = & \frac{d \hat{u}}{d \lambda} + c \circ \chi\, \hat{u} + (d w) \circ \chi& \text{in } (0,L(\bs)),\\
\frac{d \hat{t}_K}{d \lambda} & = &  w \circ \chi -\hat{u} & \text{at } \{0,L(\bs)\}, \end{array}\!\!\!\!\!\!\!\!
$$
which, in principle, we can   solve for each $\bs$ at any desired accuracy and, for certain $\bb, u, w$ even exactly .
 
\subsection{(Piecewise) constant convection field} \label{S_piecewise_constant}
A simple explicit representation of $t_K$ can be obtained  when 
  ${\bf b}|_K=\bb$ is constant, $K$ is polyhedral, and the restrictions of   $u$, $w$, $c$ and $d$  to each $K$ are polynomial. The
  characteristics are then straight lines and the local optimal test function $t_K$, can then be determined analytically.  It fails, however, to be itself a piecewise    polynomial.
In order to arrive in this case at (piecewise) polynomial local optimal test functions we follow an idea from \cite{64.14}.
Namely, we equip $H(\bb;K)$ with an  alternative, but equivalent Hilbertian norm.
The key is the following simple lemma.

\begin{lemma} \label{lem2} 
For $k \geq h>0$, it holds that
$$
k^2 \| v'\|_{L_2(0,h)}^2+\|v\|_{L_2(0,h)}^2 \eqsim k^2 \| v'\|_{L_2(0,h)}^2+h |v(0)|^2 \quad(v \in H^1(0,h)),
$$
where the  (hidden) constants are independent of $h$, $k\ge h$, and $v$.
\end{lemma}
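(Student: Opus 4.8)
The plan is to prove the two-sided estimate
\[
k^2\|v'\|_{L_2(0,h)}^2+\|v\|_{L_2(0,h)}^2 \eqsim k^2\|v'\|_{L_2(0,h)}^2+h|v(0)|^2
\]
by bounding $\|v\|_{L_2(0,h)}^2$ above and below in terms of $h|v(0)|^2$ and $k^2\|v'\|_{L_2(0,h)}^2$, using nothing more than the fundamental theorem of calculus $v(x)=v(0)+\int_0^x v'(t)\,dt$ and Cauchy--Schwarz, together with the crucial hypothesis $k\ge h$.

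\textbf{Upper bound on $\|v\|_{L_2(0,h)}^2$.} From $v(x)=v(0)+\int_0^x v'$ and $(a+b)^2\le 2a^2+2b^2$ we get $|v(x)|^2\le 2|v(0)|^2+2\bigl(\int_0^x v'\bigr)^2\le 2|v(0)|^2+2x\int_0^h|v'|^2\le 2|v(0)|^2+2h\|v'\|_{L_2(0,h)}^2$ for every $x\in(0,h)$. Integrating over $(0,h)$ yields $\|v\|_{L_2(0,h)}^2\le 2h|v(0)|^2+2h^2\|v'\|_{L_2(0,h)}^2$. Since $h\le k$, we have $h^2\le k^2$, so $\|v\|_{L_2(0,h)}^2\le 2h|v(0)|^2+2k^2\|v'\|_{L_2(0,h)}^2$, which immediately gives
\[
k^2\|v'\|_{L_2(0,h)}^2+\|v\|_{L_2(0,h)}^2\le 3\bigl(k^2\|v'\|_{L_2(0,h)}^2+h|v(0)|^2\bigr).
\]

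\textbf{Lower bound (i.e.\ bounding $h|v(0)|^2$ by the left side).} Conversely, write $v(0)=v(x)-\int_0^x v'$, so $|v(0)|^2\le 2|v(x)|^2+2h\|v'\|_{L_2(0,h)}^2$; integrating in $x$ over $(0,h)$ and dividing by $h$ gives $h|v(0)|^2\le 2\|v\|_{L_2(0,h)}^2+2h^2\|v'\|_{L_2(0,h)}^2\le 2\|v\|_{L_2(0,h)}^2+2k^2\|v'\|_{L_2(0,h)}^2$, again using $h^2\le k^2$. Hence $k^2\|v'\|_{L_2(0,h)}^2+h|v(0)|^2\le 3\bigl(k^2\|v'\|_{L_2(0,h)}^2+\|v\|_{L_2(0,h)}^2\bigr)$, and combining the two bounds proves the claimed equivalence with absolute constants.

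There is no real obstacle here; the only point requiring a moment's care is the role of the hypothesis $k\ge h$, which is precisely what lets one absorb the $h^2\|v'\|^2$ terms arising from the Poincaré-type estimates into the $k^2\|v'\|^2$ term so that the constants come out independent of $h$ and $k$. One should also note that both sides of the asserted equivalence are genuine (squared) norms on $H^1(0,h)$ — the right-hand side because $k^2\|v'\|^2+h|v(0)|^2=0$ forces $v'\equiv 0$ and $v(0)=0$, hence $v\equiv 0$ — but this is not needed for the estimate itself.
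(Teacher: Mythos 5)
Your proof is correct. It reaches the same conclusion as the paper but by a somewhat more hands-on route: the paper first observes that it suffices to treat $k=h$ (since adding $(k^2-h^2)\|v'\|^2\ge 0$ to both sides of an equivalence preserves it), then rescales to $h=1$, and at that point quotes Friedrich's inequality ($\|v-v(0)\|_{L_2(0,1)}\lesssim\|v'\|_{L_2(0,1)}$) and the Sobolev trace inequality ($|v(0)|\lesssim\|v\|_{H^1(0,1)}$). You instead carry out the underlying one-dimensional estimates directly at general $h$ and $k$, using $v(x)=v(0)+\int_0^x v'$ and Cauchy--Schwarz, and you track the constants explicitly, ending with constant $3$ on each side. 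The two arguments rest on exactly the same two inequalities; the paper's version is shorter by delegating to named lemmas after a scaling reduction, while yours is self-contained and makes visible how the hypothesis $k\ge h$ absorbs the $h^2\|v'\|^2$ terms, which you rightly flag as the crux. Either presentation is fine.
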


\begin{proof} First note that it is sufficient to prove the result for $k=h>0$. Next, a homogeneity argument shows that it is sufficient to consider the case that $h=1$. For this case, the statement follows from $\|v\|_{L_2(0,1)} \leq \|v-v(0)\|_{L_2(0,1)}+|v(0)| \lesssim \|v'\|_{L_2(0,1)}+|v(0)|$ by Friedrich's inequality, together with $|v(0)| \lesssim \|v\|_{H^1(0,1)}$ by Sobolev's inequality.
\end{proof}

\begin{remark} Obviously, the condition $k \geq h$ can be replaced by $k \geq C h$ for some $C>0$. Since this constant would then propagate through essentially all subsequent developments combined with further unspecified constants, we keep  for convenience  $C=1$.
\end{remark}

\begin{proposition} 
\label{prop2} 
Let $K \subset \R^n$ be a Lipschitz domain, 

and assume that 
  $0 \neq \bb \in \R^n$ is a {\em constant}.
Denoting by 
$r({\bf s})$   the distance from ${\bf s} \in \partial K_{\!-}$ to $\partial K_{\!+}$ along $\bb$,
one has for $q_K \geq |\bb|^{-1} \diam(K)$,
\begin{align*}
&
q_K^2 \|\partial_\bb v\|^2_{L_2(K)} + \|v\|^2_{L_2(K)} \ \\
& \eqsim 
q_K^2 \|\partial_\bb v\|^2_{L_2(K)}+\int_{\partial K_{\!-}} |v({\bf s})|^2 |({\textstyle \frac{\bb}{|\bb|}} \cdot {\bf n}_K) ({\bf s})| r({\bf s})d{\bf s},
\quad(v \in H(\bb;K)),
\end{align*}
where the constants are those from Lemma \ref{lem2}.
\end{proposition}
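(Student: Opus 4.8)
The plan is to reduce the multi-dimensional statement to the one-dimensional Lemma~\ref{lem2} by slicing $K$ along the characteristic direction $\bb$. Since $\bb$ is constant and nonzero, after a rotation we may assume $\bb = |\bb| e_n$, so that $\partial_\bb v = |\bb|\,\partial_{x_n} v$. Write points of $\R^n$ as $(x',x_n)$ with $x' \in \R^{n-1}$. For each $x'$ in the projection of $K$ onto $\R^{n-1}$, the slice $K_{x'} := \{x_n : (x',x_n)\in K\}$ is (a.e.) an open subset of $\R$; because $K$ is Lipschitz this slice is, for a.e.\ $x'$, a finite union of intervals, and for the leading geometric argument one treats a single interval $(a(x'),b(x'))$ whose length is exactly the chord length $r(\bs)$ at the inflow point $\bs = (x',a(x'))$ (summing over the finitely many subintervals if the slice is disconnected). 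I would then apply Fubini to write
\[
q_K^2\|\partial_\bb v\|_{L_2(K)}^2 + \|v\|_{L_2(K)}^2
= \int \Big( q_K^2 |\bb|^2 \|\partial_{x_n} v(x',\cdot)\|_{L_2(K_{x'})}^2 + \|v(x',\cdot)\|_{L_2(K_{x'})}^2 \Big)\, dx',
\]
and similarly express the right-hand side as an integral over $x'$ of the boundary term, which on each slice is $|(\tfrac{\bb}{|\bb|}\cdot\bn_K)|$ times $r(\bs)$ times $|v(\bs)|^2$ evaluated at the inflow endpoint.

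The core step is the slicewise estimate. Fix $x'$ and set $h = h(x') := |K_{x'}|$ (the chord length, $= r$); after translating, the slice is $(0,h)$ and $v(x',\cdot) \in H^1(0,h)$. Choosing $k := q_K |\bb|$, the hypothesis $q_K \ge |\bb|^{-1}\diam(K)$ gives $k = q_K|\bb| \ge \diam(K) \ge h$, so Lemma~\ref{lem2} applies with constants independent of $x'$, $h$, and $v$, yielding
\[
k^2 \|v'(x',\cdot)\|_{L_2(0,h)}^2 + \|v(x',\cdot)\|_{L_2(0,h)}^2 \eqsim k^2 \|v'(x',\cdot)\|_{L_2(0,h)}^2 + h\,|v(x',0)|^2.
\]
Multiplying through and integrating over $x'$ recovers the left-hand side of the Proposition on one side; on the other side I need to match $\int h(x')|v(x',0)|^2\,dx'$ with the surface integral $\int_{\partial K_-} |v|^2 |(\tfrac{\bb}{|\bb|}\cdot\bn_K)| r\, d\bs$. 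This is exactly the area formula / co-area relation for the graph parametrization of $\partial K_-$: the inflow boundary, being Lipschitz, is locally a graph $x_n = a(x')$, and the change of variables from $d\bs$ on $\partial K_-$ to $dx'$ on the projection carries the Jacobian factor $|(\tfrac{\bb}{|\bb|}\cdot\bn_K)(\bs)|$ — that is, $|(\tfrac{\bb}{|\bb|}\cdot\bn_K)|\,d\bs = dx'$ — so that $\int_{\partial K_-}|v|^2|(\tfrac{\bb}{|\bb|}\cdot\bn_K)|\,r\,d\bs = \int |v(x',a(x'))|^2 r(x')\,dx' = \int h(x')|v(x',0)|^2\,dx'$, as needed. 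The $q_K^2\|\partial_\bb v\|^2$ terms are identical on both sides and carry over verbatim.

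The main obstacle is handling the geometry of a general Lipschitz domain carefully: the slices $K_{x'}$ need not be single intervals, the inflow/outflow decomposition $\partial K = \partial K_0 \cup \partial K_+ \cup \partial K_-$ interacts with which endpoints of which subintervals are counted, and one must ensure the set of "bad" $x'$ (where $\bn_K$ is not defined, or the slice behaves pathologically) has measure zero and contributes nothing. The clean way around this is to first prove the identity for $v \in H(\bb;K)\cap C(\bar K)$ — so that pointwise slice values $v(x',a(x'))$ are unambiguous — establish the norm equivalence there with the constants of Lemma~\ref{lem2}, and then pass to all of $H(\bb;K)$ by density, using that both sides of the claimed equivalence are continuous (equivalently, closable) seminorm expressions in the $H(\bb;K)$-topology; the trace $v \mapsto v|_{\partial K_-}$ appearing on the right is well defined and bounded on $H(\bb;K)$ precisely because of the forward direction of the inequality applied to smooth functions. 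A minor additional point is that when a slice consists of several intervals one applies Lemma~\ref{lem2} on each and sums; the inflow endpoints of all these subintervals together constitute $\partial K_-$ over that $x'$, matching the right-hand side exactly, while the interior derivative term is additive over the subintervals.
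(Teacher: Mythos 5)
Your proposal is correct and takes essentially the same approach as the paper: rotate so $\bb$ is a coordinate direction, slice $K$ by Fubini, apply Lemma~\ref{lem2} on each chord with $h=r(\bs)$ and $k=q_K|\bb|\ge\diam(K)\ge r(\bs)$, and convert the resulting integral over the base to a surface integral via the Jacobian $|(\bb/|\bb|)\cdot\bn_K|\,d\bs = dx'$. The only (cosmetic) difference is that the paper first reduces to the extremal case $q_K=|\bb|^{-1}\diam(K)$ before applying the lemma, while you apply it directly with $k=q_K|\bb|$; your additional remarks on multi-interval slices and the density argument for general $v\in H(\bb;K)$ fill in details the paper leaves implicit, and are consistent with it.
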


\begin{proof} 
Obviously, it is sufficient to prove the statement for $q_K =|\bb|^{-1} \diam(K)$, so that $q_K^2 \|\partial_\bb v\|^2_{L_2(K)}=\diam(K)^2\|\partial_{\bb/|\bb|} v\|^2_{L_2(K)}$.
Without loss of generality we may consider the case that $\bb/|\bb| ={\bf e}_1$.
Given $x_2,\ldots,x_n$, let ${\bf s}$ denote the projection of $(x_2,\ldots,x_n)$ on $\partial K_{\!-}$ along the $x_1$-direction.
We apply Lemma~\ref{lem2} for the integration in the $x_1$-direction, where we use that for each $\bs$ the quantity $r(\bs)$ plays the role
of $h$ in Lemma \ref{lem2} while 
$\diam(K) \geq r({\bf s})$ plays the role of $k$ in Lemma \ref{lem2}.
Integrating the result over $x_2,\ldots,x_n$ and using that $d {\bf s} =\frac{|\bb|}{|\bb\cdot {\bf n}_K({\bf s})|}d x_2 \ldots d x_n$, confirms 
the claim.
\end{proof}

\begin{remark} Proposition~\ref{prop2}  can be generalized to non-constant ${\bf b}$ by applying the coordinate transformation $\chi$ involving the characteristic coordinates. The constants absorbed by the equivalence symbol $\eqsim$   depend  then also on the Jacobian of $\chi$, and the length of the characteristic curve sections connecting the in- and outflow boundary, see also \cite{64.573}.
\end{remark}

The above lines of thought were already used in \cite[(3.22)]{64.14} where, however, the (necessary) factor $|(\frac{\bb}{|\bb|} \cdot {\bf n}_K) ({\bf s})| r({\bf s})$ in the integrand of the integral over $\partial K_{\!-}$ is missing.

For later use we record the following consequence of Proposition~\ref{prop2}. 
\begin{remark}
\label{rem:4.3}  
For a {\em constant} $\bb\neq 0$, and
$$
\diam(K) \leq |\bb|,
$$
the scalar product
$$
\langle\!\langle v, z\rangle\!\rangle_{K,\bb}:=\langle \partial_\bb v,\partial_\bb z\rangle_{L_2(K)}+\int_{\partial K_{\!-}} v({\bf s}) z({\bf s}) |({\textstyle \frac{\bb}{|\bb|}}\cdot {\bf n}_K) ({\bf s})| r({\bf s})d{\bf s}.
$$
gives rise to an {\em equivalent} norm on $H(\bb;K)$,
so that this scalar product can be used to determine the local optimal test function.
\end{remark} 

Assuming that $u|_K \in H(\bb;K)$, the local optimal test function $t_K=\cT_K(u|_K,w|_{K})$ that results from replacing $\langle\,,\,\rangle_{H(\bb;K)}$ by 
$\langle\!\langle \,, \,\rangle\!\rangle_{K,\bb}$ in \eqref{201}, is the solution of
 \begin{equation} \label{18}
\left\{
\begin{array}{rcll}
-\partial_\bb^2 t_K & = & \partial_\bb u + c u  + d w& \text{on } K,\\
\partial_\bb t_K + t_K \frac{|\bb \cdot {\bf n}_K|}{|\bb| \bb \cdot {\bf n}_K} r & = &  w-u & \text{on } \partial K_{\!-},\\
\partial_\bb t_K & = &  w-u & \text{on } \partial K_{\!+}.
\end{array}
\right.
\end{equation}

We consider the case of $K$ being {\em convex}. By a rotation of coordinates, for solving \eqref{18} it is sufficient to consider the case of
$$
\bb=|\bb| {\bf e}_1,
$$
or, equivalently, to read $(x_1,\ldots,x_n)$ as Cartesian coordinates with the first basis vector being equal to $\bb / |\bb|$.
For ${\bf x}=(x,{\bf y}) \in K$, let $x_{\!\pm}({\bf y})$ be such that $(x_{\!\pm}({\bf y}),{\bf y})  \in \partial K_{\!\pm}$, see Figure~\ref{fig1}. 
\begin{figure}
\begin{center}
  \input{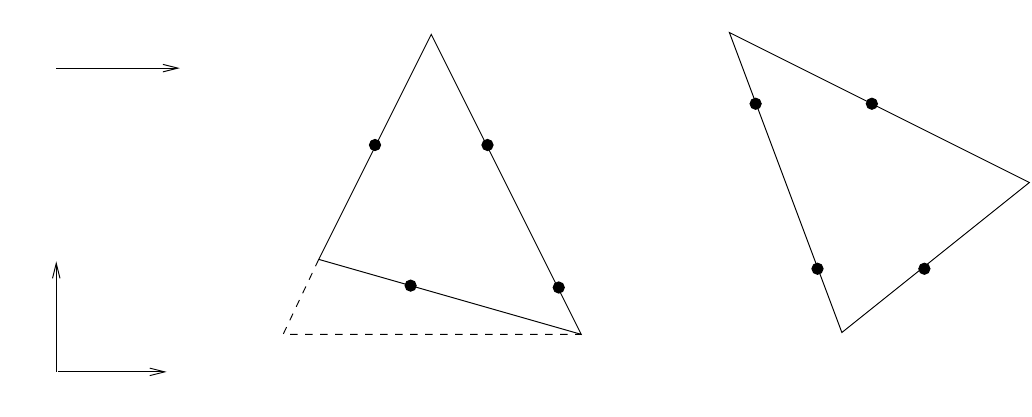_t}
\end{center}
\caption{$x_{\!\pm}$ on a triangle $K$ with two (left) or one (right) inflow boundaries (the extended triangle $\bar K$ will get its meaning in Sect.~\ref{Sstab})}
\label{fig1}
\end{figure}

Furthermore, although not essential, we will think of $c$ and $d$ as being constant on $K$ as well, and write them as $\cc$ and $\dd$.
 Then the solution
 $$
 t_K=\cT_{K,\bb,\cc,\dd}(u|_K,w|_{K})
 $$
 of \eqref{18} is given by
\begin{equation} 
\label{optimaltest}
\begin{split}
& |\bb|\, t_K(x,{\bf y})=- |\bb|^{-1} \int_{x_{\!-}({\bf y})}^x \int_{x_{\!-}({\bf y})}^z (\partial_\bb u + \cc u + \dd w)(q,{\bf y}) d q
dz
\\
&+ \Big(w(x_{\!+}({\bf y}),{\bf y})-u(x_{\!-}({\bf y}),{\bf y})+|\bb|^{-1} \int_{x_{\!-}({\bf y})}^{x_{\!+}({\bf y})} (\cc u + \dd w)(q,{\bf y}) dq \Big)\Big(x-x_{\!-}({\bf y})\Big)\\
&+ |\bb|^2 \frac{w(x_{\!+}({\bf y}),{\bf y})-w(x_{\!-}({\bf y}),{\bf y})+|\bb|^{-1} \int_{x_{\!-}({\bf y})}^{x_{\!+}({\bf y})} (\cc u+ \dd w) (q,{\bf y}) dq}{x_{\!+}({\bf y})-x_{\!-}({\bf y})}.
\end{split}
\end{equation}

For $K$ being polyhedral, the function ${\bf y} \mapsto x_{\!\pm}({\bf y})$ is continuous {\em piecewise} linear. Using that for any univariate polynomial $p$ of degree $m \geq 1$, $(\alpha,\beta) \mapsto \frac{p(\beta)-p(\alpha)}{\beta - \alpha}$ is a bivariate polynomial of degree $m-1$, we infer that for $u$, $w$  being polynomials on $K$, $t_K$ is a continuous piecewise polynomial on $K$.

\subsection{A stability issue} \label{Sstab}

Unfortunately, depending on the angle between $\bb$ and a face,  the derivatives of $x_{\!+}$ or $x_{\!-}$ can be arbitrarily large.
Consequently, generally the problem of determining an optimal test function is not stable regarding its dependence on $\bb$.

 Consequently, a serious impediment arises when the piecewise constant $\bb$ is an approximation to a variable field $\bbf$.
As will be seen later (last statement of Lemma~\ref{lem4}), when treating the second summand in \eqref{split},  one eventually has to control the $H^1$-norm of the
test functions via inverse estimates which requires controling the derivatives of $x_\pm({\bf y})$.

To tackle this problem, for $K$ being an {\em $n$-simplex},
we define an approximation $\breve{t}_K$ to $t_K$ by discarding higher order terms, which is stable as a function of $\bb$.
Moreover, whereas, for polynomial $u$ and $w$, $t_K$ is only {\em piecewise} polynomial w.r.t. a partition of $K$ that depends on the field $\bb$, $\breve{t}_K$ will be {\em polynomial}.

To define $\breve{t}_K$, first we construct a {\em polyhedral} set  $\bar{K}$ that contains $K$ as follows. The number of inflow faces of $K$ is between $1$ and $n-1$. Let $F$ be the inflow face whose normal makes the smallest angle with  $\bb$, and let $v$ denote the vertex of $K$ that does not belong to $F$. Finally let $H_F$ denote the $(n-1)$-hyperplane containing $F$. The ``shadow'' of $K$ on $H_F$, i.e.,
$$
\bar F := \big\{\bx\in H_F: \{\bx + t\bb: t\in \R\} \cap K \neq \emptyset\big\}
$$
is an $(n-1)$-dimensional polyhedron containing $F$ and let $\bar K$ denote the convex hull  of $v$ and $\bar F$,  cf. Figure~\ref{fig1} for $n=2$.
Then, by construction, $\bar{K}$ has only one inflow face $\bar{K}_{-} := \bar F$, and 
$K \subseteq \bar{K}$ with equality if and only if $K$ has only one inflow face, namely $K_- =F$.

For ${\bf x}=(x,{\bf y}) \in \bar{K} \supset K$, let $\bar{x}_{\!-}({\bf y})$ be the linear function with $(\bar{x}_{\!-}({\bf y}),{\bf y})  \in \partial \bar{K}_{-}$,
i.e., $\bar x(\by)$ agrees with $x(\by)$ on $F$.
Then we have
\begin{align}
&\diam (\bar{K}) \lesssim \diam(K),\label{108}\\
&|\bar{x}_{\!-}|_{W^1_\infty(\bar{K})} \lesssim 1, \label{104}
\end{align}
where both constants  depend only on (an upper bound for) the shape regularity parameter
$$
\varrho_K:=\frac{\diam(K)}{\sup\{\diam(B)\colon B \text{ a ball in }K\}}.
$$

For polynomials  $u$ and $w$  on $K$, say of degree $m$, we define now the {\em local   test function}
$$
\breve{t}_K= \breve{t}_{K,\bb,\cc,\dd}(u|_K,w|_K) \in \cP_{m+1}
$$
by
\begin{equation} \label{near-opt}
\begin{split}
 |\bb|\, \breve{t}_K(x,{\bf y}):=&\Big(w(\bar{x}_{\!-}({\bf y}),{\bf y})-u(\bar{x}_{\!-}({\bf y}),{\bf y}) \Big)\Big(x-\bar{x}_{\!-}({\bf y})\Big)\\
&+|\bb|(\partial_{\bb} w(\bar{x}_{\!-}({\bf y}),{\bf y}) + \cc u(\bar{x}_{\!-}({\bf y}),{\bf y})+\dd w(\bar{x}_{\!-}({\bf y}),{\bf y})).
\end{split}
\end{equation}
Since $u$ and $w$ are uniquely defined as polynomials on all of $\R^n$, the polynomial $\breve t_K$ is well-defined outside $K$ and in particular on $\bar K$.

We will show that $\breve t_K$ deserves to be termed {\em near-optimal local test function} and as a first step
we quantify the effect of the above modification.
\begin{lemma} 
\label{lem3}
Let $u|_K$ and $w|_K$ be polynomials of degree $m$. Then 
$$
\|t_K-\breve{t}_K\|_{H(\bb;K)} \lesssim |\bb|^{-1} \diam(K)\big[\|u\|_{L_2(K)}+\|w\|_{H(\bb;K)}+\|\partial_\bb u\|_{L_2(K)}+\|\partial_\bb^2 w\|_{L_2(K)}\big],
$$
only dependent on upper bounds for $m$, $|\cc|$, $|\dd|$ and $\varrho_K$, and, as always, assuming that $\diam(K) \leq |\bb|$.
\end{lemma}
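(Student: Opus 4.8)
The plan is to compare $t_K$ and $\breve t_K$ by exploiting the explicit representations \eqref{optimaltest} and \eqref{near-opt}, decomposing $t_K-\breve t_K$ into pieces that are each either a genuinely higher order term (the iterated integral and the difference-quotient term in \eqref{optimaltest}) or a term matching one in \eqref{near-opt} up to replacing $x_-(\by)$ by $\bar x_-(\by)$. Before doing this I would first observe that the roles of $\partial K_-$ (with $x_-$) in the exact formula and $\bar K_-$ (with $\bar x_-$) in $\breve t_K$ should be harmonised: since $\breve t_K$ is a polynomial on all of $\R^n$, the operator $\cT_{K,\bb,\cc,\dd}$ is linear in $(u,w)$, and both formulas are affine along characteristics, it is enough to bound each of the finitely many elementary summands appearing after expanding $|\bb|(t_K-\breve t_K)(x,\by)$ in the $H(\bb;K)$-norm.

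\textbf{Key steps.} First, recall that $H(\bb;K)$-norm bounds reduce to bounds on $\|v\|_{L_2(K)}$ and $\|\partial_\bb v\|_{L_2(K)}$, and that $\partial_\bb$ acts as $|\bb|\partial_x$ in the rotated coordinates with $\bb=|\bb|\mathbf e_1$; thus I only need to estimate the $L_2(K)$-norms of each summand and of its $x$-derivative. Second, treat the ``higher order'' summands of $|\bb| t_K$: the double integral $|\bb|^{-1}\int_{x_-}^x\int_{x_-}^z(\partial_\bb u+\cc u+\dd w)\,dq\,dz$, and the term $|\bb|^2\frac{w(x_+,\by)-w(x_-,\by)+|\bb|^{-1}\int(\cc u+\dd w)}{x_+-x_-}$; using $x-x_-\le\diam(K)$, $x_+-x_-\ge$ something controlled by $\diam(K)$ along with the convexity/$n$-simplex geometry, Cauchy--Schwarz in $q$ and $z$, and $|\bb|^{-1}\diam(K)\le 1$, each of these is $\lesssim |\bb|^{-1}\diam(K)\big[\|u\|_{L_2(K)}+\|w\|_{H(\bb;K)}+\|\partial_\bb u\|_{L_2(K)}+\|\partial_\bb^2 w\|_{L_2(K)}\big]$; the mean-value form of the difference quotient (a bivariate polynomial of degree $m-1$ in the endpoints, as noted just before Section~\ref{Sstab}) lets me rewrite $w(x_+)-w(x_-)$ as $(x_+-x_-)$ times a bounded polynomial evaluation, killing the apparent singularity and producing a factor $\partial_\bb w$ or $\partial_\bb^2 w$ after one more expansion. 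Third, for the remaining summands of $|\bb| t_K$, namely $\big(w(x_+,\by)-u(x_-,\by)+|\bb|^{-1}\int_{x_-}^{x_+}(\cc u+\dd w)\big)(x-x_-)$, compare termwise with the corresponding piece $\big(w(\bar x_-,\by)-u(\bar x_-,\by)\big)(x-\bar x_-)$ of $|\bb|\breve t_K$ and with $|\bb|(\partial_\bb w(\bar x_-)+\cc u(\bar x_-)+\dd w(\bar x_-))$: the differences $w(x_+)-w(\bar x_-)$ and $u(x_-)-u(\bar x_-)$ are, by the mean value form again, $(x_+-\bar x_-)$ resp. $(x_--\bar x_-)$ times bounded polynomial evaluations, hence controlled by $\diam(K)$ times $\partial_\bb$-derivatives, while $x-x_-$ versus $x-\bar x_-$ and the integral term $|\bb|^{-1}\int_{x_-}^{x_+}(\cc u+\dd w)$ versus $|\bb|\cc u(\bar x_-)+|\bb|\dd w(\bar x_-)$ differ by $O(|\bb|^{-1}\diam(K))$ after Taylor expansion. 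Fourth, assemble: each $L_2$-estimate uses \eqref{108} and \eqref{104} to pass freely between integration over $K$ and over $\bar K$ and to bound derivatives of $\bar x_-$, with all implied constants depending only on $m$, $|\cc|$, $|\dd|$, and $\varrho_K$; the $\partial_\bb$-derivative estimates are entirely analogous since differentiating in $x$ lowers polynomial degrees and the chain rule through $\bar x_-(\by)$ only brings in the bounded quantity $|\bar x_-|_{W^1_\infty(\bar K)}$.

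\textbf{Main obstacle.} The delicate point is the difference-quotient / mean-value bookkeeping: one must genuinely use that $(\alpha,\beta)\mapsto\frac{p(\beta)-p(\alpha)}{\beta-\alpha}$ is a polynomial of degree $m-1$ so that no division by the (possibly small) quantity $x_+(\by)-x_-(\by)$ survives, and simultaneously track that all endpoint arguments $x_-(\by)$, $x_+(\by)$, $\bar x_-(\by)$ lie within $\diam(K)$ of one another, so that polynomial evaluations at these points are comparable with $L_2(K)$-norms via inverse/norm-equivalence estimates on the simplex $\bar K$. Keeping the exponent of $|\bb|^{-1}\diam(K)$ at exactly one — no better, no worse — in each of the roughly half-dozen terms, and making sure the geometric constants entering through $x_+-x_-$ and $\bar x_-$ depend only on $\varrho_K$ and not on the angle between $\bb$ and the faces (which is precisely why $\bar K$ was introduced), is where the real care is needed; everything else is Cauchy--Schwarz and polynomial inverse estimates.
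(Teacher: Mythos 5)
Your overall strategy coincides with the paper's: compare the explicit formulas \eqref{optimaltest} and \eqref{near-opt} term by term, use the divided-difference/mean-value structure of polynomials to remove the division by $x_{\!+}(\by)-x_{\!-}(\by)$, and use the geometry of $\bar K$ via \eqref{108}--\eqref{104} together with polynomial norm comparisons between $K$ and $\bar K$, treating the function values and the $\partial_\bb$-derivatives separately. However, your pairing of the terms contains a genuine error. In your second step you claim that the last line of \eqref{optimaltest}, i.e.\ $|\bb|^2\big(w(x_{\!+}(\by),\by)-w(x_{\!-}(\by),\by)+|\bb|^{-1}\int_{x_{\!-}}^{x_{\!+}}(\cc u+\dd w)\,dq\big)/(x_{\!+}(\by)-x_{\!-}(\by))$, is \emph{by itself} bounded by $|\bb|^{-1}\diam(K)[\cdots]$. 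It is not: after dividing out the overall factor $|\bb|$, this term is, to leading order, $\partial_\bb w+\cc u+\dd w$, i.e.\ it is the dominant part of $t_K$. For example, with $\cc=\dd=0$, $u=0$, $w(x,\by)=x$ it equals $\partial_\bb w$ exactly, of $L_2(K)$-norm $|\bb|\,|K|^{1/2}$, which is not $\lesssim|\bb|^{-1}\diam(K)\|\partial_\bb w\|_{L_2(K)}$ once $\diam(K)\ll|\bb|$. This term can only be handled by cancelling it against the constant-in-$x$ part $\partial_\bb w(\bar{x}_{\!-}(\by),\by)+\cc u(\bar{x}_{\!-}(\by),\by)+\dd w(\bar{x}_{\!-}(\by),\by)$ of \eqref{near-opt}; the paper estimates precisely this difference, obtaining the factors $|\bb|^{-1}\diam(\bar K)\|\partial_\bb^2 w\|_{L_2(\bar K)}$ and $|\bb|^{-1}\diam(\bar K)\big(|\cc|\|\partial_\bb u\|_{L_2(\bar K)}+|\dd|\|\partial_\bb w\|_{L_2(\bar K)}\big)$.

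Correspondingly, your third step mis-pairs: you compare the linear-in-$x$ part $\big(w(x_{\!+})-u(x_{\!-})+|\bb|^{-1}\int_{x_{\!-}}^{x_{\!+}}(\cc u+\dd w)\big)(x-x_{\!-})$ of \eqref{optimaltest} against \emph{both} pieces of \eqref{near-opt}, and in particular assert that the unaveraged integral $|\bb|^{-1}\int_{x_{\!-}}^{x_{\!+}}(\cc u+\dd w)\,dq$ differs from $|\bb|\big(\cc u(\bar{x}_{\!-})+\dd w(\bar{x}_{\!-})\big)$ by $O(|\bb|^{-1}\diam(K))$. That is also false; these two quantities differ at leading order, and only the \emph{averaged} integral $\frac{1}{x_{\!+}-x_{\!-}}\int_{x_{\!-}}^{x_{\!+}}(\cc u+\dd w)\,dq$ is close to $\cc u(\bar{x}_{\!-})+\dd w(\bar{x}_{\!-})$. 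With the correct grouping — the double integral and the piece $|\bb|^{-1}(x-x_{\!-})\int_{x_{\!-}}^{x_{\!+}}(\cc u+\dd w)\,dq$ as standalone higher-order terms, $(w(x_{\!+})-u(x_{\!-}))(x-x_{\!-})$ against $(w(\bar{x}_{\!-})-u(\bar{x}_{\!-}))(x-\bar{x}_{\!-})$, and the last line of \eqref{optimaltest} against the remaining part of \eqref{near-opt} — the tools you list (divided differences, the $\bar K$ construction, inverse and $K\leftrightarrow\bar K$ norm comparisons, and the analogous but explicit bookkeeping for $\partial_\bb(t_K-\breve t_K)$, which you treat only in passing) do carry through and reproduce the paper's proof. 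As written, though, two of your intermediate claims fail, so your argument does not close.
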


\begin{proof} In view of the definitions of $t_K$ and $\breve{t}_K$, we split their difference, as well as the difference of $\partial_\bb t_K$ and $\partial_\bb\breve{t}_K$, into a number of terms whose $L_2(K)$-norms we bound in a straightforward way.
We start with the first task.
It holds that
\begin{align*}
&\|(x,{\bf y})\mapsto |\bb|^{-2} \int_{x_{\!-}({\bf y})}^x \int_{x_{\!-}({\bf y})}^z (\partial_\bb u + \cc u + \dd w)(q,{\bf y}) d q dz\|_{L_2(K)}
\\ &\hspace*{5em}\lesssim |\bb|^{-2} \diam(K)^2 \|\partial_\bb u + \cc u + \dd w\|_{L_2(K)},
\intertext{and}
&\|(x,{\bf y})\mapsto |\bb|^{-2} (x-x_{\!-}({\bf y}))\int_{x_{\!-}({\bf y})}^{x_{\!+}({\bf y})} (\cc u + \dd w)(q,{\bf y}) dq \|_{L_2(K)}
\\ &\hspace*{5em}\lesssim |\bb|^{-2} \diam(K)^2 \| \cc u + \dd w\|_{L_2(K)}.
\end{align*}
Writing
\begin{align*}
& |\bb|^{-1}\big\{
(w(x_{\!+}({\bf y}),{\bf y})-u(x_{\!-}({\bf y}),{\bf y}))(x-x_{\!-}({\bf y}))\\&
\hspace*{10em}-
(w(\bar{x}_{\!-}({\bf y}),{\bf y})-u(\bar{x}_{\!-}({\bf y}),{\bf y}))(x-\bar{x}_{\!-}({\bf y}))
\big\}
=\\& 
|\bb|^{-1} \big(w(x,{\bf y})-u(x,{\bf y})\big) \big(\bar{x}_{\!-}({\bf y})-x_{\!-}({\bf y})\big)\\
&+
|\bb|^{-2} \Big(\int_{x}^{x_+({\bf y})} \partial_\bb w(z,{\bf y})dz + \int_{x_{\!-}({\bf y})}^{x} \partial_\bb u(z,{\bf y})dy \Big) \big(\bar{x}_{\!-}({\bf y})-x_{\!-}({\bf y})\big)
\\
&+
|\bb|^{-2}\Big(\int_{\bar{x}_{\!-}({\bf y})}^{x_{\!+}({\bf y})} \partial_{\bb} w(z,{\bf y}) \,dz + \int_{x_{\!-}({\bf y})}^{\bar{x}_{\!-}({\bf y})} \partial_{\bb} u(z,{\bf y}) \,dz\Big)\big(x-\bar{x}_{\!-}({\bf y})\big),
\end{align*}
the $L_2(K)$-norm of the expression on the first line at the right-hand side can be bounded by a constant multiple of 
$$
|\bb|^{-1} \diam(\bar{K})\big(\|w\|_{L_2(K)}+\|u\|_{L_2(K)}\big).
$$
The terms on the second and third lines are bounded by  constant multiples of 
$$
|\bb|^{-2} \diam(\bar{K})^2\big(\|\partial_\bb u\|_{L_2(\bar{K})}+\|\partial_\bb w\|_{L_2(\bar{K})})\big).
$$

Proceeding to the difference of the last lines in \eqref{optimaltest}, respectively \eqref{near-opt},    we have
\begin{align*}
\|(x,{\bf y})\mapsto|\bb| \frac{w(x_{\!+}({\bf y}),{\bf y})-w(x_{\!-}({\bf y}),{\bf y})}{x_{\!+}({\bf y})-x_{\!-}({\bf y})}- \partial_{\bb} w(\bar{x}_{\!-}({\bf y}),{\bf y})\|_{L_2(K)}
\\ \lesssim |\bb|^{-1} \diam(\bar{K}) \|\partial_\bb^2 w\|_{L_2(\bar{K})},
\end{align*}
and
\begin{align*}
\|(x,{\bf y})\mapsto \frac{\int_{x_{\!-}({\bf y})}^{x_{\!+}({\bf y})} (\cc u+ \dd w) (q,{\bf y}) dq}{x_{\!+}({\bf y})-x_{\!-}({\bf y})}
-(\cc u(\bar{x}_{\!-}({\bf y}),{\bf y})+\dd w(\bar{x}_{\!-}({\bf y}),{\bf y}))\|_{L_2(K)}
\\
\lesssim |\bb|^{-1} \diam(\bar{K}) ( |\cc| \|\partial_\bb u\|_{L_2(\bar{K})}+ |\dd| \|\partial_\bb w\|_{L_2(\bar{K})}).
\end{align*}

Secondly, we find upper bound for the $L_2(K)$-norms for the different terms in $\partial_{\bb} (t_K-\breve{t}_K)$.
Since
\begin{align*}
\partial_\bb t_K(x,{\bf y})=&-
\Big[
u(x,\by)-u(x_{\!-}(\by),\by)+|\bb|^{-1} \int_{x_{\!-}({\bf y})}^x (\cc u + \dd w)(q,{\bf y}) d q 
\Big]
\\
&+ w(x_{\!+}({\bf y}),{\bf y})-u(x_{\!-}({\bf y}),{\bf y})+|\bb|^{-1} \int_{x_{\!-}({\bf y})}^{x_{\!+}({\bf y})} (\cc u + \dd w)(q,{\bf y}) dq,
\intertext{and}
\partial_\bb \breve{t}_K(x,{\bf y})= & \,\,w(\bar{x}_{\!-}({\bf y}),{\bf y})-u(\bar{x}_{\!-}({\bf y}),{\bf y}),
\end{align*}
we derive that
$$
\|(x,{\bf y})\mapsto u(x_{\!-}(\by),\by)-u(x,\by)\|_{L_2(K)} \lesssim |\bb|^{-1} \diam(K)  \|\partial_\bb u\|_{L_2(K)},
$$
\begin{align*}
\|(x,{\bf y})\mapsto 
|\bb|^{-1} \int_{x}^{x_{\!+}({\bf y})} (\cc u + \dd w)(q,{\bf y}) dq \|_{L_2(K)} \\
\lesssim |\bb|^{-1} \diam(K) (|\cc|\|u\|_{L_2(K)}+|\dd|\|w\|_{L_2(K)}),
\end{align*}
and
\begin{align*}
\|(x,{\bf y})\mapsto w(x_{\!+}(\by),\by)-w(\bar{x}_{\!-}(\by),\by)+u(\bar{x}_{\!-}(\by),\by)-u({x}_{\!-}(\by),\by)\|_{L_2(K)} \\
\lesssim |\bb|^{-1} \diam(\bar{K})  (\|\partial_\bb w\|_{L_2(\bar{K})}+\|\partial_\bb u\|_{L_2(\bar{K})}).
\end{align*}

The proof is completed by collecting all upper bounds,  by using that $\diam(\bar{K}) \lesssim \diam(K)$ (\eqref{108}), and that, for any polynomial $p$, $\|p\|_{L_2(\bar{K})} \lesssim \frac{|\bar{K}|}{|K|} \|p\|_{L_2({K})}$ with a constant depending  only on its degree.
\end{proof}

 As discussed earlier  below \eqref{wish}, inf-sup stability of a perturbed bilinear form $\breve{b}_h$ with respect to a given piecewise polynomial trial space and corresponding   test space based on \eqref{near-opt} will be partly established by direct calculations.  The next major step is given by the following lemma.

\begin{lemma} 
\label{lem6} 
Let $u|_K$ and $w|_K$ be polynomials of degree $m$ and assume that $\diam(K) \leq |\bb|$. For any $\eps >0$, one has
\begin{align}
\label{lower}
\|\breve{t}_K\|^2_{H(\bb;K)}
-\Big[{\textstyle \frac{1}{2+4|\cc|^{2}}} \|\partial_\bb w+(\cc+\dd)w\|_{L_2(K)}^2+
{\textstyle \frac{\eps}{2+ {8}\eps}} \|u\|_{L_2(K)}^2-
\eps\|w\|_{L_2(K)}^2\Big]\nonumber\\
 \gtrsim  {-}|\bb|^{-2} \diam(K)^2[\|u\|^2_{L_2(K)}+\|w\|^2_{H(\bb;K)}+\|\partial_\bb u\|^2_{L_2(K)}+\|\partial_\bb^2 w\|^2_{L_2(K)}
 ],
\end{align}
where the constant  depends only on upper bounds for $m$, $|\cc|$, $|\dd|$ and $\varrho_K$.
\end{lemma}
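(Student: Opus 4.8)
The plan is to use the affine (in the $\bb$-direction) form \eqref{near-opt} of $\breve t_K$ to write $\|\breve t_K\|_{H(\bb;K)}^2$ exactly in terms of two ``inflow quantities'', and then to recover the three terms of the bracket in \eqref{lower} by a chain of weighted Cauchy--Schwarz/Young estimates with weights chosen \emph{independently of $\eps$}, so that only the \emph{squared} corrections land in the error term. After a rotation we may take $\bb=|\bb|\mathbf{e}_1$ and write $\bx=(x,\by)$ as in Section~\ref{Sstab}; by \eqref{near-opt}, $\breve t_K(x,\by)=B(\by)+|\bb|^{-1}A(\by)(x-\bar{x}_{\!-}(\by))$ with $\partial_\bb\breve t_K(x,\by)=A(\by)$, where $A:=(w-u)(\bar{x}_{\!-}(\cdot),\cdot)$, $B:=(\partial_\bb w+\cc u+\dd w)(\bar{x}_{\!-}(\cdot),\cdot)=Q(\bar{x}_{\!-}(\cdot),\cdot)-\cc A$, and $Q:=\partial_\bb w+(\cc+\dd)w$. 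Writing $\|\cdot\|:=\|\cdot\|_{L_2(K)}$, letting $D^2$ denote the bracket on the right of \eqref{lower}, and using $0\le x-\bar{x}_{\!-}(\by)\le\diam(\bar K)\lesssim\diam(K)$ on $K\subseteq\bar K$ (cf. \eqref{108}), one gets $\|\breve t_K\|_{H(\bb;K)}^2=\|\breve t_K\|^2+\|A\|^2$ with $\|\breve t_K-B\|\le|\bb|^{-1}\diam(\bar K)\|A\|$.

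The workhorse is the elementary \emph{trace-to-interior} estimate: for any polynomial $p$ of degree $\le m$ one has $p(x,\by)-p(\bar{x}_{\!-}(\by),\by)=\int_{\bar{x}_{\!-}(\by)}^{x}\partial_z p(z,\by)\,dz$, so Cauchy--Schwarz in $z$, the bound on $x-\bar{x}_{\!-}$, the inclusion $K\subseteq\bar K$ and the polynomial norm equivalence $\|\cdot\|_{L_2(\bar K)}\lesssim\|\cdot\|_{L_2(K)}$ (as in the proof of Lemma~\ref{lem3}) give $\|p(\bar{x}_{\!-}(\cdot),\cdot)-p\|\lesssim|\bb|^{-1}\diam(K)\|\partial_\bb p\|$. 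Applied to $p=w-u$ and $p=Q$, and using $\diam(K)\le|\bb|$, this yields $\|A\|,\|B\|\lesssim D$, $\|A-(w-u)\|\lesssim|\bb|^{-1}\diam(K)D$ and $\|Q(\bar{x}_{\!-}(\cdot),\cdot)-Q\|\lesssim|\bb|^{-1}\diam(K)D$, with constants depending only on $m,|\cc|,|\dd|,\varrho_K$.

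The estimate \eqref{lower} is then reached in three steps. First, $\|\breve t_K\|^2\ge\|B\|^2-2\|B\|\,|\bb|^{-1}\diam(\bar K)\|A\|$; applying Young to the cross term with a fixed weight $s$ and absorbing the resulting $s^{-1}|\bb|^{-2}\diam(\bar K)^2\|B\|^2$ into the error (via $\|B\|\lesssim D$) while the fraction $s\|A\|^2$ is compensated by the $\|\partial_\bb\breve t_K\|^2=\|A\|^2$ term, we obtain $\|\breve t_K\|_{H(\bb;K)}^2\ge\|B\|^2+(1-s)\|A\|^2-C|\bb|^{-2}\diam(K)^2D^2$. Second, using $B=Q(\bar{x}_{\!-}(\cdot),\cdot)-\cc A$, the inequality $\|X-Z\|^2\ge\frac{1}{1+\eta}\|X\|^2-\frac{1}{\eta}\|Z\|^2$, and the trace estimate $\|Q(\bar{x}_{\!-}(\cdot),\cdot)\|^2\ge(1-\theta)\|Q\|^2-C|\bb|^{-2}\diam(K)^2D^2$, we get $\|B\|^2\ge\frac{1-\theta}{1+\eta}\|Q\|^2-\frac{|\cc|^2}{\eta}\|A\|^2-C|\bb|^{-2}\diam(K)^2D^2$. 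Third, choosing $\eta$ just above $|\cc|^2/(1-s)$ so that $\rho:=1-s-|\cc|^2/\eta>0$, the leftover $\rho\|A\|^2$ is cashed in for $u$ through $\|A\|^2\ge(1-\tau)\|w-u\|^2-C|\bb|^{-2}\diam(K)^2D^2$ and $\|w-u\|^2\ge\frac{1}{1+\xi}\|u\|^2-\frac{1}{\xi}\|w\|^2$. Collecting, $\|\breve t_K\|_{H(\bb;K)}^2\ge\frac{1-\theta}{1+\eta}\|Q\|^2+\frac{\rho(1-\tau)}{1+\xi}\|u\|^2-\frac{\rho(1-\tau)}{\xi}\|w\|^2-C|\bb|^{-2}\diam(K)^2D^2$. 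It remains to fix $s,\theta,\tau$ (hence $\eta,\rho$), depending only on $|\cc|$, so that $\frac{1-\theta}{1+\eta}\ge\frac{1}{2+4|\cc|^2}$ and $\rho(1-\tau)\ge\tfrac18$ (both comfortably achievable, e.g. $s=\theta=\tfrac14$, $\tau=\tfrac12$), and to set $\xi:=\rho(1-\tau)/\eps$: then the $\|w\|^2$-coefficient equals $-\eps$ and the $\|u\|^2$-coefficient is $\frac{\rho(1-\tau)\eps}{\eps+\rho(1-\tau)}\ge\frac{\eps}{2+8\eps}$, while $D^2$ is equivalent to the bracket of \eqref{lower}; this gives \eqref{lower}.

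The main obstacle — and the real content — is that every discrepancy between $\breve t_K$ and its inflow data $A,B$ is only of relative size $|\bb|^{-1}\diam(K)$, whereas \eqref{lower} requires an error quadratic in $|\bb|^{-1}\diam(K)$. Two mechanisms resolve this: the \emph{exact} cancellation of the $\|\partial_\bb\breve t_K\|^2=\|A\|^2$ summand of the $H(\bb;K)$-norm against both the $\cc A$-part of $B$ and the $A$-dependent cross term; and the systematic use of weighted Young inequalities with \emph{fixed} weights, so that it is the \emph{squares} of the $O(|\bb|^{-1}\diam(K))$ corrections that enter the error, the sole $\eps$-dependence being pushed into the single parameter $\xi\sim1/\eps$, which carries no error term of its own. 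With those two points in hand, verifying the precise constants $\frac{1}{2+4|\cc|^2}$, $\frac{\eps}{2+8\eps}$ and $\eps$ is a matter of bookkeeping (and in fact a little slack is left).
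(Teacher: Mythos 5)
Your proposal is correct and follows essentially the same route as the paper: a first Young inequality separating $\|\breve t_K\|^2_{L_2(K)}$ and $\|\partial_\bb\breve t_K\|^2_{L_2(K)}$, comparing each to its leading-order part with $O(|\bb|^{-1}\diam(K))$ corrections bounded by the trace/approximation arguments from the proof of Lemma~\ref{lem3}, followed by a second Young chain that extracts $\|\partial_\bb w+(\cc+\dd)w\|^2$, $\|u\|^2$ and $-\|w\|^2$ with the stated constants, with all weights fixed except the single $\eps$-dependent one that trades $\|w-u\|^2$ for $\|u\|^2$ and $-\|w\|^2$ and carries no error term of its own. Your explicit use of the affine representation $\breve t_K=B+|\bb|^{-1}A\,(x-\bar x_{\!-})$ (so that $\partial_\bb\breve t_K=A$ exactly) and the factoring $B=Q(\bar x_{\!-}(\cdot),\cdot)-\cc A$ is slightly cleaner bookkeeping of the same mechanism and has the side benefit of making transparent that the paper's displayed bound on $\|\breve t_K-(\partial_\bb w+(\cc+\dd)w)\|_{L_2(K)}$ contains a misprint (it should read $\partial_\bb w+\cc u+\dd w$, the term actually of distance $O(|\bb|^{-1}\diam(K))$ from $\breve t_K$), as confirmed by the paper's subsequent chain starting from $\|w-u\|^2+\|\partial_\bb w+\cc u+\dd w\|^2$.
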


\begin{proof} 
By applying Young's inequality twice, in the form $\|\sigma\|^2 \geq (1-\eta) \|\tau\|^2+(1-\eta^{-1})\|\sigma-\tau\|^2$ for $\eta \in (0,1)$, here for $\eta=\frac{1}{2}$, we have
\begin{align*}
\|\breve{t}_K\|^2_{H(\bb;K)} =&
\|\breve{t}_K\|^2_{L_2(K)}+\|\partial_\bb \breve{t}_K\|^2_{L_2(K)}\\
\geq  &{\textstyle \frac{1}{2}}\big[\|\partial_\bb w+(\cc+\dd)w\|_{L_2(K)}^2+\|w-u\|_{L_2(K)}^2\big]\\
&-
\big[\|\breve{t}_K-(\partial_\bb w+(\cc+\dd)w)\|_{L_2(K)}^2
+\|\partial_\bb \breve{t}_K -(w-u)\|_{L_2(K))}^2\big].
\end{align*}
The same arguments that were used in the proof of Lemma~\ref{lem3} show that
\begin{align*}
\|\breve{t}_K&-(\partial_\bb w+(\cc+\dd)w)\|_{L_2(K)} \lesssim  \\
|\bb|^{-1} &\diam(K)\Big\{\|\partial_\bb^2 w\|_{L_2(K)}+|\cc|\|\partial_\bb u\|_{L_2(K)} +|\dd| \|\partial_\bb w\|_{L_2(K)}
\\
&+\|w\|_{L_2(K)}+\|u\|_{L_2(K)}+|\bb|^{-1}\diam(K)\big(\|\partial_\bb w\|_{L_2(K)}+\|\partial_\bb u\|_{L_2(K)}\big)\Big\},
\end{align*}
and
$$
\|\partial_\bb \breve{t}_K -(w-u)\|_{L_2(K))} \lesssim 
|\bb|^{-1} \diam(K)[\|w\|_{L_2(K)}+\|\partial_\bb u\|_{L_2(K)}].
$$

Recalling that $\cc$ is constant on $K$ and taking $\eta=1-\frac{1}{1+2|\cc|^{2}}$,   two applications of Young's inequality provide
\begin{align*}
&\|w-u\|_{L_2(K)}^2+\|\partial_{\bb} w+\cc u+\dd w\|_{L_2(K)}^2 \\
&\geq
\|w-u\|_{L_2(K)}^2+(1-\eta)\|\partial_{\bb} w+(\cc+\dd) w\|_{L_2(K)}^2+(1-{\textstyle \frac{1}{\eta}}) \|\cc(u-w)\|_{L_2(K)}^2\\
&={\textstyle \frac{1}{2}} \|w-u\|_{L_2(K)}^2+{\textstyle \frac{1}{1+2|\cc|^{2}}}\|\partial_{\bb} w+(\cc+\dd) w\|_{L_2(K)}^2\\
&\geq 
{\textstyle \frac{\eps}{1+4\eps}} \|u\|_{L_2(K)}^2-
2\eps\|w\|_{L_2(K)}^2+{\textstyle \frac{1}{1+2|\cc|^{2}}}\|\partial_{\bb} w+(\cc+\dd) w\|_{L_2(K)}^2,
\end{align*}
with which the proof is easily completed.
\end{proof}

\subsection{The main result} \label{the_main_result} \mbox{}

Let us fix
\begin{equation} 
\label{24}
{\bf b} \in W^1_\infty(\divv;\Omega),\, c \in W_\infty^1(\Omega) \text{ such that \eqref{14} is valid, and  } |{\bf b}|^{-1}\in L_\infty(\Omega),
\end{equation}
and with that, for any partition $\Omega_h$ of $\Omega$, the bilinear form $b_h$ given in \eqref{16}.
For any $K \in \Omega_h$, we set
$$
\bb_K:=|K|^{-1} \int_K {\bf b}\,d {\bf x}, \quad
\cc_K:=|K|^{-1} \int_K c -\divv {\bf b}\,d {\bf x},\quad
\dd_K:=|K|^{-1} \int_K \divv {\bf b}\,d {\bf x},
$$
and define ${\bf b}_h \in L_\infty(\Omega)^n$, $c_h \in L_\infty(\Omega)$, and $d_h \in L_\infty(\Omega)$ by
\begin{equation} \label{202}
{\bf b}_h|_K:=\bb_K, \quad c_h|_K:=\cc_K, \quad d_h|_K:=\dd_K \quad (K \in \Omega_{h}),
\end{equation}
with which we have defined the perturbed bilinear form $\breve{b}_h$ given in \eqref{barbhw0}--\eqref{200}.

Our subsequent analysis of the terms on the right hand side of \eqref{split}  along the strategy outlined in Section \ref{ssec:roadmap}  is guided by the following comments.
First, note that generally ${\bf b}_h \not\in W_\infty^0(\divv;\Omega)$,
  meaning that well-posedness of the corresponding variational form on the infinite dimensional level is not ensured.
 Indeed,
since for $\phi \in C_0^\infty(\Omega)$, $\int_\Omega \phi \divv {\bf b}_h \,d{\bf x}=\int_\Omega \phi \divv_h {\bf b}_h\,d{\bf x}+\int_{\partial\Omega_h} \llbracket {\bf b}_h \rrbracket \phi$, and, unless ${\bf b}_h$ is constant over $\Omega$, the right hand side cannot be bounded by a multiple of $\|\phi\|_{L_1(\Omega)}$, we have $\divv {\bf b}_h \not\in L_\infty(\Omega)$.
However, the perturbed form $\breve{b}_h$ is only applied to functions from finite dimensional spaces, which is also essential for treating the second summand in \eqref{split}.

In this latter regard, another problem is that ${\bf b}_h$ is an approximation to ${\bf b}$ that is only {\em first order accurate}.
In order to show that for a piecewise polynomial trial space, the second summand in \eqref{split} is sufficiently small relative to the first one, 
a central ingredient is to show that
for a piecewise polynomial $w_h$, $\frac{\int_\Omega ({\bf b}-{\bf b}_h)\cdot \nabla w_h}{\|w\|_{H({\bf b};\Omega_h)}}$ is sufficiently small. A combination of $\|{\bf b}-{\bf b}_h\|_{L_\infty(K)} \lesssim \diam(K)$, and the inverse inequality $|w|_{H^1(K)} \lesssim \diam(K)^{-1}\|w\|_{L_2(K)}$  shows only that this quotient is bounded.

We are going to solve this problem by considering trial spaces that are piecewise polynomial w.r.t. trial (macro-)partitions $\Omega_H$, 
such that the ratio of the local mesh sizes $h/H$ is less than some sufficiently small constant.
This will allow us also to take care of those `higher order' terms in Lemma~\ref{lem3}
which involve derivatives of $u$ and $w$. 

Specifically, let $\{\Omega_H:H \in \mathcal{I}\}$ be a family of partitions of a polyhedron $\Omega \subset \R^n$ into uniformly shape regular $n$-simplices, meaning that
\begin{equation} \label{25}
\varrho:=\sup_{H \in {\mathcal I}}\max_{K' \in \Omega_H} \varrho_{K'}<\infty.
\end{equation}
For any $H \in {\mathcal I}$, let $\Omega_h=\Omega_{h(H)}$ be a refinement of $\Omega_H$.
We set 
\begin{equation} \label{26}
\sigma:=\sup_{H \in \mathcal{I}} \max_{K' \in \Omega_H} \Big(\max_{\{K \in \Omega_{h}\colon K \subset K'\}} \frac{\diam(K)}{\diam(K')},\diam(K')\Big),
\end{equation}
which later will be assumed to be sufficiently small.
This means that we will assume that any partition $\Omega_H$ is sufficiently fine, and that the
 (minimal) {\em subgrid refinement factor} when going from any $\Omega_H$ to $\Omega_{h}$ is sufficiently large.
We consider only regular refinements $\Omega_h$ of $\Omega_H$, in the sense that
\begin{equation} \label{25b}
\bar{\varrho}:=\sup_{H \in {\mathcal I}}\max_{K \in \Omega_{h}} \varrho_K \lesssim \varrho,
\end{equation}
uniformly in $\sigma$.

Given $u,\,w \in \prod_{K \in \Omega_{h}} \cP_m(K)$, let $t=\cT(u,w),\,  {\breve{t}} \in H({\bf b}_h;\Omega_{h})$ be defined  for $K \in \Omega_{h}$
by
\begin{equation} \label{23}
t|_K:=\cT_{K,\bb_K,\cc_K,\dd_K}(u|_K,w|_{K}),\,\,
\breve{t}|_K:= {\breve{t}_K} \in \cP_{m+1}(K),
\end{equation}
 so that  $t|_K$ is the optimal local test function defined in \eqref{optimaltest} corresponding to the approximate, constant coefficients $\bb_K$, $\cc_K$, and $\dd_K$, and the replacement of the standard scalar product on $H(\bb;K)$ by $\langle\!\langle \,,\,\rangle\!\rangle_{K,\bb}$; and $\breve{t}|_K$ is its polynomial approximation defined in \eqref{near-opt}.

We can now formulate the main result of this paper.
\begin{theorem} \label{th1} 
Assume  the validity of \eqref{25}, \eqref{25b}, and \eqref{24}.
 Then there exists a $\sigma_0>0$ such that for $0<\sigma\leq \sigma_0$ (i.e., for sufficiently fine $\Omega_H$ and sufficiently large fixed subgrid refinement depth) 
 $$
(u,w) \in \U^H:=\prod_{K \in \Omega_H} \cP_m(K)\times  H_{0,\Gamma_{\!-}}({\bf b};\Omega) \cap  \prod_{K \in \Omega_H} \cP_{m}(K),
$$
and with $\breve{t}=\breve{t}(u,w) \in \prod_{K \in \Omega_h} \cP_{m+1}(K)$ as defined in \eqref{23}, it holds that
$$
b_h(u,w|_{\partial\Omega_h};\breve{t})
\gtrsim
\|(u,w)\|_\U \|\breve{t}\|_\V,
$$
where the constant depends only  on (upper bounds for)
$m$, $\varrho$, $\bar{\varrho}$, $\|{\bf b}\|_{W^1_\infty(\divv;\Omega)}$, $\||{\bf b}|^{-1}\|_{L_\infty(\Omega)}$, $\|c\|_{W^1_\infty(\Omega)}$, and 
$\|\cB^{-1}\|_{\cL(L_2(\Omega),H_{0,\Gamma_{\!-}}({\bf b};\Omega))}$.
\end{theorem}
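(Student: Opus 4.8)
The plan is to follow the roadmap of Section~\ref{ssec:roadmap}: fix $(u,w)\in\U^H$, let $t=\cT(u,w)$ and $\breve t=\breve t(u,w)$ be given cellwise by \eqref{23}, and use the splitting \eqref{split}. First I would show that the perturbed form satisfies $\breve b_h(u,w;\breve t)\gtrsim\|(u,w)\|_\U\,\|\breve t\|_\V$, then that the remainder $b_h(u,w;\breve t)-\breve b_h(u,w;\breve t)$ is bounded by $C\sigma\,\|(u,w)\|_\U\,\|\breve t\|_\V$; choosing $\sigma_0$ so small that $C\sigma_0$ lies below the constant produced in the first step, the two estimates add up to the assertion. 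Throughout, the four quantities $\|\cdot\|_\V$ (the broken $H({\bf b};\Omega_h)$-norm), the broken $H({\bf b}_h;\Omega_h)$-norm, and $\|v\|_\ast:=\big(\sum_{K\in\Omega_h}\langle\!\langle v,v\rangle\!\rangle_{K,\bb_K}\big)^{1/2}$ are, by Proposition~\ref{prop2}, Remark~\ref{rem:4.3} and a short inverse-estimate argument using \eqref{24} (which also guarantees $\diam(K)\le|\bb_K|$ for $\sigma$ small), mutually equivalent on $\prod_{K\in\Omega_h}\cP_{m+1}(K)$ with constants of the admissible type, and I will pass between them silently.

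For the decisive first step I argue cellwise and sum. Since, by construction, $t|_K$ is the Riesz representative of $v\mapsto\breve b_K(u,w;v)$ for $\langle\!\langle\cdot,\cdot\rangle\!\rangle_{K,\bb_K}$, one has $\breve b_K(u,w;\breve t_K)=\langle\!\langle t_K,\breve t_K\rangle\!\rangle_{K,\bb_K}\ge\|\breve t_K\|_{K,\bb_K}^2-\|t_K-\breve t_K\|_{K,\bb_K}\|\breve t_K\|_{K,\bb_K}$; summing over $K$ and using Cauchy--Schwarz in $\ell^2(\Omega_h)$ yields $\breve b_h(u,w;\breve t)\ge\|\breve t\|_\ast^2-\delta\|\breve t\|_\ast$ with $\delta^2:=\sum_K\|t_K-\breve t_K\|_{K,\bb_K}^2$. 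By Lemma~\ref{lem3} each summand is $\lesssim|\bb_K|^{-1}\diam(K)$ times $L_2(K)$-norms of $u,w,\partial_{\bb_K}u,\partial_{\bb_K}^2w$; since $u,w$ are polynomials on the coarse cells $K'\in\Omega_H$, Markov's inequality on $K'$ converts each differentiation into a factor $\diam(K')^{-1}$, which against the prefactor $\diam(K)$ and the local summation $\sum_{K\subset K'}\|p\|_{L_2(K)}^2=\|p\|_{L_2(K')}^2$ (with $\diam(K)\le\sigma\diam(K')$) produces a factor $\sigma$, the advective derivatives $\partial_{\bb_K}w,\partial_{\bb_K}^2w$ being treated by first relating them, up to a first-order perturbation of the field, to ${\bf b}\cdot\nabla w$ and its advective derivative, which are controlled by $\|w\|_{H({\bf b};\Omega)}$; thus $\delta\lesssim\sigma\,\|(u,w)\|_\U$. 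It remains to show $\|\breve t\|_\ast\gtrsim\|(u,w)\|_\U$. Summing Lemma~\ref{lem6} over $K$ with a fixed small $\eps$ and absorbing $-\eps\|w\|_{L_2(\Omega)}^2$ gives
\[
\|\breve t\|_\ast^2\gtrsim\sum_{K\in\Omega_h}\|\partial_{\bb_K}w+(\cc_K+\dd_K)w\|_{L_2(K)}^2+\eps\,\|u\|_{L_2(\Omega)}^2-C\sigma^2\,\|(u,w)\|_\U^2,
\]
the last term arising from the right-hand sides of Lemma~\ref{lem6} estimated by the same two-scale device. As $\cc_K+\dd_K$ and $\bb_K$ are the cell averages of $c$ and ${\bf b}$, the integrand $\partial_{\bb_K}w+(\cc_K+\dd_K)w$ differs on $K$ from $(\cB w)|_K={\bf b}\cdot\nabla w+cw$ by a term of size $\diam(K)\big(\|\nabla w\|_{L_2(K)}+\|w\|_{L_2(K)}\big)$ (because ${\bf b},c\in W^1_\infty$), which by the coarse-mesh inverse estimate and $\diam(K)\le\sigma\diam(K')$ sums to $\lesssim\sigma^2\|w\|_{L_2(\Omega)}^2$; hence $\sum_K\|\partial_{\bb_K}w+(\cc_K+\dd_K)w\|_{L_2(K)}^2\ge\tfrac12\|\cB w\|_{L_2(\Omega)}^2-C\sigma^2\|w\|_{L_2(\Omega)}^2$, and here it is essential that $w\in H_{0,\Gamma_{\!-}}({\bf b};\Omega)$, for then \eqref{14} gives $\|\cB w\|_{L_2(\Omega)}\ge\|\cB^{-1}\|^{-1}\|w\|_{H({\bf b};\Omega)}$. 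Choosing $\eps$ and then $\sigma_0$ small enough forces $\|\breve t\|_\ast^2\gtrsim\|w\|_{H({\bf b};\Omega)}^2+\|u\|_{L_2(\Omega)}^2=\|(u,w)\|_\U^2$; combined with $\delta\lesssim\sigma\|(u,w)\|_\U\lesssim\sigma\|\breve t\|_\ast$ this gives, for $\sigma$ small, $\breve b_h(u,w;\breve t)\ge\tfrac12\|\breve t\|_\ast^2\gtrsim\|(u,w)\|_\U\,\|\breve t\|_\V$.

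For \eqref{wish} I estimate $b_K-\breve b_K$ cellwise. Comparing \eqref{bhw} with \eqref{200}, its integrand pairs $u$, $w$, $\breve t_K$, $\nabla\breve t_K$, $\nabla w$ against the coefficient differences $(c-\divv{\bf b})-\cc_K$, ${\bf b}-\bb_K$, $\divv{\bf b}-\dd_K$, each $O(\diam(K))$ in $L_\infty(K)$ by ${\bf b},c\in W^1_\infty$. All resulting terms are harmless — for the ones containing $\nabla w$ one again uses $\diam(K)\|\nabla w\|_{L_2(K)}\le\diam(K)\|\nabla w\|_{L_2(K')}\lesssim\tfrac{\diam(K)}{\diam(K')}\|w\|_{L_2(K')}\le\sigma\|w\|_{L_2(K')}$ — except the term pairing $({\bf b}-\bb_K)\cdot\nabla\breve t_K$ with $w-u$, where $\|{\bf b}-\bb_K\|_{L_\infty(K)}\lesssim\diam(K)$ against $\|\nabla\breve t_K\|_{L_2(K)}$ is only bounded. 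Here I exploit the explicit form \eqref{near-opt}: decompose $\nabla\breve t_K$ into its $\bb_K$-component $|\bb_K|^{-2}(\partial_{\bb_K}\breve t_K)\bb_K$, which is $\le|\bb_K|^{-1}\|\breve t_K\|_{H(\bb_K;K)}$ and whose prefactor $|\bb_K|^{-1}\diam(K)$ is $\lesssim\sigma$, and its transverse component, which by \eqref{near-opt} is a linear combination of $\nabla_{\bf y}\bar x_{\!-}$ — bounded by \eqref{104} — and first and second derivatives of the polynomials $u,w$ on $\bar K$; using $\diam(\bar K)\lesssim\diam(K)$ from \eqref{108}, polynomial norm equivalence between $\bar K$ and $K$, and once more the coarse-mesh inverse estimate, this transverse part contributes $O(\sigma)\|(u,w)\|_\U$. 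Summing, $|b_h(u,w;\breve t)-\breve b_h(u,w;\breve t)|\le C\sigma\,\|(u,w)\|_\U\,\|\breve t\|_\V$, and fixing $\sigma_0$ with $C\sigma_0$ below the constant of the previous paragraph completes the proof.

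I expect the main obstacle to be precisely the two occurrences of the "$\diam^{-1}$ cancels $\diam$" phenomenon: ${\bf b}_h$ is only a first-order approximation of ${\bf b}$, so $\|{\bf b}-{\bf b}_h\|_{L_\infty}\sim\diam$ exactly offsets the inverse inequality on the \emph{fine} mesh and leaves nothing small. Both remedies must be implemented with care: (a) the two-scale refinement, which replaces the lost factor by the controllable ratio $\diam(K)/\diam(K')\le\sigma$ whenever a derivative falls on a coarse-mesh trial polynomial, and, for advective derivatives of $w$, the observation that ${\bf b}\cdot\nabla w$ already lies in $\|\cdot\|_\U$ so that no full inverse estimate is needed on it; and (b) the construction of $\breve t_K$ on the enlarged simplex $\bar K$ — in contrast to the exact $t_K$ of \eqref{optimaltest}, whose gradient involves the uncontrolled $\nabla_{\bf y}x_{\!\pm}$, the polynomial $\breve t_K$ has transverse gradient governed only by the uniformly bounded $\nabla_{\bf y}\bar x_{\!-}$. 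A secondary, bookkeeping-type difficulty is the order of quantifiers: $\eps$ in Lemma~\ref{lem6} must be chosen depending only on the admissible data bounds and $\sigma_0$ afterwards depending on $\eps$ and those bounds, so that the final constant has the stated dependence.
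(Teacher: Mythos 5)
Your proposal reproduces the paper's own strategy: you derive inline exactly the three estimates of Lemma~\ref{lem4} (lower bound $\|\breve t\|\gtrsim\|(u,w)\|_\U$, smallness of $\|t-\breve t\|$ via Lemma~\ref{lem3}, and $\sigma$-smallness of $\sum_K\diam(K)^2\|\breve t\|_{H^1(K)}^2$ exploiting \eqref{near-opt} and \eqref{104}), combined with the splitting \eqref{split}, the norm equivalences of Proposition~\ref{prop2}/Remark~\ref{rem:4.3}, and the two-scale device, which is precisely how the paper completes the proof. The only place your sketch is looser than the paper is the treatment of $\partial_{\bb_K}^2 w$ in the $\delta$-estimate: since $\partial_{\bb}w$ is not controlled pointwise by $\|w\|_{H({\bf b};\Omega)}$, the paper needs the additional split $\partial_{\bb_K}^2 w=\partial_{\bb_K}(\partial_{\bb_K}-\partial_{\bb_{K'}})w+\partial_{\bb_K}\partial_{\bb_{K'}}w$, handling the first piece by a \emph{fine}-mesh inverse estimate whose $\diam(K)^{-2}$ is absorbed by the prefactor and the second by the coarse-mesh argument after noting $\partial_{\bb_{K'}}w$ is piecewise polynomial on $\Omega_H$; your phrase ``its advective derivative, controlled by $\|w\|_{H({\bf b};\Omega)}$'' compresses this nontrivial step, but the intent is consistent with the paper.
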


The remainder of this section is devoted to the proof of this theorem.  
We begin with collecting some simple frequently needed technical preliminaries.

Obviously, we have
$$
\|c_h\|_{L_\infty(\Omega)} \leq \|c-\divv {\bf b}\|_{L_\infty(\Omega)},\quad\|d_h\|_{L_\infty(\Omega)} \leq \|\divv {\bf b}\|_{L_\infty(\Omega)}.
$$
Moreover, for any $n$-simplex $K \subset \Omega$, it holds that
\begin{equation} \label{20}
\begin{split}
\|\cc_K -(c-\divv {\bf b}))\|_{L_\infty(K)} &\lesssim   \diam(K)  |c-\divv {\bf b}|_{W_\infty^1(K)},\\
\|\dd_K -\divv {\bf b}\|_{L_\infty(K)} &\lesssim   \diam(K)  |\divv {\bf b}|_{W_\infty^1(K)},\\
\| |\bb_K-{\bf b}|\|_{L_\infty(K)} &\leq D   \diam(K)  |{\bf b}|_{W_\infty^1(K)^n},
\end{split}
\end{equation}
where, as the constant in the first two inequalities, $D>0$ is some constant depending only on $n$,
which we name for use in \eqref{102} below. 

In particular, we let
\begin{equation} \label{111}
\bar{\sigma}>0
\end{equation}
be such that for any $0 <\sigma\leq \bar{\sigma}$ and $H  \in {\mathcal I}$, $\Omega_{h}$ is sufficiently fine to ensure that
\begin{equation} \label{100}
 \diam(K)\, \||{\bf b}|^{-1}\|_{L_\infty(K)} \max\big(1,D |{\bf b}|_{W_\infty^1(K)^n}\big) \leq {\textstyle \frac{1}{2}} \quad (K \in \Omega_{h}).
\end{equation}
 Then for any $K \in \Omega_h$, we have
\begin{equation} \label{102}
\begin{split}
|\bb_K| &\geq \||{\bf b}|^{-1}\|_{L_\infty(K)}^{-1}-\||\bb_{K}-{\bf b}|\|_{L_\infty(K)}  \\
&\geq \||{\bf b}|^{-1}\|_{L_\infty(K)}^{-1} -D \diam(K) |{\bf b}|_{W_\infty^1(K)^n}\\
&\geq {\textstyle \frac{1}{2}}  \||{\bf b}|^{-1}\|_{L_\infty(K)}^{-1} \geq \max\big({\textstyle \frac{1}{2}}  \||{\bf b}|^{-1}\|_{L_\infty(\Omega)}^{-1},\diam(K)
\big),
\end{split}
\end{equation}
where we have used \eqref{100}.

Finally, for $H \in {\mathcal I}$, $K \in \Omega_H \cup \Omega_h$, and $k \geq \ell \in \N_0$, we will make repeated use of the {\em inverse inequality}
$$
|\cdot|_{H^k(K)} \lesssim \diam(K)^{-(k-\ell)}\|\cdot\|_{H^\ell(K)}\quad \text{on } \cP_m(K),
$$
where the constant depends only   on $m$, $\varrho$,  $\bar{\varrho}$, and $k$.\\

The main technical ingredients needed to prove Theorem \ref{th1} are collected in the following lemma.

\begin{lemma}
 \label{lem4}
Assume \eqref{25}, \eqref{25b}, and \eqref{24}. Then there exists a $0 < \sigma_0 \leq \bar{\sigma}$ (cf. \eqref{111}), such that for any
$\sigma \leq \sigma_0$, one has for all $(u,w) \in \prod_{K \in \Omega_H} \cP_m(K) \times   \prod_{K \in \Omega_H} \cP_{m}(K)   \cap H_{0,\Gamma_{\!-}}({\bf b};\Omega))$
\begin{align}
\label{mainest}
\begin{split}
& \|\breve{t}\|_{H({\bf b}_h;\Omega_{h})} \gtrsim \|(u,w)\|_\U, \quad \|t-\breve{t}\|_{H({\bf b}_h;\Omega_{h})} \lesssim \sigma \|\breve{t}\|_{H({\bf b}_h;\Omega_{h})},\\[1mm]
&\sum_{K \in \Omega_h} \diam(K)^2 \|\breve{t}|_K\|^2_{H^1(K)} \lesssim \sigma^2 \|\breve{t}\|^2_{H({\bf b}_h;\Omega_{h})},
\end{split}
\end{align}
where the constants depend 
only on (upper bounds for)
$m$, $\varrho$, $\bar{\varrho}$, $\|{\bf b}\|_{W^1_\infty(\divv;\Omega)}$, $\||{\bf b}|^{-1}\|_{L_\infty(\Omega)}$, $\|c\|_{W^1_\infty(\Omega)}$, and 
$\|B^{-1}\|_{\cL(L_2(\Omega),H_{0,\Gamma_{\!-}}({\bf b};\Omega))}$.
\end{lemma}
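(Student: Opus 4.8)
The plan is to establish the three estimates in \eqref{mainest} in sequence, using the two auxiliary Lemmas~\ref{lem3} and~\ref{lem6} together with the perturbation estimates \eqref{20}, \eqref{100}, \eqref{102} and the inverse inequality, and carefully bookkeeping which quantities carry a factor $\sigma$ (equivalently $\diam(K)/\diam(K')$ or $\diam(K')$). The crucial structural point is that $u$ and $w$ are polynomials with respect to the \emph{macro}-partition $\Omega_H$, so that on any subcell $K\subset K'\in\Omega_H$ the ``higher-order'' terms $\partial_\bb u$, $\partial_\bb^2 w$ appearing in Lemmas~\ref{lem3} and~\ref{lem6} can, via the inverse inequality, be bounded by $\diam(K')^{-1}\|u\|_{L_2(K')}$ and $\diam(K')^{-2}\|w\|_{L_2(K')}$ respectively. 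Since the prefactor is $|\bb_K|^{-1}\diam(K)\lesssim \diam(K)$ (by \eqref{102}), these terms gain a net factor $\diam(K)/\diam(K')\le\sigma$ (or $\diam(K')\le\sigma$ for the lower-order contributions). Summation over $K\subset K'$ and then over $K'\in\Omega_H$ is harmless because $\|\partial_\bb u\|^2_{L_2(K')}=\sum_{K\subset K'}\|\partial_\bb u\|^2_{L_2(K)}$, and shape regularity \eqref{25b} keeps the constants uniform.

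For the first estimate $\|\breve t\|_{H(\bb_h;\Omega_h)}\gtrsim\|(u,w)\|_\U$, I would sum the local bound of Lemma~\ref{lem6} over all $K\in\Omega_h$. The bracketed main term on the left of \eqref{lower} controls, after choosing $\eps$ small but fixed and summing, a fixed fraction of $\sum_K\big(\|\partial_\bb w+(\cc_K+\dd_K)w\|^2_{L_2(K)}+\|u\|^2_{L_2(K)}\big)$ minus $\eps\|w\|^2_{L_2(\Omega)}$, while the right-hand side of \eqref{lower} sums (using the inverse-inequality trick above) to $\lesssim\sigma^2\|(u,w)\|_\U^2$. It remains to see that $\sum_K\|\partial_\bb w+(\cc_K+\dd_K)w\|^2_{L_2(K)}$ plus $\|u\|^2_{L_2(\Omega)}$ dominates $\|(u,w)\|_\U^2=\|u\|^2_{L_2(\Omega)}+\|w\|^2_{H(\bb;\Omega)}$; here one replaces $\cc_K+\dd_K$ by $c$ and $\bb_K$ by $\bb$ at the cost of $O(\sigma)$ using \eqref{20}, so that $\sum_K\|\partial_\bb w+(\cc_K+\dd_K)w\|^2_{L_2(K)}$ is, up to $O(\sigma)$ and the $\eps\|w\|^2_{L_2(\Omega)}$ slack, comparable to $\|\bbf\cdot\nabla w+cw\|^2_{L_2(\Omega)}=\|\cB w\|^2_{L_2(\Omega)}$, and then the \emph{assumed} well-posedness \eqref{14}, i.e.\ $\|w\|_{H(\bbf;\Omega)}\le\|\cB^{-1}\|\,\|\cB w\|_{L_2(\Omega)}$, closes the loop. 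Absorbing the $\eps\|w\|^2$ and $O(\sigma)$ terms into the left side by taking $\eps$, then $\sigma_0$, small enough finishes this part. The second estimate $\|t-\breve t\|_{H(\bb_h;\Omega_h)}\lesssim\sigma\|\breve t\|$ is immediate: sum the bound of Lemma~\ref{lem3} (squared) over $K$, apply the inverse-inequality reduction to turn the derivative terms into a $\sigma^2\|(u,w)\|_\U^2$ contribution, and invoke the already-proven first estimate.

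The third estimate is an inverse-estimate bookkeeping: $\breve t_K$ is a polynomial of degree $m+1$ on $K$, so $\diam(K)^2|\breve t_K|^2_{H^1(K)}\lesssim\|\breve t_K\|^2_{L_2(K)}$, which already gives $\sum_K\diam(K)^2\|\breve t_K\|^2_{H^1(K)}\lesssim\|\breve t\|^2_{L_2(\Omega)}\le\|\breve t\|^2_{H(\bb_h;\Omega_h)}$ — but \emph{without} the factor $\sigma^2$. To gain the extra $\sigma^2$ one must instead look at the explicit formula \eqref{near-opt} for $\breve t_K$ and note that every term in $|\bb|\breve t_K$ is (using \eqref{104}, the bound $|\bar x_-|_{W^1_\infty(\bar K)}\lesssim 1$) bounded in $L_2(K)$ by $\diam(K)$ times polynomial data on $K'$, so by the inverse inequality and $|\bb_K|^{-1}\lesssim 1$ one gets $\|\breve t_K\|_{L_2(K)}\lesssim\diam(K)\big(\|u\|_{L_2(K')}+\|w\|_{L_2(K')}+\diam(K')^{-1}\|w\|_{L_2(K')}\big)$, hence after summation $\sum_K\diam(K)^2\|\breve t_K\|_{H^1(K)}^2\lesssim\sum_{K'}\diam(K)^2\diam(K')^{-2}(\dots)\lesssim\sigma^2\|(u,w)\|_\U^2\lesssim\sigma^2\|\breve t\|^2_{H(\bb_h;\Omega_h)}$ by the first estimate. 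The main obstacle I anticipate is precisely this interplay in the first estimate between the three competing scales — the genuine coercive term from Lemma~\ref{lem6}, the artificial slack $\eps\|w\|^2$, and the $O(\sigma)$ errors from replacing the cell-averaged coefficients by the true ones — and making sure they can be reconciled by choosing $\eps$ first (depending only on $\|\cB^{-1}\|$ and the fixed constants) and then $\sigma_0$ small enough, uniformly over the whole family of partitions; getting the chain $\text{Lemma~\ref{lem6}}\Rightarrow\|\cB w\|_{L_2}\Rightarrow\|w\|_{H(\bbf;\Omega)}$ to run while keeping all constants dependent only on the listed data is the delicate point.
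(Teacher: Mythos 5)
Your high-level plan for the first two estimates matches the paper: sum Lemma~\ref{lem6} over $K\in\Omega_h$, use the coarse-grid polynomial structure together with the inverse inequality to absorb the right-hand side into a factor $\sigma^2\|(u,w)\|_\U^2$, freeze the coefficients $\bb_K,\cc_K,\dd_K$ back to $\bbf,c,\divv\bbf$ at cost $O(\sigma)$, invoke the well-posedness assumption \eqref{14} to dominate $\|w\|_{H(\bbf;\Omega)}$, and then fix $\eps$ and shrink $\sigma_0$. The second estimate, as a corollary of Lemma~\ref{lem3} plus the first estimate, is also as in the paper. However, there are two concrete places where your bookkeeping does not close.

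First, your treatment of the $\|\partial_{\bb_K}^2 w\|_{L_2(K)}$ term on the right of \eqref{lower} is too crude. You propose to bound $\|\partial_\bb^2 w\|$ by $\diam(K')^{-2}\|w\|_{L_2(K')}$ via a double inverse inequality. Multiplying by the prefactor $|\bb_K|^{-1}\diam(K)\lesssim\sigma\diam(K')$ and summing, this gives $\sigma^2\sum_{K'}\diam(K')^{-2}\|w\|^2_{L_2(K')}$, which has \emph{no} uniform bound in terms of $\|(u,w)\|_\U^2=\|u\|^2_{L_2}+\|w\|^2_{H(\bbf;\Omega)}$ as $\diam(K')\to 0$: you lose one power of $\diam(K')$ that cannot be recovered, because there is no inequality relating the full $H^1(K')$ seminorm of $w$ to $\|w\|_{H(\bbf;K')}$. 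The paper avoids this by splitting $\partial_{\bb_K}^2 w = \partial_{\bb_K}(\partial_{\bb_K}-\partial_{\bb_{K'}})w + \partial_{\bb_K}\partial_{\bb_{K'}}w$. The first piece gains $\|\bb_K-\bb_{K'}\|_{L_\infty}\lesssim\diam(K')$ before any inverse inequality, and the second piece is bounded, after a single inverse inequality on $K'$, by $\diam(K')^{-1}\|\partial_{\bb_{K'}}w\|_{L_2(K')}$ — note the \emph{directional} derivative, which by \eqref{wb} is $\lesssim\|w\|_{H(\bbf;K')}$; both routes deliver exactly one factor $\diam(K')$ paired with $\|w\|_{H(\bbf;K')}$ rather than $\diam(K')^{-1}\|w\|_{L_2(K')}$.

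Second, for the third estimate your claimed bound $\|\breve{t}_K\|_{L_2(K)}\lesssim\diam(K)\big(\|u\|_{L_2(K')}+\|w\|_{L_2(K')}+\diam(K')^{-1}\|w\|_{L_2(K')}\big)$ is simply false: in \eqref{near-opt} the second line, $\partial_{\bb}w(\bar x_-,\by)+\cc u(\bar x_-,\by)+\dd w(\bar x_-,\by)$, is $O(1)$ and carries no small factor of $\diam(K)$. So $\|\breve t_K\|_{L_2(K)}$ is not small, and the inverse inequality $\diam(K)|\breve t_K|_{H^1(K)}\lesssim\|\breve t_K\|_{L_2(K)}$ alone cannot produce the needed $\sigma^2$. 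The paper's actual argument splits $\breve t=\breve t_1+\breve t_2+\breve t_3$ and treats each piece differently: $\breve t_1$ (the part containing the factor $x-\bar x_-(\by)$) vanishes on $\partial\bar K_-$, so an inverse inequality together with Proposition~\ref{prop2} yields $\|\breve t_1|_K\|_{H^1(K)}\lesssim\|\breve t|_K\|_{H(\bb_K;K)}$ with no loss, and the factor $\diam(K)^2\leq\sigma^2$ does the rest; for $\breve t_2$ and $\breve t_3$ (which are constant along $\bb_K$), the $H^1$-norm is bounded through $|K|^{1/2}\|\cdot\|_{W^1_\infty(K)}$ using the Lipschitz bound \eqref{104} on $\bar x_-$, and then an inverse inequality on the \emph{macro}-cell $K'$ converts $\diam(K)^2\|p\|^2_{W^1_\infty(K')}\leq\sigma^2\diam(K')^2\|p\|^2_{W^1_\infty(K')}\lesssim\sigma^2\|p\|^2_{L_\infty(K')}$ — again gaining the $\sigma^2$ from the mismatch of scales $\diam(K)/\diam(K')$, not from smallness of $\breve t_K$ itself. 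Without these two splits, the third estimate and hence the needed factor $\sigma^2$ do not follow.
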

We defer the proof of this lemma to the end of this section and show first how it is used to complete the proof of Theorem~\ref{th1}
 following steps (i) and (ii) announced in Sect.~\ref{ssec:roadmap}.

\begin{proof}[Proof of Theorem~\ref{th1}] For the selection of ${\bf b}_h$, $c_h$ and $d_h$ from \eqref{202}, the perturbed bilinear form
on $(L_2(\Omega)\times H({\bf b};\Omega)) \times H({\bf b}_h;\Omega_{h})$, first mentioned in \eqref{barbhw0}--\eqref{200}, reads as
\begin{align*}
\breve{b}_h(u,w;v)&:=\int_\Omega (c_h v-{\bf b}_h \cdot \nabla_h v)u+d_h v w \,d{\bf x}+\int_{\partial \Omega_{h}} \llbracket v {\bf b}_h\rrbracket w\,d{\bf s}\\
&
=\sum_{K \in \Omega_{h}} \int_K \cc_K v u+(w-u) \bb_K \cdot \nabla v+v \bb_K \cdot \nabla  w+\dd_K v w \,d {\bf x}.
\end{align*}
 Recall from \eqref{18} that the optimal test function $t$, defined in   \eqref{23}, was constructed such that
$$
\sum_{K \in \Omega_h}
\langle\!\langle t|_K,v|_K \rangle\!\rangle_{K,\bb_K}=\breve{b}_h(u,w;v) \quad (v \in H({\bf b}_h;\Omega_{h})).
$$
Therefore, since for  $\sigma \leq \bar{\sigma}$, $\diam(K) \leq |\bb_K|$ by \eqref{102}, upon taking $\sigma_0 \leq \bar{\sigma}$, Proposition~\ref{prop2} applies and Remark \ref{rem:4.3}
ensures 
that
\begin{equation} \label{30}
\|t\|^2_{H({\bf b}_h;\Omega_h)} \eqsim \sum_{K\in \Omega_h} \langle\!\langle t|_K,t|_K \rangle\!\rangle_{K,\bb_K}=\breve{b}_h(u,w;t).
\end{equation}

For $(u,w) \in \U^H$, applying the inverse inequality in combination with  \eqref{20}, shows that  $\|(\bb_K-{\bf b}|_K) \cdot \nabla \breve{t}|_K\|_{L_2(K)} \lesssim \|\breve{t}|_K\|_{L_2(K)}$ so that
\begin{equation} \label{27}
\|\breve{t}\|_{H({\bf b}_h;\Omega_h)} \eqsim \|\breve{t}\|_{\V}.
\end{equation}

{For $\sigma_0>0$ sufficiently small, the second inequality in Lemma~\ref{lem4} gives $\|\breve t\|_{H(\bbf_h;\Omega_h)} \eqsim \|t\|_{H(\bbf_h;\Omega_h)}$.
We infer that
\begin{align*}
\breve{b}_h(u,w;t) \eqsim \|\breve{t}\|^2_{H({\bf b}_h;\Omega_h)} \eqsim \|\breve{t}\|_{H({\bf b}_h;\Omega_h)}\|\breve{t}\|_{\V}
\gtrsim \|(u,w)\|_\U \|\breve{t}\|_{\V},
\end{align*}
by  the first inequality in Lemma~\ref{lem4}.}

Since $\breve{b}_h$ is bounded on $\U \times H({\bf b}_h;\Omega_{h})$, uniformly in $h$, we have
\begin{equation} \label{29}
 |\breve{b}_h(u,w;\breve{t})-\breve{b}_h(u,w;t)|  \lesssim\|(u,w)\|_\U \|t-\breve{t}\|_{H({\bf b}_h;\Omega_h)}  \lesssim \sigma \|(u,w)\|_\U \|\breve{t}\|_{\V},
\end{equation}
where we have again used the second inequality in Lemma~\ref{lem4} and \eqref{27}. We conclude that for $\sigma\leq \sigma_0$ sufficiently small, 
$$
\breve{b}_h(u,w;\breve{t}) 
\gtrsim \|(u,w)\|_\U \|\breve{t}\|_{\V},
$$
which is step (i) from Sect.~\ref{ssec:roadmap}.

As for step (ii), we have for $(u,w) \in \U$ 
 $$
b_h(u,w|_{\partial\Omega_h};v):=\sum_{K \in \Omega_{h}} \int_K (c-\divv {\bf b}) v u+(w-u) {\bf b} \cdot \nabla v+v {\bf b} \cdot \nabla  w+ v w \divv {\bf b}\,d {\bf x}.
$$
Applying \eqref{20} and subsequently  the third inequality of \eqref{mainest} in Lemma~\ref{lem4},
we obtain  for $(u,w) \in \U^H$ 
\begin{equation} \label{28}
\begin{split}
& |b_h(u,w|_{\partial\Omega_h};\breve{t})-\breve{b}_h(u,w;\breve{t})|\\
& \lesssim \sum_{K \in \Omega_{h}} \diam(K) 
\big[
\|(u,w)\|_\U \|\breve{t}\|_{H^1(K)}
+\|\breve{t}\|_{L_2(K)} \|w\|_{H^1(K)}
\big]\\
& \lesssim \|(u,w)\|_\U \sqrt{\sum_{K \in \Omega_{h}} \diam(K)^2\|\breve{t}\|_{H^1(K)}^2}
\\&\hspace*{10em}+\|\breve{t}\|_{L_2(\Omega)} \sigma \sqrt{\sum_{K' \in \Omega_H} \diam(K')^2 \|w\|_{H^1(K')}^2}\\
& \lesssim \sigma \|(u,w)\|_\U \|\breve{t}\|_{H({\bf b}_h;\Omega_h)}
\lesssim \sigma \|(u,w)\|_\U\|\breve{t}\|_{\V}
\end{split}
\end{equation}
where we have applied
 the inverse inequality to $w|_{K'}$ for $K' \in \Omega_H$, and, finally \eqref{27}.
Estimate \eqref{28} is step (ii) from Sect.~\ref{ssec:roadmap} which, together with step (i)  completes the proof of Theorem \ref{th1}.
\end{proof}

\noindent
{\it Proof of Lemma \ref{lem4}:}
To show the first inequality in \eqref{mainest}, we will sum over $K \in \Omega_h$ the inequality \eqref{lower}   in Lemma~\ref{lem6}.
We start with showing below in \eqref{101} that the resulting right-hand side can be made small enough.
To exploit that $u$ and $w$ are piecewise polynomial w.r.t. the `coarse grid' $\Omega_H$, we collect all $K \in \Omega_h$ that are contained in one $K'\ \in \Omega_H$. 

To arrive at \eqref{101} we need, in particular, to get rid of the derivatives of $u$ and to switch from $\|w\|_{H(\bb;\Omega)}$ to $\|w\|_{H(\bbf;\Omega)}$.
To this end, an easy consequence of the third estimate in \eqref{20}  is $\||\bb_K|\|_{L_\infty(K)} \lesssim \|{\bf b}\|_{W^1_\infty(K)^n}$ for $K \in \Omega_h$.
Together with an application of the inverse inequality on $K' \in \Omega_H$, this shows that
\be
\label{first}
\sum_{\{K \in \Omega_h\colon K \subset K'\} } 
\|\partial_{\bb_K} u\|^2_{L_2(K)} \lesssim |u|_{H^1(K')}^2 \lesssim \diam(K')^{-2} \|u\|_{L_2(K')}^2,
\ee
with a constant depending on $m$, $\varrho$, and $\|{\bf b}\|_{W^1_\infty(\Omega)^n}$. Next, combining again the third inequality in \eqref{20}
with an inverse estimate on $K'' \in \{K,K'\}$ yields
\be
\label{wb}
\begin{split}
\|\partial_{\bb_{K''}}w\|^2_{L_2({K''})} & \le  2\big\{ \|\partial_{{\bb}}w\|^2_{L_2(K'')}   + \|(\bbf- \bb_{K''})\cdot\nabla w\|^2_{L_2(K'')}  \big\}
\\ & \lesssim  \| w\|_{H(\bbf;K'')}^2,
\end{split}
\ee
with a constant depending on $\rho$ or $\bar\rho$, and on $m,D,\|\bbf\|_{W^1_\infty({\Omega})^n}$.

The terms $\|\partial_{\bb_K}^2w\|_{L_2(K)}^2$ require a little more care  than $\|\partial_{\bb_K} u\|_{L_2(K)}^2$ since unlike $u$, $\partial_{\bb_K} w$ is generally not piecewise polynomial w.r.t. $\Omega_H$. 
 
Therefore, we first use that for $\Omega_h \ni K \subset K' \in \Omega_H$,
$$
\|\partial_{\bb_K}^2w\|^2_{L_2(K)} \le 2\big\{\|\partial_{\bb_K} (\partial_{\bb_{K}}-\partial_{\bb_{K'}}) w\|^2_{L_2(K)}+\|\partial_{\bb_K} \partial_{\bb_{K'}}w\|^2_{L_2(K)}\big\}
$$
For the second term on the right an application of \eqref{first} with $u$ reading as $\partial_{\bb_{K'}}w$ shows that 
\begin{align}
\label{second}
\sum_{\{K \in \Omega_h\colon K \subset K'\} } 
\|\partial_{\bb_K} \partial_{\bb_{K'}}w\|^2_{L_2(K)} & \lesssim \diam(K')^{-2} \|\partial_{\bb_{K'}}w\|_{L_2(K')}^2\nonumber   \\
& 
 \lesssim \diam(K')^{-2}\|w\|_{H({\bf b};K')}^2,
\end{align}
where we have used \eqref{wb} for $K''=K'$.
For the first term on the right we derive that
\begin{align}
\label{third}
&\|\partial_{\bb_K} (\partial_{\bb_{K}}-\partial_{\bb_{K'}}) w\|^2_{L_2(K)} =
\|(\partial_{\bb_{K}}-\partial_{\bb_{K'}})\partial_{\bb_K} w\|^2_{L_2(K)}\nonumber \\
& \lesssim \diam(K')^2 |\partial_{\bb_K} w|^2_{H^1(K)} 
\lesssim {  \frac{\diam(K')^2}{\diam(K)^2}} \|\partial_{\bb_K}w\|^2_{L_2(K)}\nonumber\\
&  \lesssim { \frac{\diam(K')^2}{\diam(K)^2}} \|w\|^2_{H({\bf b};K)},
\end{align}
where both \eqref{second} and \eqref{third} depend on $m$, $\varrho$, $\bar{\varrho}$, and $\|{\bf b}\|_{W^1_\infty(\Omega)^n}$.

By combining these {four} estimates \eqref{first}, {\eqref{wb} for $K''=K$}, \eqref{second}, \eqref{third}, and using $|\bb_K|^{-1} \leq 2\||{\bf b}|^{-1}\|_{L_\infty(\Omega)}$ (\eqref{102}), $\diam(K) \leq \sigma \diam(K')$, and $\diam(K') \leq \sigma$, we infer that
\begin{align*}
\sum_{\{K \in \Omega_h\colon K \subset K'\} } &\hspace*{-1em}
{\textstyle \frac{\diam(K)^2}{|\bb_K|^2}}\Big[\|u\|^2_{L_2(K)}+\|w\|^2_{H(\bb_K;K)}+\|\partial_{\bb_K} u\|^2_{L_2(K)}+\|\partial_{\bb_K}^2 w\|^2_{L_2(K)} \Big]\\
&\lesssim \sigma^2 \Big[\|u\|^2_{L_2(K')}+\|w\|^2_{H({\bf b};K')}\Big],
\end{align*}
and so
\begin{equation} \label{101}
\begin{split}
\sum_{K \in \Omega_h} &
{\textstyle \frac{\diam(K)^2}{|\bb_K|^2}}\Big[\|u\|^2_{L_2(K)}+\|w\|^2_{H(\bb_K;K)}+\|\partial_{\bb_K} u\|^2_{L_2(K)}+\|\partial_{\bb_K}^2 w\|^2_{L_2(K)} 
\Big]\\
&\lesssim \sigma^2 \|(u,w)\|_\U^2,
\end{split}
\end{equation}
where the constant depends on $m$, $\varrho$, $\bar{\varrho}$, $\|{\bf b}\|_{W^1_\infty(\Omega)^n}$, and $\||{\bf b}|^{-1}\|_{L_\infty(\Omega)}$.

To treat next the terms on the left hand side of \eqref{lower}
analogous arguments, preceded by applications of the triangle inequality, show that
\begin{align*}
&\Big| \|{\bf b}_h \cdot \nabla_h w+(c_h+d_h)w\|_{L_2(\Omega)}-\|{\bf b}\cdot \nabla w + cw\|_{L_2(\Omega)}
\Big|^2 \\
&\leq \sum_{K' \in \Omega_H}
\sum_{\{K \in \Omega_h\colon K \subset K'\} }\|(\bb_K-{\bf b})\cdot \nabla w+(\cc_K+\dd_K-c)w\|_{L_2(K)}^2\\
&\lesssim \sum_{K' \in \Omega_H} \sigma^2 \|w\|^2_{L_2(K')}=\sigma^2 \|w\|^2_{L_2(\Omega)},
\end{align*}
which, upon using 
 $\big|\|f\|^2-\|g\|^2\big|\leq \big|\|f\|-\|g\|\big|\big(2\|g\|+\big|\|f\|-\|g\|\big|\big)$, 
yields
\begin{equation} \label{105}
\begin{split}
&\Big| \|{\bf b}_h \cdot \nabla_h w+(c_h+d_h)w\|^2_{L_2(\Omega)}-\|{\bf b}\cdot \nabla w + cw\|^2_{L_2(\Omega)}
\Big|\\
& \lesssim \sigma \|w\|_{L_2(\Omega)}\big[(\|{\bf b}\cdot \nabla w + cw\|_{L_2(\Omega)}+\sigma \|w\|_{L_2(\Omega)}\big] 
 \lesssim \sigma \|w\|_{H({\bf b};\Omega)}^2
\end{split}
\end{equation}
dependent on $m$, $\varrho$, $\bar{\varrho}$, $\|{\bf b}\|_{W^1_\infty(\divv;\Omega)}$, and $\|c\|_{W^1_\infty(\Omega)}$.

Now by summing the inequality \eqref{lower} in Lemma~\ref{lem6} over $K \in \Omega_h$, substituting the estimates \eqref{101} and \eqref{105}, and using that
$$
\|w\|_{H({\bf b};\Omega)}\leq \|\cB^{-1}\|_{\cL(L_2(\Omega),H_{0,\Gamma_{\!-}}({\bf b};\Omega))}  \|{\bf b} \cdot \nabla w+c w\|_{L_2(\Omega)},
$$
for any $\eps>0$ we arrive at 

\begin{align*}
&\|\breve{t}\|_{H({\bf b}_h;\Omega_{h})}^2 - \Big[{\textstyle \frac{\|\cB^{-1}\|_{\cL(L_2(\Omega),H_{0,\Gamma_{\!-}}
({\bf b};\Omega))}^{-2}}{2+4
{\|c -\divv \bbf\|_{L_\infty(\Omega)}^2}
}} \|w\|_{H({\bf b};\Omega)}^2+{\textstyle \frac{\eps}{2+8 \eps}} \|u\|_{L_2(\Omega)}^2-\eps\|w\|_{L_2(\Omega)}^2\Big] \\
&\gtrsim
-\sigma^2 \|u\|_{L_2(\Omega)}^2-\sigma\|w\|_{H({\bf b};\Omega)}^2,
\end{align*}
with a constant depending on $m$, $\varrho$, $\|{\bf b}\|_{W^1_\infty(\divv;\Omega)}$, $\||{\bf b}|^{-1}\|_{L_\infty(\Omega)}$, and $\|c\|_{W^1_\infty(\Omega)}$.
By selecting $\eps$ and, subsequently, $\sigma_0$ small enough, the proof of the first estimate in \eqref{mainest} is completed.
\medskip

Lemma~\ref{lem3} in combination with \eqref{101} shows that
$$
\|t-\breve{t}\|_{H({\bf b}_h;\Omega_{h})} \lesssim \sigma \|(u,w)\|_\U.
$$
Now the second estimate follows from the first.
\medskip

To prove the last estimate, we split $\breve{t}=\breve{t}_1+\breve{t}_2+\breve{t}_3$ (see \eqref{near-opt}), where, for $K' \in \Omega_H$, $K \in \Omega_h$ with $K \subset K'$,
\begin{align*}
\breve{t}_1|_K(x,{\bf y})& :=|\bb_K|^{-1} \Big(w(\bar{x}_{\!-}({\bf y}),{\bf y})-u(\bar{x}_{\!-}({\bf y}),{\bf y}) \Big)\Big(x-\bar{x}_{\!-}({\bf y})\Big),\\
\breve{t}_2|_K(x,{\bf y})& :=\partial_{\bb_{K'}} w(\bar{x}_{\!-}({\bf y}),{\bf y}) + \cc_K u(\bar{x}_{\!-}({\bf y}),{\bf y})+\dd_K w(\bar{x}_{\!-}({\bf y}),{\bf y})\\
\breve{t}_3|_K(x,{\bf y})& :=(\bb_K-\bb_{K'})\cdot \nabla w(\bar{x}_{\!-}({\bf y}),{\bf y}).
\end{align*}

Since $\breve{t}_1|_K \in \cP_{m+1}(K)$ vanishes on $\partial \bar{K}_{\!-}$, the inverse inequality, Proposition~\ref{prop2} and \eqref{108} show that
\begin{equation} \label{103}
\begin{split}
\|\breve{t}_1|_K\|_{H^1(K)} & \lesssim  \diam(K)^{-1} \|\breve{t}_1|_K\|_{L_2(K)} 
\leq \diam(K)^{-1}  \|\breve{t}_1|_K\|_{L_2(\bar{K})} \\
& \lesssim {\textstyle \frac{\diam(\bar{K})}{\diam(K)}}
 \|\partial_{\bb_K} \breve{t}_1|_K\|_{L_2(\bar{K})} \lesssim  \|\partial_{\bb_K}\breve{t}|_K\|_{L_2(K)} \leq  \|\breve{t}|_K\|_{H(\bb_K;K)},
\end{split}
\end{equation}
with a constant depending on $\bar{\varrho}$.

To treat $\breve{t}_2$, let $K \in \Omega_h$ and $p \in \cP_m(K)$. Recalling from \eqref{104} that  $|\bar{x}_{\!-}|_{W^1_\infty(K)} \lesssim 1$, we have
\begin{align*}
\|{\bf x} \mapsto p(\bar{x}_{\!-}({\bf y}),{\bf y})\|_{H^1(K)} \lesssim |K|^{\frac{1}{2}} \|{\bf x} \mapsto p(\bar{x}_{\!-}({\bf y}),{\bf y})\|_{W^1_\infty(K)} \lesssim |K|^{\frac{1}{2}} \|p\|_{W^1_\infty(K)},
\end{align*}
also with a constant depending on  $\bar{\varrho}$.
Now consider a $p \in \cP_m(K')$ for a $K' \in \Omega_H$. Then the combination of the previous result and the inverse inequality on $K'$ show that
\begin{align*}
\sum_{\{K \in \Omega_h\colon K \subset K'\}} \diam(K)^2 \|{\bf x} \mapsto p(\bar{x}_{\!-}({\bf y}),{\bf y})\|_{H^1(K)}^2
\lesssim \sigma^2 \diam(K')^2 |K'| \|\|p\|^2_{W^1_\infty(K')}\\
\lesssim \sigma^2  |K'|  \|\|p\|^2_{L_\infty(K')}
\lesssim \sigma^2 \|p\|_{L_2(K')}^2,
\end{align*}
dependent on $\varrho$, $\bar{\varrho}$, and $m$.
By applying this to $\breve{t}_2$, we obtain
\begin{equation} \label{106}
\begin{split}
\sum_{K \in \Omega_h} \diam(K)^2 \|\breve{t}_2|_K\|_{H^1(K)}^2
&\lesssim \sigma^2 \Big[\|u\|_{L_2(\Omega)}^2+ \|w\|_{L_2(\Omega)}^2+\sum_{K' \in \Omega_H} \|\partial_{\bb_{K'}} w\|_{L_2(K')}^2\Big]\\
&\lesssim \sigma^2 \|(u,w)\|_\U^2 \lesssim \sigma^2 \|\breve{t}\|_{H({\bf b}_h;\Omega_h)}^2,
\end{split}
\end{equation}
whith a constant depending on $\varrho$, $\bar{\varrho}$, $m$, $\|c\|_{L_\infty(\Omega)}$, and $\|{\bf b}\|_{W^1_\infty(\divv;\Omega)}$, 
where we used \eqref{wb} in the second but last step
as well as the first inequality in \eqref{mainest} in the last step. 

For $\Omega_h \ni K \subset K' \in \Omega_H$, using \eqref{104} and $\|\bb_K-\bb_{K'}\|_{L_\infty(K)} \lesssim \diam(K') |{\bf b}|_{W^1_\infty(K')^n}$,
we estimate
\begin{align*}
&\sum_{\{K \in \Omega_h\colon K \subset K'\}} \diam(K)^2 \|\breve{t}_3|_K\|_{H^1(K)}^2\leq 
\sum_{\{K \in \Omega_h\colon K \subset K'\}} \diam(K)^2 |K| \|\breve{t}_3|_K\|_{W_\infty^1(K)}^2\\ 
& \lesssim \sum_{\{K \in \Omega_h\colon K \subset K'\}} \diam(K)^2 |K| \diam(K')^2 \|w|_{K'}\|_{W_\infty^2(K')}^2\\ 
&\lesssim \sum_{\{K \in \Omega_h\colon K \subset K'\}} \diam(K)^2 |K| \diam(K')^{-2} \|w|_{K'}\|_{L_\infty(K')}^2\\
&\leq \sigma^2|K'| \|w|_{K'}\|_{L_\infty(K')}^2 \lesssim 
\sigma^2 \|w|_{K'}\|_{L_2(K')}^2,
\end{align*}
with a constant depending on $\varrho$, $\bar{\varrho}$, $m$ and $|{\bf b}|_{W^1_\infty(K')^n}$.  Thus, we conclude that 
\begin{equation} \label{107}
\sum_{K \in \Omega_h} \diam(K)^2 \|\breve{t}_3|_K\|_{H^1(K)}^2 \lesssim \sigma^2 \|w\|_{L_2(\Omega)}^2 \lesssim \sigma^2 \|\breve{t}\|_{H({\bf b}_h;\Omega_h)}^2
\end{equation}
using again the first inequality in \eqref{mainest} of this lemma. Combining \eqref{103}, \eqref{106}, and \eqref{107}, completes the proof of the last claim of this lemma. \hfill $\Box$

\section{Some numerical results}
On $\Omega=(0,1)^2$, and for ${\bf b} \in W_\infty^1(\divv;\Omega)$ with $|{\bf b}|^{-1} \in L_\infty(\Omega)$ and ${\bf u} \mapsto {\bf b}\cdot \nabla {\bf u} \in \Lis(H_{0,\Gamma_{\!-}}({\bf b};\Omega),L_2(\Omega))$, we consider the transport problem
$$
\left\{
\begin{array}{r@{}c@{}ll}
{\bf b}\cdot \nabla u &\,\,=\,\,& f &\text{ on } \Omega,\\
u&\,\,=\,\, & 0 &\text{ on } \Gamma_{\!-}.
\end{array}
\right.
$$
We let $\Omega_H$ be a partition of $\Omega$ into uniformly shape regular triangles, and let $\Omega_h$ be the refinement of $\Omega_H$ by applying $\ell$ recursive red-refinements to each $K \in \Omega$ where $\ell \in \N_0$ is a fixed number.
We let
\begin{align*}
\U^H &=\prod_{K \in \Omega_H} \cP_{m-1}(K) \,\,\times \,\,
\Big\{w|_{\partial\Omega}\colon w \in C(\Omega) \cap \prod_{K \in \Omega_H} \cP_{m}(K),\,w=0 \text{ on }\Gamma_{\!-}\Big\},\\
\V^h &=\prod_{K \in \Omega_h} \cP_{m+1}(K).
\end{align*}
We solve $(u^H,\theta^H) \in \U^H$ from
\begin{equation} \label{90}
\begin{split}
b_h(u^H,\theta^H;v^h) := &-\int_\Omega ({\bf b}\cdot\nabla_h v^h +v^h \divv {\bf b})u^H \,d{\bf x}+\int_{\partial\Omega_h}  \llbracket v^h {\bf b} \rrbracket \theta^H \,d{\bf s} \\
=& \, f(v^h)
\qquad(v^h \in  {\mathcal T}^h(\U^H)),
\end{split}
\end{equation}
with ${\mathcal T}^h \in \cL(L_2(\Omega) \times H_{0,\Gamma_{\!-}}({\bf b};\partial\Omega_h),\V^h)$ being defined by
\begin{equation} \label{93}
\langle {\mathcal T}^h (u,\theta),v^h\rangle_{H({\bf b};\Omega_h)} =b_h(u,\theta;v^h) \quad (v^h \in \V^h).
\end{equation}
Note that  for $(u,\theta)$ running over an obvious localized basis for $\U^H$, finding each of the ${\mathcal T}^h (u,\theta)$ amounts to solving a fixed finite dimensional problem on a few mesh cells.
Having determined such a basis for ${\mathcal T}^h(\U^H)$, the solution of \eqref{90} can be found by solving the sparse, symmetric positive definite system
$$
\langle {\mathcal T}^h (u^H,\theta^H),{\mathcal T}^h (\tilde{u}^H,\tilde{\theta}^H)\rangle_{H({\bf b};\Omega_h)} = f({\mathcal T}^h (\tilde{u}^H,\tilde{\theta}^H)) \quad ((\tilde{u}^H,\tilde{\theta}^H) \in U^H).
$$

As shown in Theorem~\ref{th1}, by taking a sufficiently large, but fixed $\ell$,
\begin{equation}
\begin{split} \label{91}
\|u-u^H\|_{L_2(\Omega)}+&\|\theta-\theta^H\|_{H_{0,\Gamma_{\!-}}({\bf b};\partial\Omega_h)}\\
&  \lesssim \inf_{(\bar{u}^H,\bar{\theta}^H) \in \U^H}\big\{
\|u-\bar{u}^H\|_{L_2(\Omega)}+\|\theta-\bar{\theta}^H\|_{H_{0,\Gamma_{\!-}}({\bf b};\partial\Omega_h)}\big\}.
\end{split}
\end{equation}

In all our experiments, we only measure $\|u-u^H\|_{L_2(\Omega)}$, being the quantity of our main interest.
We report on cases where $m=1$, so piecewise constant approximations for $u$, and piecewise linear approximations for $\theta$.
It appears that in all these cases it is   sufficient to take $\ell=0$, i.e., $\Omega_h=\Omega_H$.
Increasing $\ell$ leaves the numerical solutions essentially unchanged. This holds for true for $m=1$, as well as in experiments that we performed where $m>1$.

In our first experiment, we take constant ${\bf b}=(b_1,b_2)^\top \in \R_{>0}\times \R_{\geq 0}$, $\Omega_H$ being a  uniform partition of $\Omega$  into isosceles right angled triangles with legs of length $H\in 2^{-\N_0}$ and hypothenuses parallel
to the vector $(1,1)$, and $\Omega_h=\Omega_H$ so $\ell=0$. 
We take $f({\bf x})=1-x_1$ so that the exact solution, given by
$$
u({\bf x})=\left\{
\begin{array}{ll}
 \frac{x_1}{b_1}-\frac{x_1^2}{2 b_1}, &  -b_2 x_1+b_1 x_2\ \geq  \ 0,  \\
  \frac{x_2}{b_2}-\frac{x_2 (2 b_2 x_1-b_1 x_2)}{2 b_2^2},&  -b_2 x_1+b_1 x_2\ <  \ 0, 
 \end{array} \right.
$$
is continuous, piecewise quadratic, whose normal derivative over the line $ {\bf x} \cdot {\bf b}^\perp=0$   has a jump.
The numerical results for various ${\bf b}$, illustrated in Figure~\ref{error1} for ${\bf b}=(1,1)^\top$ and ${\bf b}=(1,1/16)^\top$,
\begin{figure}[h]
\includegraphics[width=6cm]{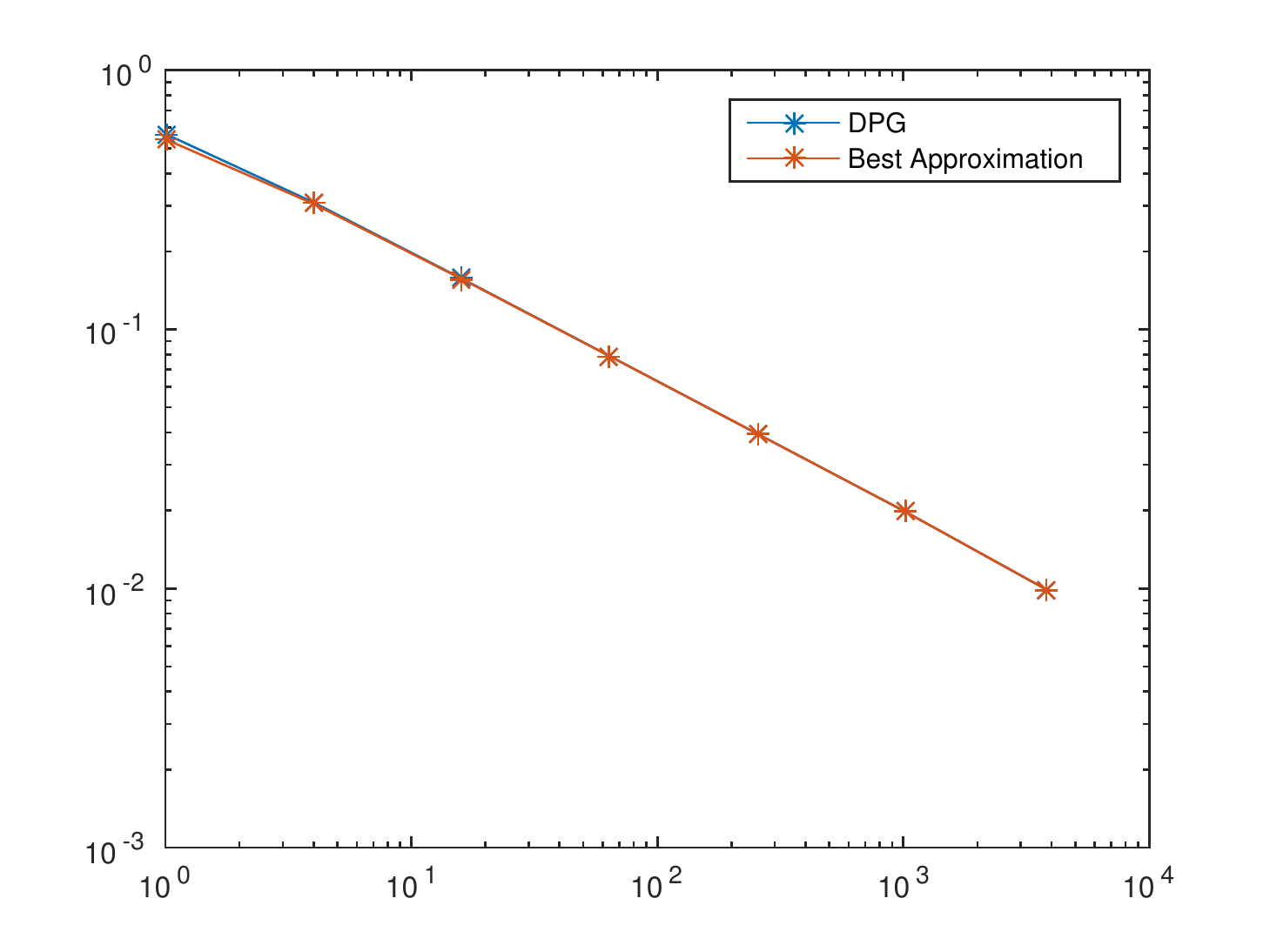}
\includegraphics[width=6cm]{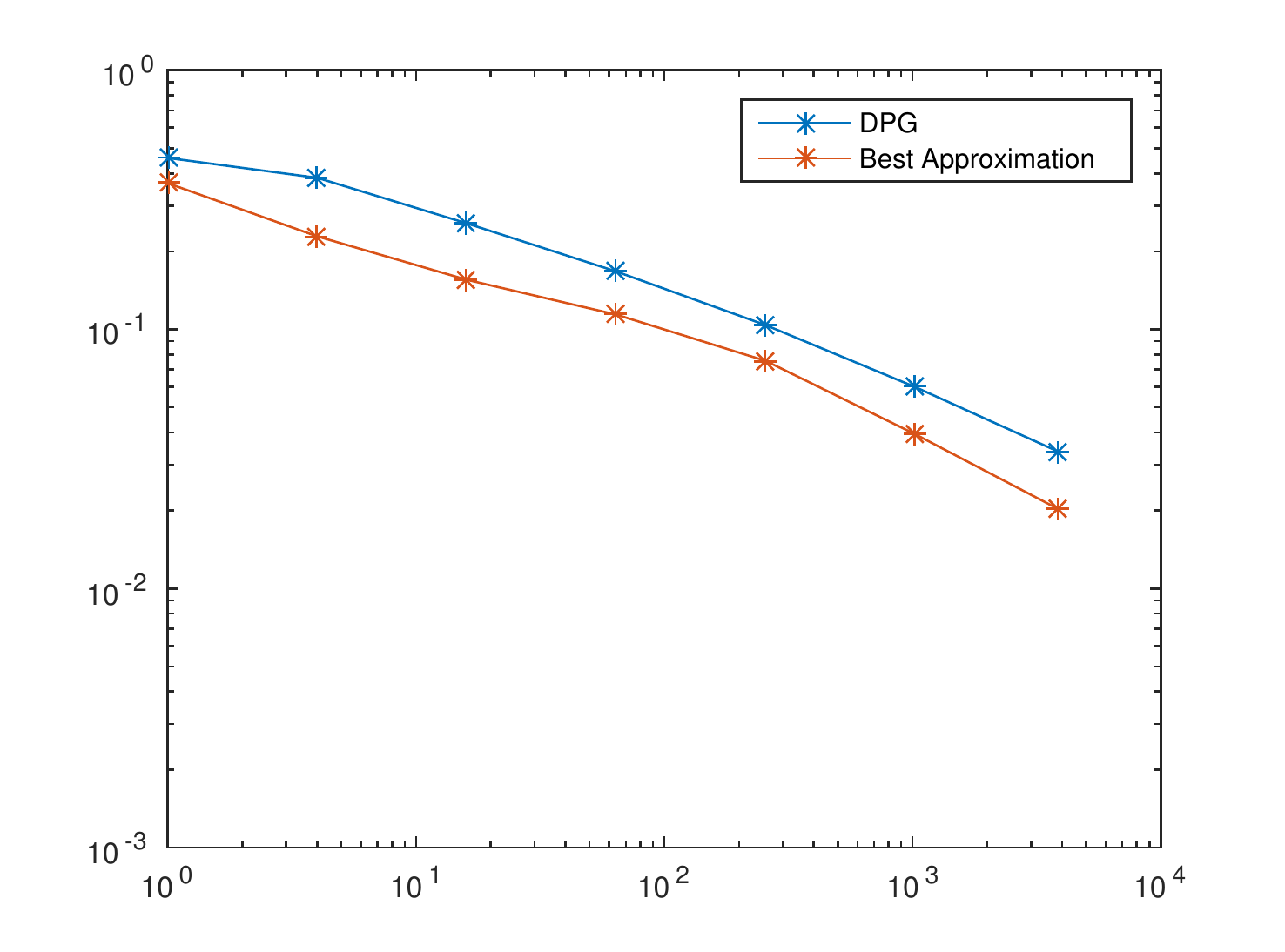}
\caption{$L_2(\Omega)$-error in $u_H$ and that in the best approximation versus $1/h^2$, for $f({\bf x})=1-x_1$, ${\bf b}=(1,1)^\top$ (left) and ${\bf b}=(1,\frac{1}{16})^\top$ (right).}
\label{error1}
\end{figure}
show that $\|u-u^H\|_{L_2(\Omega)}$ is close to the error of best approximation from the space of piecewise constants.

In our second experiment, we change $f$ into
\begin{equation} \label{92}
f({\bf x})=\left\{
\begin{array}{ll}
 1-x_1, &  -b_2 x_1+b_1 x_2\ \geq  \ \frac{1}{4},  \\
 0,&  -b_2 x_1+b_1 x_2\ <  \ \frac{1}{4}, 
 \end{array} \right.
\end{equation}
so that the solution, given by
$$
u(x_1,x_2)=\left\{
\begin{array}{lr}
 \frac{x_1}{b_1}-\frac{x_1^2}{2 b_1}, & -b_2 x_1+b_1 x_2\ \geq  \ \frac{1}{4},  \\
 0,&  -b_2 x_1+b_1 x_2\ <  \ \frac{1}{4},
 \end{array}
 \right.
 $$
 is piecewise quadratic with a discontinuity over the line  $ {\bf x} \cdot {\bf b}^\perp=\frac{1}{4}$.
 
 When $h=2^{-k}$ for $k \geq 2$ and ${\bf b} \in \{(1,0)^\top,(1,1)^\top\}$, then this discontinuity is over a grid line, and the right-hand side of \eqref{91} will be strongly dominated by the approximation error in $\theta$, because the approximation error in $u$ benefits from the discontinuous approximation. In this, rather special situation, the error $\|u-u_H\|_{L_2(\Omega)}$ might therefore be much larger than the error of best approximation in $u$. Unfortunately, this is indeed what happens as illustrated in Figure~\ref{error2}.
 \begin{figure}[h]
\includegraphics[width=6cm]{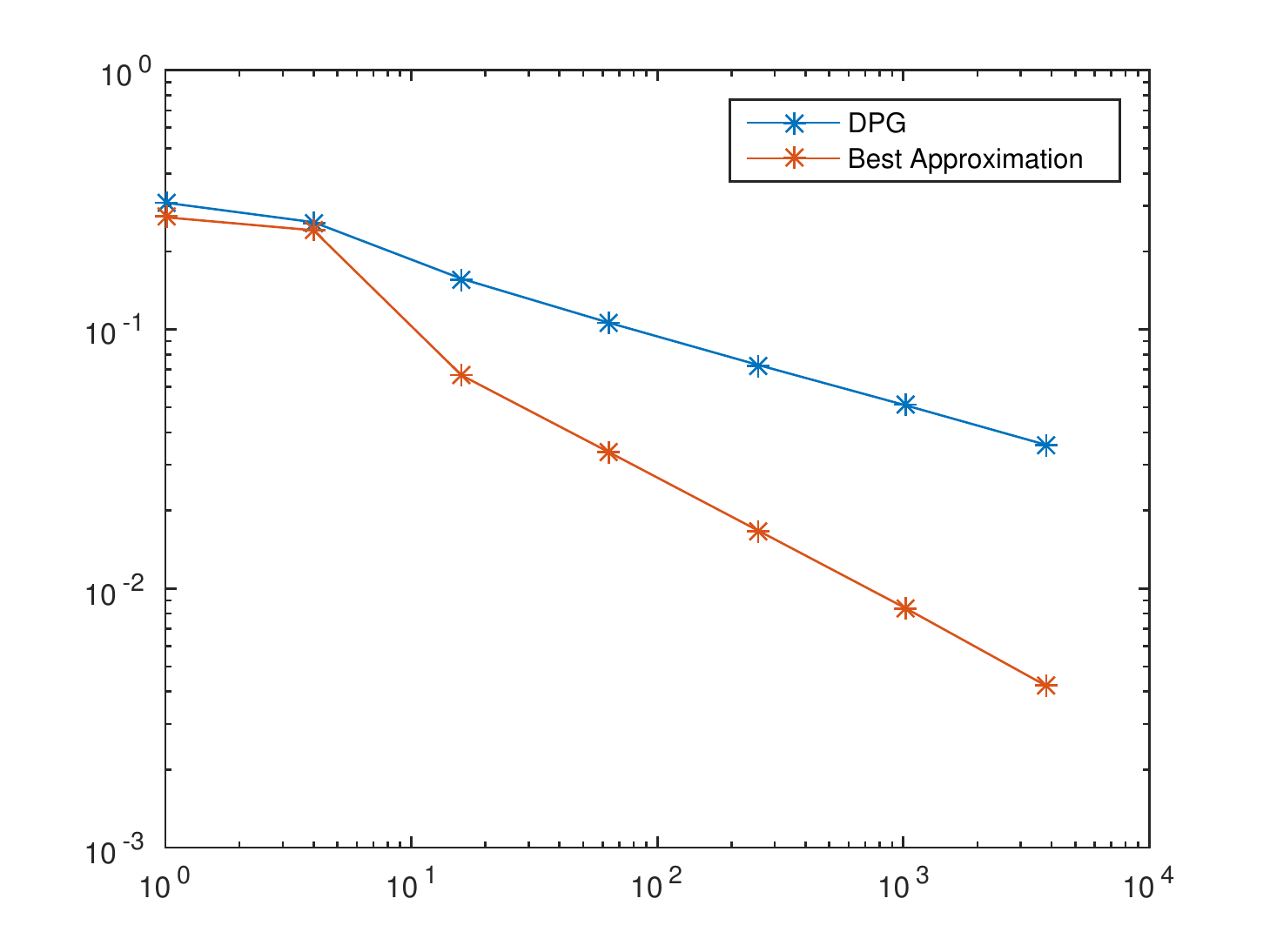}
\includegraphics[width=6cm]{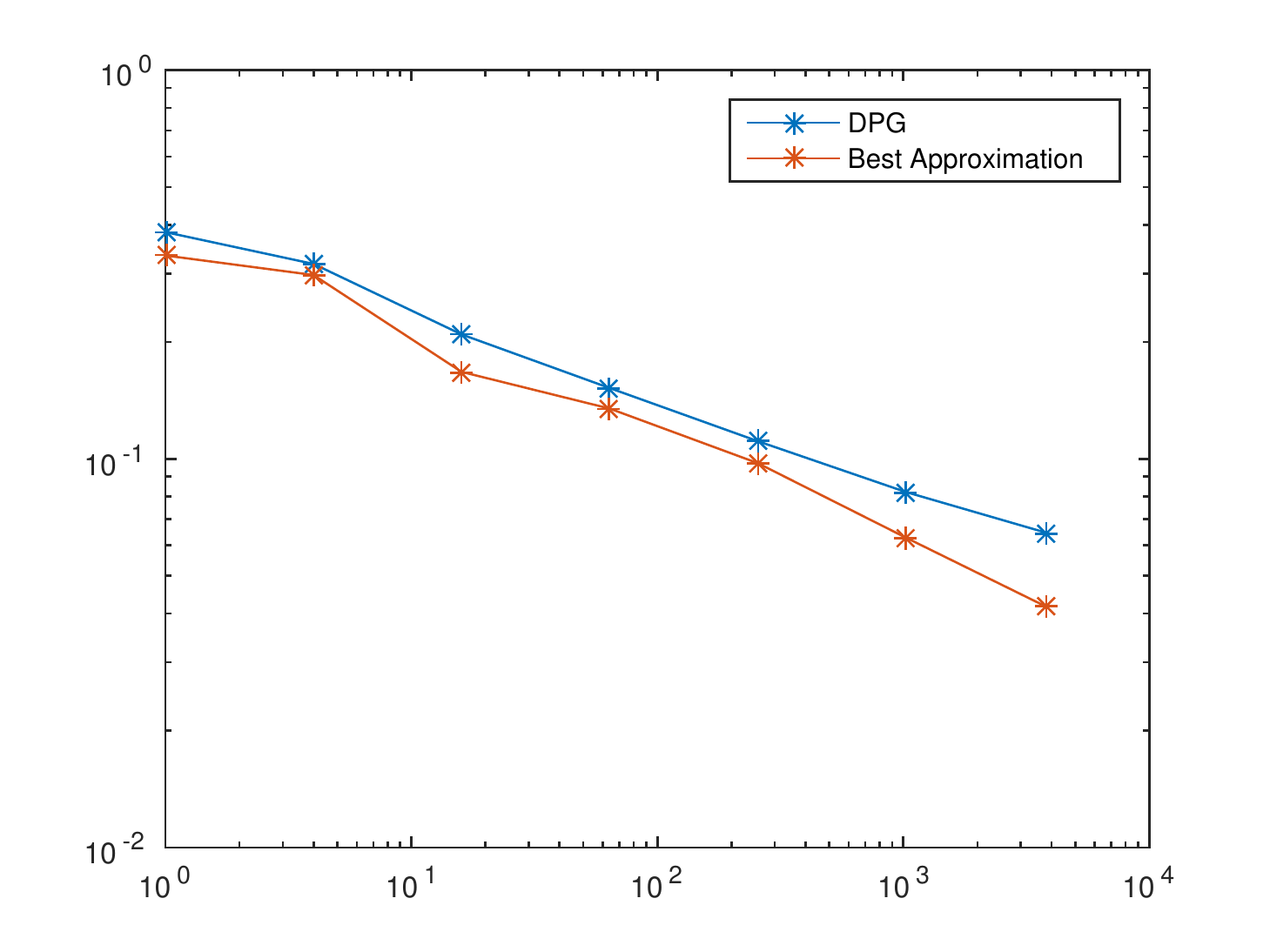}
\caption{$L_2(\Omega)$-error in $u_H$ and that in the best approximation versus $1/h^2$, for the discontinuous $f$ from \eqref{92}, ${\bf b}=(1,1)^\top$ (left) and ${\bf b}=(1,\frac{1}{16})^\top$ (right).}
\label{error2}
\end{figure}

To deal with this difficulty, we replaced the trial space for $\theta$ by the space of discontinuous polynomials $\prod_e \cP_1(e)$ with $e$ running over all edges of the mesh skeleton without the inflow edges, and determined the new test space ${\mathcal T}^h(\U^H)$ from \eqref{93} again with $\V^h=\prod_{K \in \Omega_h} \cP_{2}(K)$.
 With this modification, the curve of the $L_2(\Omega)$-error in the resulting $u_H$ is indistinguishable from that of the error in the best approximation.
Since $\prod_e \cP_1(e) \not\subset H_{0,\Gamma_{\!-}}({\bf b};\Omega)$ we are now dealing with a {\em nonconforming} DPG method. 

\begin{remark}
 This discontinuous trial space was already considered in the first paper \cite{64.14} in which such DPG discretizations (there called DPG-A) for the transport problem were considered.
 Since instead of $H_{0,\Gamma_{\!-}}({\bf b};\partial\Omega_h)$, there the space $L_2(\partial\Omega_h)$ was considered as the space for the traces $\theta$, the use of a nonconforming trial space was unintended.
 In \cite{138.22} one can find an analysis of a nonconforming DPG discretization for the Poisson problem.
 The analysis of the above nonconforming DPG discretization of the transport problem is  open.
 \end{remark}

In our third experiment we took ${\bf b}({\bf x})=(x_2,-x_1)^\top$, $f = 0$, and the inhomogeneous boundary condition $u=g$ on $\Gamma_{\!-}$, where $g(x_1,1)= 0$, and 
$g(0,x_2) =\left\{
\begin{array}{l}
 1, \  x_2 \ \geq \ \frac{1}{4},\\
 0, \  x_2 \ < \ \frac{1}{4} .
 \end{array} \right.
$
To implement this inhomogeneous boundary condition, following the second approach discussed in Remark~\ref{rem1} we solved $(u^H,\theta^H) \in \U^H$ from
$$
b_h(u^H,\theta^H;v^h) = -\int_{\partial\Omega_h} \llbracket v^h {\bf b} \rrbracket \bar{g} \,d{\bf s} \qquad(v^h \in  {\mathcal T}^h(\U^H)),
$$
with $\bar{g} \in H({\bf b};\Omega)$ being an extension of $g$. We took $\bar{g}({\bf x})=\left\{\begin{array}{l}
1, \  |{\bf x}| \ \in  \ [\frac{1}{4},1],\\ 
0, \  \text{elsewhere,}  \end{array} \right.
$
which in this case equals the exact solution.

In this experiment, we employed an adaptive refinement strategy, that we implemented using the package iFEM by L. Chen (\cite{ifem}).
By an application of Theorem~\ref{th4} and Riesz' representation theorem, $r \in H({\bf b};\Omega_h)$, defined by
$$
\langle r,v\rangle_{H({\bf b};\Omega_h)}=\int_{\partial\Omega_h} \llbracket v {\bf b} \rrbracket \bar{g} \,d{\bf s}-b_h(u^H,\theta^H;v) \quad(v \in H({\bf b};\Omega_h)),
$$
satisfies $\|r\|^2_{H({\bf b};\Omega_h)} \eqsim \|u-u^H\|_{L_2(\Omega)}^2+\|\theta-\theta^H\|^2_{H_{0,\Gamma_{\!-}}({\bf b};\partial\Omega_h)}$.
We approximated $r$ by the solution $\tilde{r} \in \V^h$  of 
$$
\langle \tilde{r},v^h\rangle_{H({\bf b};\Omega_h)}=\int_{\partial\Omega_h} \llbracket v^h {\bf b} \rrbracket \bar{g} \,d{\bf s}-b_h(u^H,\theta^H;v^h) \quad(v^h \in \V^h).
$$
Based on the decomposition $\|\tilde{r}\|^2_{H({\bf b};\Omega_h)}=\sum_{K \in \Omega^H} \|\tilde{r}\|^2_{H({\bf b};K)}$, as local error indicators we used $\{\|\tilde{r}\|^2_{H({\bf b};K)}\colon K \in \Omega^H\}$  to drive the common adaptive finite element method (AFEM) with D\"{or}fler marking parameter $\vartheta=\frac{1}{2}$.
Examples of a resulting mesh and approximate solution are given in Figure~\ref{adapt1}, and the $L_2(\Omega)$-errors vs. the number of unknowns are illustrated in Figure~\ref{error3}.
\begin{figure}[h]
\includegraphics[width=6cm]{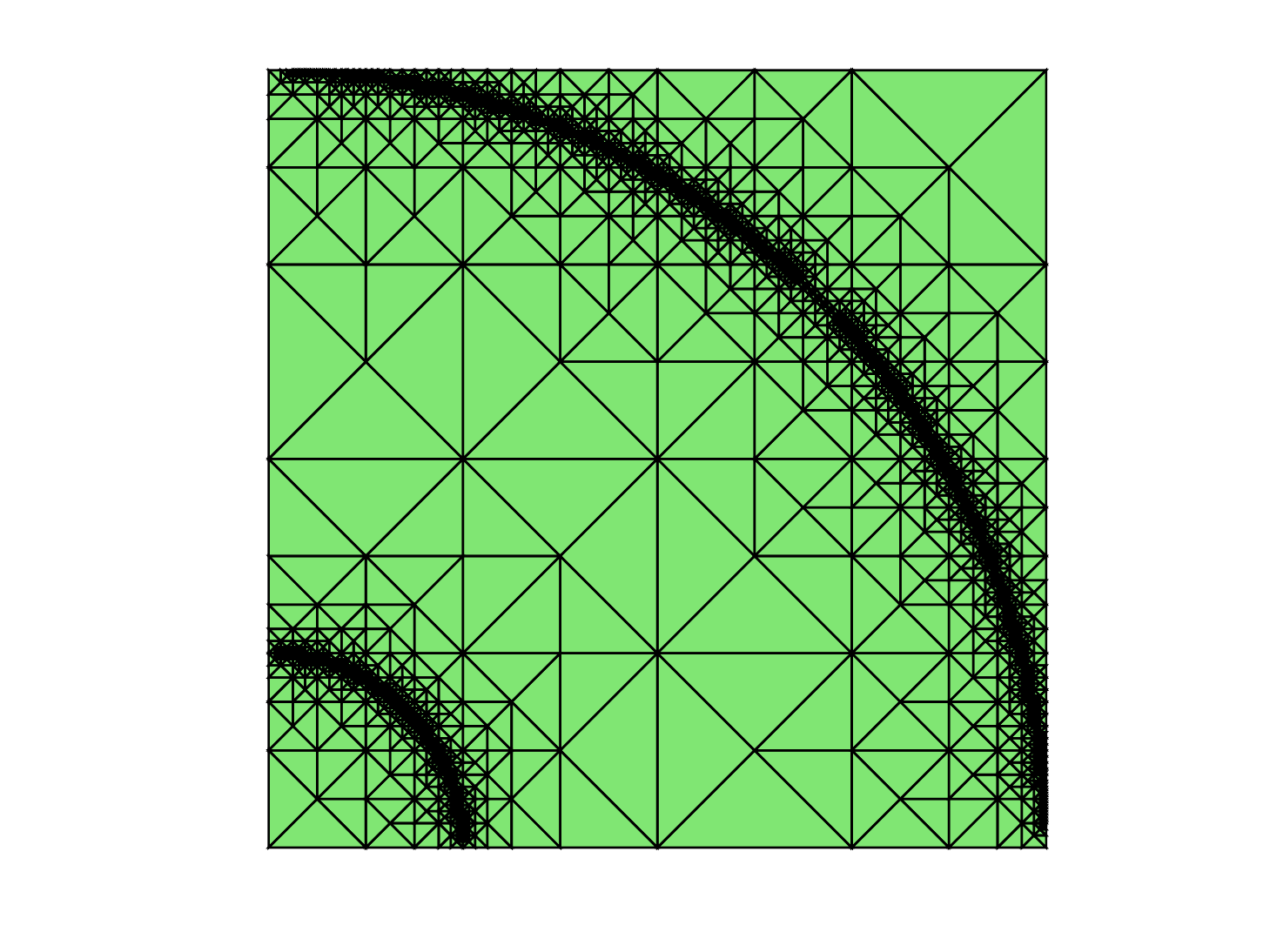}
 \includegraphics[width=6cm]{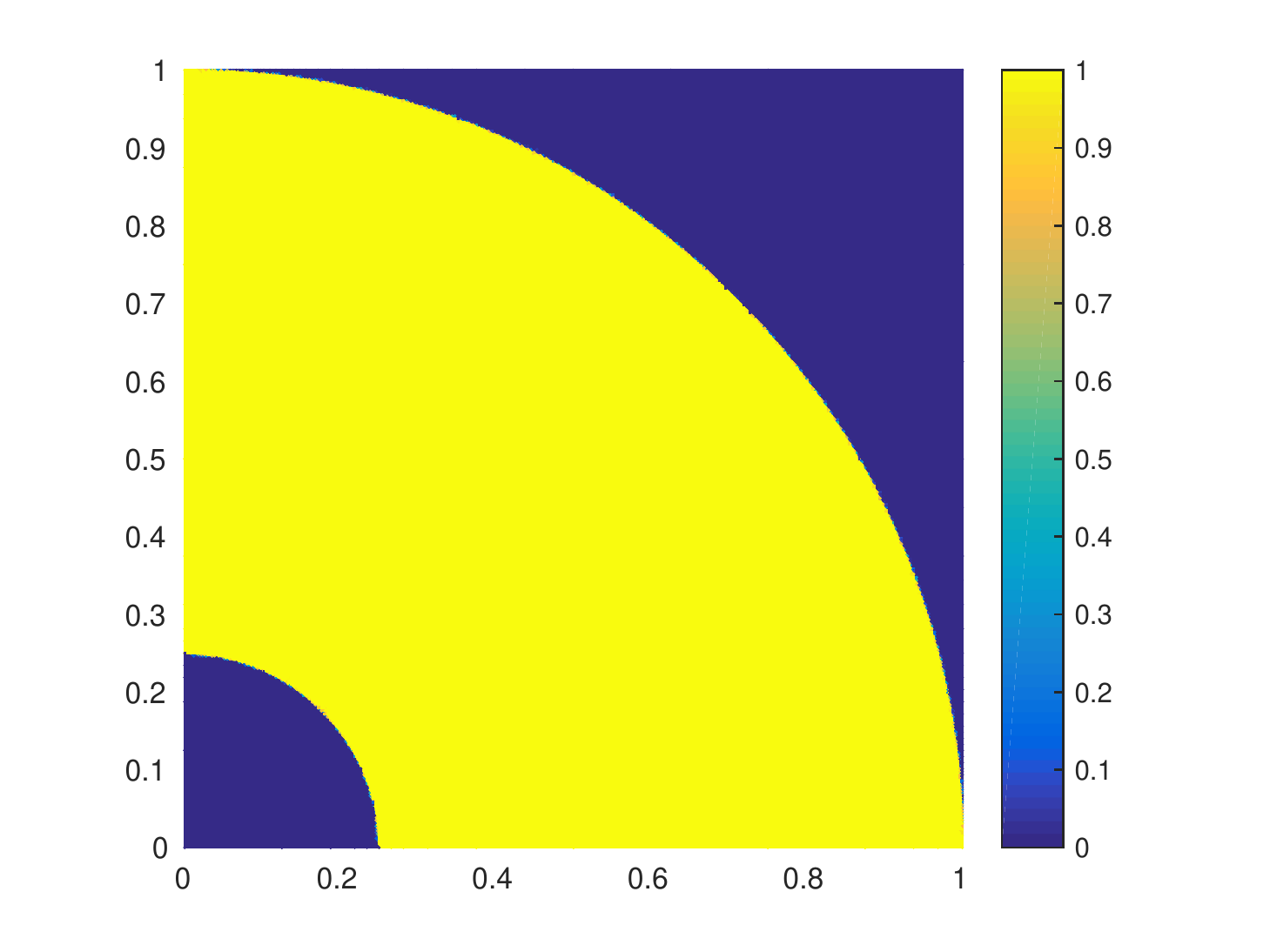}
\caption{Mesh generated after some iterations (left) and the approximate solution (right) for the third experiment.}
\label{adapt1}
\end{figure}
\begin{figure}[h]
\includegraphics[width=6cm]{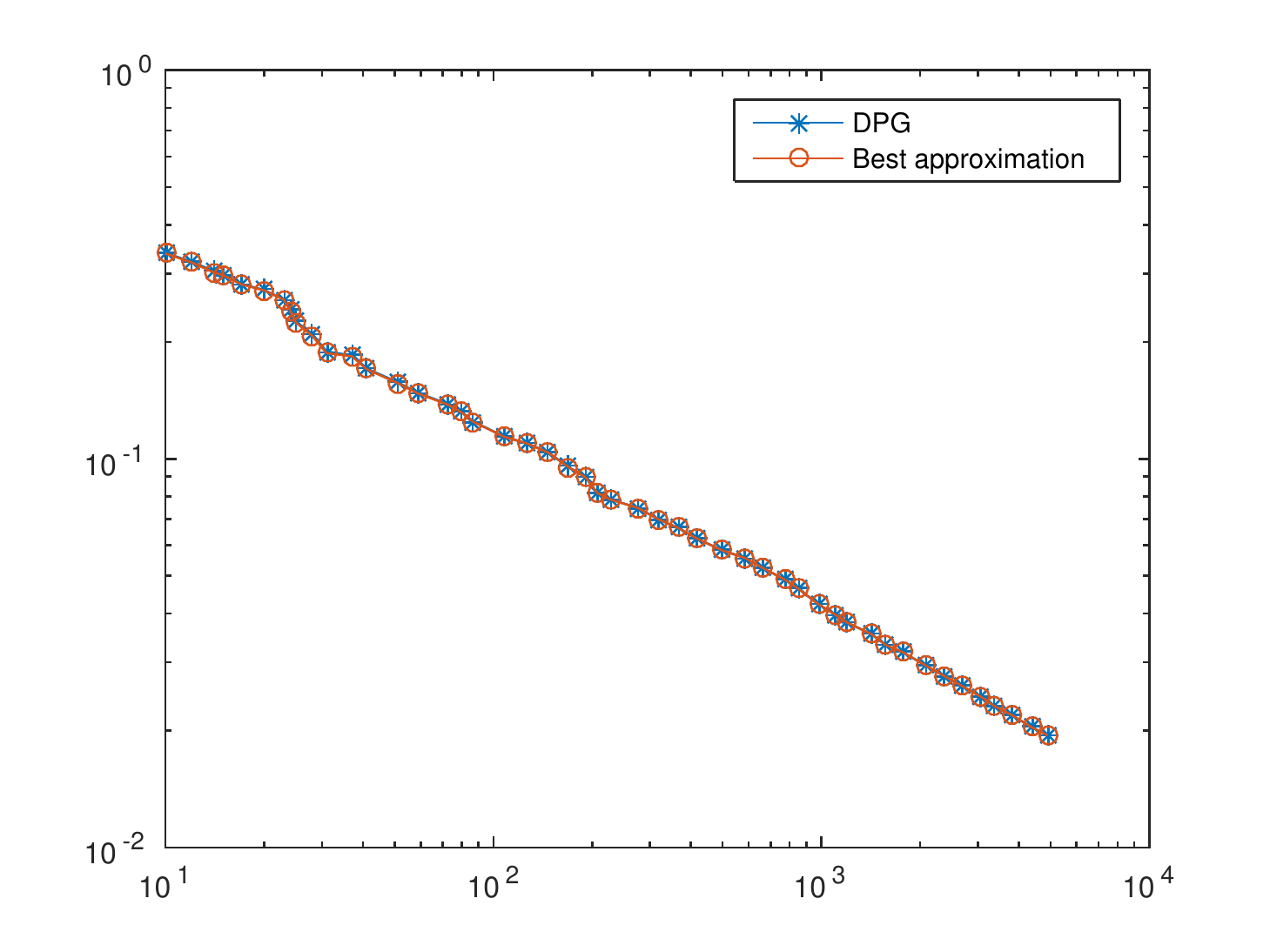}
\caption{$L_2(\Omega)$-error in $u_H$ and that in the best approximation versus the number of triangles in the mesh for the third experiment.}
\label{error3}
\end{figure}

\section{Conclusion}

For a family of uniformly shape regular partitions $\Omega_H, H\in \mathcal{I}$ and given piecewise polynomial trial spaces
on $\Omega_H$ and on the skeleton $\partial\Omega_H$, we have constructed  {\em uniformly inf-sup stable} (with respect to $H\in \mathcal{I}$)
DPG discretizations for linear transport equations with variable convection fields by
associating with each cell $K'\in \Omega_H$ a {\em piecewise polynomial }test space $\V_{K'}$ on a subgrid $\Omega_h|_{K'}$
with the following properties. 
The polynomial degree of each  $\V_{K'}$ exceeds the degree of the trial functions by one and
the refinement depth of each subgrid $\Omega_h|_{K'}$ is uniformly bounded. The stability implies that the DPG scheme provides near-best approximations from the trial space as well as uniform error-residual relations that form essential prerequisits for a posteriori error control and adaptive refinement strategies, see \eqref{GenCea}, \eqref{error-res}. The control of the polynomial degrees in the test space as well as the subgrid  refinement depth entail 
an asymptotically optimal complexity scaling since the size of the linear systems stays proportional to the dimensions of the trial spaces.
To our knowledge this is the first instance of a DPG stability result with the desired scaling properties except for \cite{75.61} for the elliptic case.
However, while the actual dimension of the local test spaces in \cite{75.61} could be made  concrete, the specification 
of the actual subgrid
refinement depth required in Theorem \ref{th1} would require knowledge of or good estimates for the various unspecified
constants entering the analysis. The strategy for proving Theorem \ref{th1} is necessrily entirely different from the elliptic case
and the analysis   indicates that  realizing a uniform inf-sup stability 
while keeping the dimensions of the local test spaces uniformly bounded, is not for granted when dealing with non-elliptic problems.
Several consequences of the findings in the present paper such as rigorous computable a posteriori error bounds or applications 
in more complex problem settings such as kinetic models will be addressed in forthcoming work.

\def\cprime{$'$}

\end{document}